\newtheorem{theorem}{Theorem}[section]
\newtheorem{corollary}{Corollary}[section]
\newtheorem{lemma}{Lemma}[section]
\newtheorem{proposition}{Proposition}[section]
\numberwithin{equation}{section}
\newtheorem{definition}{Definition}[section]
\newtheorem{remark}{Remark}[section]
\font\QEDlogofont=msam10 at 10pt
\def\QEDblogo{\hbox{\QEDlogofont\char'004}}
\newif\ifnologo\nologofalse
\newif\iflogo
\newif\ifblogo\blogofalse
\newif\iftopprhead\topprheadfalse
\def\prooffont{\normalsize}
\newenvironment{proof}{\par\addvspace{6pt plus2pt}
\par
\noindent\prooffont{\bf\em Proof:}\hskip6pt\ignorespaces}{%
   \ifblogo\hskip1.2pt
            \blacksquare
   \else
   \ifnologo
   \else
   \hfill
            \QEDblogo
   \fi\fi
\par\addvspace{6pt plus2pt}\global\topprheadfalse}%
\newcommand\blfootnote[1]{%
  \begingroup
  \renewcommand\thefootnote{}\footnote{#1}%
  \addtocounter{footnote}{-1}%
  \endgroup
}
\begin{document}

\begin{center}

\begin{large}
{\bf Generalized Extended Riemann-Liouville type fractional derivative operator}
\end{large}
\vspace{10pt}

{\bf H. Abbas $^{\rm a}$,  A. Azzouz $^{\rm b,}$\footnote{ Corresponding Author},   M. B. Zahaf $^{\rm
c}$ and M. Belmekki $^{\rm
d}$}\vspace{6pt}
\\
\vspace{6pt} $^{\rm a}${\em Department of Mathematic, Faculty of Sciences. \break University Dr Tahar Moulay. P. Box 138 Nasr. Saida. Algeria 20000.
}\\
\vspace{6pt} $^{\rm b}${\em Department of Common studies, Faculty of Technology. \break University Dr Tahar Moulay. P. Box 138 Nasr. Saida. Algeria 20000.
}\\
 \vspace{6pt} $^{\rm c}${\em Laboratoire d'Analyse Non Lin\'eaire et Math\'ematiques Appliqu\'ees,\break
Universit\'e de Tlemcen, BP 119,  13000-Tlemcen, Algeria .
}\\
\vspace{6pt} $^{\rm d}${\em High School of Applied Sciences, \break P. Box 165 RP. Bel Horizon, 13000-Tlemcen, Algeria.}\\

\blfootnote{E-mail adresses: a.hafida@yahoo.fr (H.Abbas),  abdelhalim.azzouz.cus@gmail.com (A.Azzouz), m\_b\_zahaf@yahoo.fr (M.B.Zahaf), m.belmekki@yahoo.fr (M. Belmekki)}
\end{center}
\vspace{1cm}

\begin{abstract}
In this paper, we aim to present  new extensions of incomplete gamma, beta, Gauss hypergeometric, confluent
hypergeometric function and Appell-Lauricella hypergeometric functions, by using the extended Bessel function due to Boudjelkha \cite{Boudjelkha}. Some recurrence relations, transformation formulas, Mellin transform and integral representations are obtained for these generalizations. Further, an extension of the Riemann-Liouville fractional derivative operator is established.
\end{abstract}

\textbf{Keywords}
 Generalized extended incomplete gamma function,  generalized extended beta function, extended Riemann- Liouville fractional derivative, Mellin transform, extended Gauss hypergeometric function, integral representation.

\textbf{MSC subject}:  26A33, 33B15, 33B20, 33C20, 33C65.

\section{Introduction}
In recent years, incomplete gamma functions have been used in many  problems in applied mathematics, statistics, engineering and many other fields including physics and biology. Most generally, special functions became powerful tools to treat all these areas. Classical gamma and Euler's beta functions are defined by
\begin{eqnarray}
\gamma(\alpha, x)&=&\int_{0}^{x}t^{\alpha-1}e^{-t}dt,\quad( \Re(\alpha)>0),\label{chaud1}\\
\Gamma(\alpha, x)&=&\int_{x}^{\infty}t^{\alpha-1}e^{-t}dt,\label{chaud2}\\
B(x,y)&=&\int_{0}^{1}t^{x-1}(1-t)^{y-1}dt, \quad (\Re(x)>0,\; \Re(y)>0).\label{chaud3}
\end{eqnarray}
 Using an exponential regulazing term, Chaudhry et al.\cite{Chaudhry} extended the incomplete gamma function as follows
\begin{eqnarray}
\gamma(\alpha, x; p)&=&\int_{0}^{x}t^{\alpha-1}e^{-t-\frac{p}{t}}dt,\quad ( \Re(p)>0;\;p=0,\,  \Re(\alpha)>0),\label{chaud4}\\
\Gamma(\alpha, x; p)&=&\int_{x}^{\infty}t^{\alpha-1}e^{-t-\frac{p}{t}}dt.\label{chaud5}
\end{eqnarray}
 They proved the following recurrence formula
\begin{equation*}
    \gamma(\alpha, x; p)+\Gamma(\alpha, x; p)=2p^{\alpha /2}K_{\alpha}(2\sqrt{p}),\quad ( \Re(p)>0),
\end{equation*}
where $K_{\alpha}(z)$ is the Macdonald function, known also as modified bessel function of the third kind, defined for any $Re(z)>0$ by
\begin{equation*}
    K_{\alpha}(z)=\frac{(z/2)^{\alpha}}{2}\int_{0}^{\infty}t^{-\alpha-1}e^{-t-z^2/4t}dt.
\end{equation*}

A first extension of Euler's beta function is given by Chaudhry et al. \cite{Chaudqadir} as follows
\begin{eqnarray}
B(x,y,p)&=&\int_{0}^{1}t^{x-1}(1-t)^{y-1}e^{\frac{-p}{t(1-t)}}dt,\;(\Re(p)>0;\;p=0,\;\Re(x)>0,\; \Re(y)>0). \label{chaudqadir}
\end{eqnarray}

These extensions are useful and provide new connections with error and Whittaker functions. For $p=0$, \eqref{chaud4}, \eqref{chaud5} and \eqref{chaudqadir}  will be reduced to known incomplete gamma and beta functions \eqref{chaud1}, \eqref{chaud2} and \eqref{chaud3}, respectively. Instead of using the exponential function, Chaudhry and Zubair \cite{Chaudhry2} proposed a generalized extension of \eqref{chaud4}, \eqref{chaud5} in the following form
 \begin{eqnarray}
\gamma_\mu(\alpha, x; p)&=&\sqrt{\frac{2p}{\pi}}\int_{0}^{x}t^{\alpha-\frac{3}{2}}e^{-t}K_{\mu+\frac{1}{2}}\left(\frac{p}{t}\right)dt,\label{chaudzubair1}\\
\Gamma_\mu(\alpha, x; p)&=&\sqrt{\frac{2p}{\pi}}\int_{x}^{\infty}t^{\alpha-\frac{3}{2}}e^{-t}K_{\mu+\frac{1}{2}}\left(\frac{p}{t}\right)dt,\;(\Re(x)>0,\;\Re(p)>0,\;-\infty<\alpha<\infty).
\label{chaudzubair2}
\end{eqnarray}

 Nowadays, many authors are developing new extensions of Euler's gamma, beta and hypergeometric functions based on the paper of Chaudhry and Zubair \cite{Chaudhry2} by considering exponential and certain modified special functions (see for more details \cite{Harris,goswami,Luo,ozregin2010, ozregin2011}). Very recently,  Agarwal et al. \cite{agarwal} developed an extension of the Euler's beta function as follows
 \begin{equation}\label{agarwal}
B_\mu(x, y; p;m)=\sqrt{\frac{2p}{\pi}}\int_{0}^{1}t^{x-\frac{3}{2}}(1-t)^{y-\frac{3}{2}}K_{\mu+\frac{1}{2}}\left(\frac{p}{t^m(1-t)^m}\right)dt,
\end{equation}
where $x, y \in\mathbb{C}$, $m > 0$ and $\Re(p) > 0$.

 In the present paper, we introduce  new generalized incomplete gamma and Euler's beta functions by substituting in \eqref{chaudzubair1}, \eqref{chaudzubair2} and \eqref{agarwal} the Macdonald function $K_{\alpha}(z)$ by it's extended one developed by Boudjelkha \cite{Boudjelkha}, namely
\begin{equation}\label{boujlkha}
R_K(z,\alpha,q,\lambda)=\frac{(z/2)^{\alpha}}{2}\int_{0}^{\infty}t^{-\alpha-1}\frac{e^{-qt-z^2/4t}}{1-\lambda e^{-t}}dt,
\end{equation}
where $|\arg z^2|<\pi/2,$ $0<q\leq 1$ and  $-1\leq \lambda\leq 1$.\\
Clearly, when $\lambda = 0$ and $q=1$, $R_K(z,\alpha,q,\lambda)$ is reduced to  $K_{\alpha} (z)$. Moreover, Boudjelkha proved that the $R_K(z,-\alpha,q,\lambda)$ function can be expanded in terms of $K_{\alpha} (z)$ as follows
\begin{equation}
R_K(z,-\alpha,q,\lambda)=\sum_{n=0}^{\infty}\lambda^n\frac{K_\alpha(z\sqrt{q+n})}{(q+n)^{\alpha/2}},\;\Re(z^2)>0,\;0<q\leq1,\;-1\leq \lambda\leq 1,
\end{equation}
and showed that the  behavior of the function $R_K(z,-\alpha,q,\lambda)$ for small values of $z$ is described by the asymptotic formulas:
\begin{eqnarray}
R_K(z,-\alpha,q,\lambda)\sim \left\{
\begin{array}{ll}
\frac{1}{2}\frac{\Gamma(-z)}{(z/2)^{-\alpha}}(1-\lambda)^{-1},\;z\to 0,-1<\lambda<1,&\Re(\alpha)<0,\\
\frac{1}{2}\frac{\Gamma(z)}{(z/2)^{\alpha}}\Phi(\lambda,\alpha,q),\; z\to 0,\; -1\leq\lambda\leq 1,&\Re(\alpha)>1,
\end{array}
\right.
\end{eqnarray}
where $\Phi(\lambda,\alpha,q)$ stands for the Lerch function. As for the asymptotic behavior of this function, when $z\to \infty,$ it is given by
\begin{equation}
R_K(z,-\alpha,q,\lambda)\sim\sqrt{\frac{\pi}{2z}}\frac{e^{-z\sqrt{q}}}{q^{\alpha/2+1/4}},\;\text{as }z\to \infty,\;|\arg z|<\frac{\pi}{4}, \; -1\leq\lambda\leq 1.
\end{equation}
In particular, when $q = 1$, we have
\begin{equation}
R_K(z,-\alpha,1,\lambda)\sim\sqrt{\frac{\pi}{2z}}{e^{-z}},\;\text{as }z\to \infty,\;|\arg z|<\frac{\pi}{4},
\end{equation}
which is the same asymptotic formula as that of  $K_{\alpha}$.

Further, by using the generalized extended beta function we get other extensions of  Gauss hypergeometric, confluent hypergeometric, Appell and Lauricella hypergeometric functions and we investigate some of their properties.\\

Recently, fractional derivative operators become significant research topics due to their wide applications in various areas including mathematical, physical, life sciences and engineering problems. To cite only a few of this operator's applications, we refer  to \cite{kilbas}, \cite{sun} and the references therein. The use of fractional derivative operators in obtaining generating relations for some special functions can be found in \cite{ozregin2010,Sriva}.
There are two important fractional derivatives operators: Riemann-Liouville and
Caputo operators.  Undoubtedly, the difference between them is very important for applications to differential equations because of required initial conditions which are of different types (see for further details \cite{Li} and \cite{zhou}). It is worth being pointed out that nowadays a great attention is devoted to develop extensions of fractional differential operators, readers may refer to \cite{agarwal,baleanu,Bohner,kiymaz,kiymaz1,ozregin2010,ozregin2011,parmar,rahman2018,Veling}. Making use of the $R_K$ function and
 inspired by the work of Agarwal et al. \cite{agarwal}, we introduce new generalized incomplete Riemann-Liouville fractional integral operators, and we obtain some generating relations involving generalized extended Gauss hypergeometric function.\\
 \par The paper is organized as follows: In section 2, we introduce the  generalized extended incomplete Gamma and Euler's beta functions, some of their properties are investigated. Section 3 is devoted to introduce extended hypergeometric and confulent hypergeometric functions by the extended Euler's beta function given in section 2, their related properties are established. The extended Appell and Lauricella hypergeometric function are given in section 4. In section 5, we give another result which consits to introduce the generalized extended Riemann Liouville fractional derivative operator and establish  most important properties such Mellin transform among others. Finally, in the last section, we obtain linear and bilinear generating relations for the generalized extended hypergeometric functions.

\section{The generalized  extended incomplete Gamma and Euler's beta functions}
In this section, we define new extended incomplete Gamma and Euler's beta functions based on the extension of  Bessel function \eqref{boujlkha} and we give some properties.
\subsection{The generalized extended incomplete Gamma function}
\begin{definition}
 The  generalized extended incomplete gamma functions  are given by
 \begin{eqnarray}
\gamma_\mu(\alpha, x;q;\lambda ; p)&=&\sqrt{\frac{2p}{\pi}}\int_{0}^{x}t^{\alpha-\frac{3}{2}}e^{-t}R_K\left(\frac{p}{t},-\mu-\frac{1}{2},q,\lambda\right)dt\label{gGamma1}\\
\Gamma_\mu(\alpha, x;q;\lambda ; p)&=&\sqrt{\frac{2p}{\pi}}\int_{x}^{\infty}t^{\alpha-\frac{3}{2}}e^{-t}R_K\left(\frac{p}{t},-\mu-\frac{1}{2},q,\lambda\right)dt\label{gGamma}
\end{eqnarray}
  where $\Re(x)>0$,  $0<q\leq 1$,  $-1\leq \lambda\leq 1$ and $\Re(p) > 0$.
\end{definition}
\begin{remark}
When $\lambda = 0$ and $q=1$, \eqref{gGamma1} and \eqref{gGamma} are respectively reduced to the extended incomplete gamma functions \eqref{chaudzubair1} and \eqref{chaudzubair2} defined by Chaudhry and Zubair \cite{Chaudhry1, Chaudhry2}.

\end{remark}

\begin{proposition}[Decomposition theorem]
\begin{eqnarray}\label{eq3}
\Gamma_\mu(\alpha, x;q;\lambda ; p)+\gamma_\mu(\alpha, x;q;\lambda ; p)&=&\frac{\Gamma(\alpha+\mu)}{\sqrt{\pi}}\left(\frac{p}{2}\right)^{-\mu}\Phi_{1-\frac{\alpha+\mu}{2},\frac{1}{2}-\frac{\alpha+\mu}{2}}\left(\lambda,\mu+\frac{1}{2},q,\frac{p^2}{16}\right)\nonumber\\
&&+\frac{\Gamma\left(-\frac{\alpha+\mu}{2}\right)}{2\sqrt{\pi}}\left(\frac{p}{2}\right)^{\alpha}\Phi_{\frac{1}{2},\frac{\alpha+\mu+2}{2}}\left(\lambda,\frac{\mu-\alpha+1}{2},q,\frac{p^2}{16}\right)\nonumber\\
&&-\frac{\Gamma\left(-\frac{\alpha+\mu+1}{2}\right)}{2\sqrt{\pi}}\left(\frac{p}{2}\right)^{\alpha+1}\Phi_{\frac{3}{2},\frac{\alpha+\mu+3}{2}}\left(\lambda,\frac{\mu-\alpha}{2},q,\frac{p^2}{16}\right),
\end{eqnarray}
with $\Re(p)>0,\,-\infty<\alpha<\infty$ and
\begin{eqnarray}\label{PHI}
\Phi_{b_1,b_2}(\lambda,s,q,\xi)&=&\int_{0}^{\infty}\frac{t^{s-1}e^{-qt}}{1-\lambda e^{-t}} \,{}_0F_2\left(
\begin{array}{ll}
-&\\
&;-\frac{\xi}{t}\\
b_1,b_2&
\end{array}
\right)
dt\nonumber\\
&=&\int_{0}^{\infty}\frac{t^{s-1}e^{-(q-1)t}}{e^{t}-\lambda } \,{}_0F_2\left(
\begin{array}{ll}
-&\\
&;-\frac{\xi}{t}\\
b_1,b_2&
\end{array}
\right)
dt,
\end{eqnarray}
  $s\in\mathbb{C},\,\Re(\xi)>0$ and $b_1,b_2\in \mathbb{C}\setminus\mathbb{Z}_0^{-}.$

\end{proposition}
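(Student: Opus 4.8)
The plan is to combine the two generalized incomplete gamma functions, insert the integral representation \eqref{boujlkha} of $R_K$, and reduce everything to the defining integral \eqref{PHI} of $\Phi_{b_1,b_2}$ by evaluating one auxiliary integral. First I would add \eqref{gGamma1} and \eqref{gGamma}: since $\int_0^x+\int_x^\infty=\int_0^\infty$ the $x$-dependence cancels and
\[
\Gamma_\mu(\alpha,x;q;\lambda;p)+\gamma_\mu(\alpha,x;q;\lambda;p)=\sqrt{\frac{2p}{\pi}}\int_0^\infty t^{\alpha-\frac32}e^{-t}R_K\!\left(\frac{p}{t},-\mu-\tfrac12,q,\lambda\right)dt.
\]
Substituting \eqref{boujlkha} with $z=p/t$ and exponent parameter $-\mu-\frac12$, pulling out $(p/(2t))^{-\mu-1/2}=(p/2)^{-\mu-1/2}t^{\mu+1/2}$, and collecting constants (the prefactor $\sqrt{2p/\pi}\cdot\frac12\cdot(p/2)^{-\mu-1/2}$ reduces to $\frac{1}{\sqrt\pi}(p/2)^{-\mu}$) yields the double integral
\[
\frac{1}{\sqrt\pi}\left(\frac p2\right)^{-\mu}\int_0^\infty\int_0^\infty t^{\alpha+\mu-1}e^{-t}\,\frac{u^{\mu-\frac12}e^{-qu-\frac{p^2}{4t^2u}}}{1-\lambda e^{-u}}\,dt\,du .
\]

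Next I would interchange the two integrations by Fubini (absolute convergence holds for $\Re(p)>0$, $0<q\le1$, $-1\le\lambda\le1$ because of the super-exponential decay $e^{-p^2/4t^2u}$ as $t\to0$ and the factors $e^{-t},e^{-qu}$), and isolate the inner integral
\[
I(u)=\int_0^\infty t^{\,s-1}e^{-t-\frac{a}{t^2}}\,dt,\qquad s=\alpha+\mu,\quad a=\frac{p^2}{4u}.
\]
The heart of the proof is the evaluation of $I(u)$, and this is where I expect the main obstacle. The naive move of expanding $e^{-a/t^2}$ and integrating term by term produces only $\sum_k\frac{(-1)^k}{k!}\Gamma(s-2k)a^k$, i.e.\ a single ${}_0F_2$; moreover it is illegitimate since $\Gamma(s-2k)$ eventually meets the poles of the gamma function. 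Instead I would use the Mellin--Barnes representation $e^{-a/t^2}=\frac{1}{2\pi i}\int_{(c)}\Gamma(w)a^{-w}t^{2w}\,dw$ with $c>0$, interchange, and carry out the $t$-integral to get
\[
I(u)=\frac{1}{2\pi i}\int_{(c)}\Gamma(w)\,\Gamma(s+2w)\,a^{-w}\,dw .
\]
Closing the contour to the left (Stirling bounds kill the arcs) and summing residues picks up \emph{two} families of poles: those of $\Gamma(w)$ at $w=-k$, and those of $\Gamma(s+2w)$ at $w=-(s+n)/2$ (each carrying the factor $\tfrac12$ from the chain rule), the latter splitting by the parity of $n$. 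Recognising that both families contribute is exactly what produces the three summands of \eqref{eq3}; the identity then holds for generic parameters and extends by continuity and analytic continuation in $\alpha$.

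Finally I would convert each residue sum into a ${}_0F_2$ using $\Gamma(b-m)=\frac{(-1)^m\Gamma(b)}{(1-b)_m}$, the duplication rule $(a)_{2k}=4^k(\tfrac a2)_k(\tfrac{a+1}2)_k$, and $(2m)!=4^m m!\,(\tfrac12)_m$, $(2m+1)!=4^m m!\,(\tfrac32)_m$. The $\Gamma(w)$-poles give $\Gamma(\alpha+\mu)\,{}_0F_2(-;1-\tfrac{\alpha+\mu}2,\tfrac12-\tfrac{\alpha+\mu}2;-\tfrac{p^2}{16u})$, while the even- and odd-$n$ poles of $\Gamma(s+2w)$ give the ${}_0F_2$'s with lower parameters $\{\tfrac12,\tfrac{\alpha+\mu+2}2\}$ and $\{\tfrac32,\tfrac{\alpha+\mu+3}2\}$, weighted by $\Gamma(-\tfrac{\alpha+\mu}2)u^{-(\alpha+\mu)/2}$ and $-\Gamma(-\tfrac{\alpha+\mu+1}2)u^{-(\alpha+\mu+1)/2}$ respectively. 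Substituting these back, each remaining $u$-integral
\[
\int_0^\infty\frac{u^{s'-1}e^{-qu}}{1-\lambda e^{-u}}\,{}_0F_2\Bigl(-;b_1,b_2;-\tfrac{p^2}{16u}\Bigr)\,du
\]
is, by definition, $\Phi_{b_1,b_2}(\lambda,s',q,\tfrac{p^2}{16})$; matching the exponents gives $s'=\mu+\tfrac12,\ \tfrac{\mu-\alpha+1}2,\ \tfrac{\mu-\alpha}2$ in the three cases, and collecting the powers $(p/2)^{-\mu}\cdot(p/2)^{\alpha+\mu}=(p/2)^{\alpha}$, $(p/2)^{-\mu}\cdot(p/2)^{\alpha+\mu+1}=(p/2)^{\alpha+1}$ reproduces \eqref{eq3} term by term.
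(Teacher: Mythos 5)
Your proposal is correct, and its skeleton coincides with the paper's proof: add \eqref{gGamma1} and \eqref{gGamma} to get the full integral over $(0,\infty)$, insert \eqref{boujlkha}, collect the prefactor $\sqrt{2p/\pi}\cdot\tfrac12\cdot(p/2)^{-\mu-1/2}=\tfrac{1}{\sqrt\pi}(p/2)^{-\mu}$, interchange the two integrations, evaluate the inner $t$-integral, and read off the three $\Phi_{b_1,b_2}$ terms. The one genuine difference is the treatment of the key inner integral $\int_0^\infty t^{\alpha+\mu-1}e^{-t-p^2/(4t^2\tau)}\,dt$: the paper simply quotes it as formula 6, p.~31 of Prudnikov et al.\ \cite{prudnikov}, whereas you derive it from scratch via the Cahen--Mellin representation $e^{-a/t^2}=\frac{1}{2\pi i}\int_{(c)}\Gamma(w)a^{-w}t^{2w}\,dw$, obtaining $\frac{1}{2\pi i}\int_{(c)}\Gamma(w)\Gamma(s+2w)a^{-w}\,dw$ and closing the contour to the left. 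Your residue computation is right: the poles of $\Gamma(w)$ give $\Gamma(\alpha+\mu)\,{}_0F_2\bigl(-;1-\tfrac{\alpha+\mu}{2},\tfrac12-\tfrac{\alpha+\mu}{2};-\tfrac{p^2}{16\tau}\bigr)$, and the poles of $\Gamma(s+2w)$, split by parity and carrying the factor $\tfrac12$, give the remaining two terms with the correct signs, gamma factors and powers $a^{(s+n)/2}$, which after the Pochhammer manipulations reproduce \eqref{int2} exactly; the subsequent bookkeeping ($s'=\mu+\tfrac12$, $\tfrac{\mu-\alpha+1}{2}$, $\tfrac{\mu-\alpha}{2}$ and the powers $(p/2)^\alpha$, $(p/2)^{\alpha+1}$) matches the paper's. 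What the paper's route buys is brevity; what yours buys is a self-contained proof of the cited table entry, together with an explanation of \emph{why} three ${}_0F_2$ summands must appear (two families of poles, one splitting by parity) and a correct warning that the naive term-by-term expansion of $e^{-a/t^2}$ is illegitimate (the termwise integrals $\int_0^\infty t^{s-2k-1}e^{-t}dt$ diverge for large $k$) and would only ever produce the first family. Either way the argument is valid for generic $\alpha$ and extends by analytic continuation, as you note.
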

\begin{proof}
We have
\begin{eqnarray}
\Gamma_\mu(\alpha, x;q;\lambda ; p)+\gamma_\mu(\alpha, x;q;\lambda ; p)&=&\sqrt{\frac{2p}{\pi}}\int_{0}^{\infty}t^{\alpha-\frac{3}{2}}e^{-t}R_K\left(\frac{p}{t},-\mu-\frac{1}{2},q,\lambda\right)dt\nonumber\\
&=&\frac{1}{\sqrt{\pi}}\left(\frac{p}{2}\right)^{-\mu}\int_{0}^{\infty}t^{\alpha+\mu-1}e^{-t} \left(\int_{0}^{\infty}\tau^{\mu-\frac{1}{2}}\frac{e^{-q\tau-\frac{p^2}{4t^2\tau}}}{1-\lambda e^{-\tau}}d\tau\right)dt\nonumber\\
&=&\frac{1}{\sqrt{\pi}}\left(\frac{p}{2}\right)^{-\mu} \int_{0}^{\infty}\tau^{\mu-\frac{1}{2}}\frac{e^{-q\tau}}{1-\lambda e^{-\tau}}\left(\int_{0}^{\infty}t^{\alpha+\mu-1}e^{-t}e^{-\frac{p^2}{4t^2\tau}}dt\right)d\tau.\label{int1}
\end{eqnarray}

 Using the integral \cite[pp. 31, formula 6]{prudnikov}, we obtain
 \begin{eqnarray}
 \int_{0}^{\infty}t^{\alpha+\mu-1}e^{-t}e^{-\frac{p^2}{4t^2\tau}}dt&=&\Gamma(\alpha+\mu)\,{}_0F_{2}\left(
\begin{array}{ll}
-&\\
&;-\frac{p^2}{16\tau}\\
1-\frac{\alpha+\mu}{2},\frac{1}{2}-\frac{\alpha+\mu}{2}&
\end{array}
\right)
\nonumber\\&&+\frac{\Gamma\left(-\frac{\alpha+\mu}{2}\right)}{2}\left(\frac{p^2}{4\tau}\right)^{\frac{\alpha+\mu}{2}}\,{}_0F_{2}\left(
\begin{array}{ll}
-&\\
&;-\frac{p^2}{16\tau}\\
\frac{1}{2},\frac{\alpha+\mu+2}{2}&
\end{array}
\right)\nonumber\\
&&-\frac{\Gamma\left(-\frac{\alpha+\mu+1}{2}\right)}{2}\left(\frac{p^2}{4\tau}\right)^{\frac{\alpha+\mu+1}{2}}\,{}_0F_{2}\left(
\begin{array}{ll}
-&\\
&;-\frac{p^2}{16\tau}\\
\frac{3}{2},\frac{\alpha+\mu+3}{2}&
\end{array}
\right).\label{int2}
 \end{eqnarray}

Finally, substituting \eqref{int2} in \eqref{int1} and by using the notation  \eqref{PHI}  we get the desired result.
\end{proof}

\begin{proposition}[Recurrence relation]

\begin{equation}
\Gamma_\mu(\alpha+1, x;q;\lambda ; p)=
(\alpha+\mu)\Gamma_\mu(\alpha, x;q;\lambda ; p)
+p\Gamma_{\mu-1}(\alpha-1, x;q;\lambda ; p)+\sqrt{\frac{2p}{\pi}}x^{\alpha-\frac{1}{2}}e^{-x}R_K\left(\frac{p}{x},-\mu-\frac{1}{2},q,\lambda\right),
\end{equation}
$(\Re(p)>0,\,-\infty<\alpha<\infty).$
\end{proposition}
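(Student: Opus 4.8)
The plan is to prove the identity by integrating by parts in the defining integral \eqref{gGamma} for $\Gamma_\mu(\alpha+1,x;q;\lambda;p)$, after first establishing a differentiation formula for the extended Macdonald function $R_K$ with respect to its first argument. Writing $\nu$ for the order (to avoid clashing with the $\alpha$ of the gamma function), I would start from the integral representation \eqref{boujlkha}, differentiate under the integral sign (justified by uniform convergence on compact subsets of $\Re(z^2)>0$), and split the derivative into the contribution from the prefactor $(z/2)^\nu$ and from the exponential $e^{-z^2/4s}$. This should yield
\[
\frac{\partial}{\partial z}R_K(z,\nu,q,\lambda)=\frac{\nu}{z}R_K(z,\nu,q,\lambda)-R_K(z,\nu+1,q,\lambda),
\]
the exact analogue of the classical relation $\frac{d}{dz}\!\left[z^{-\nu}K_\nu(z)\right]=-z^{-\nu}K_{\nu+1}(z)$ for the Macdonald function.

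Next, setting $\nu=-\mu-\tfrac12$ so that $\nu+1=-(\mu-1)-\tfrac12$, and putting $z=p/t$, the chain rule together with the identity above gives
\[
\frac{d}{dt}\left[t^{\alpha-\frac12}R_K\!\left(\tfrac{p}{t},-\mu-\tfrac12,q,\lambda\right)\right]
=(\alpha+\mu)\,t^{\alpha-\frac32}R_K\!\left(\tfrac{p}{t},-\mu-\tfrac12,q,\lambda\right)
+p\,t^{\alpha-\frac52}R_K\!\left(\tfrac{p}{t},-(\mu-1)-\tfrac12,q,\lambda\right),
\]
where the coefficient arises from $(\alpha-\tfrac12)+(\mu+\tfrac12)=\alpha+\mu$. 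The decisive point is that the order $\nu+1$ produced by differentiation is precisely the order $-(\mu-1)-\tfrac12$ attached to the index $\mu-1$, which is exactly what makes the term $\Gamma_{\mu-1}$ surface in the statement.

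I would then integrate \eqref{gGamma} for $\Gamma_\mu(\alpha+1,x;q;\lambda;p)$ by parts with $dv=e^{-t}\,dt$ and $u=t^{\alpha-\frac12}R_K(p/t,-\mu-\tfrac12,q,\lambda)$. The boundary term at $t=\infty$ vanishes on account of the factor $e^{-t}$ and the exponential decay of $R_K$ recorded in the asymptotic estimates of the introduction, while the boundary term at $t=x$ produces exactly $\sqrt{2p/\pi}\,x^{\alpha-\frac12}e^{-x}R_K(p/x,-\mu-\tfrac12,q,\lambda)$. Substituting the derivative computed above into the remaining integral and multiplying through by $\sqrt{2p/\pi}$, the two resulting integrals are recognized, through the definition \eqref{gGamma}, as $(\alpha+\mu)\Gamma_\mu(\alpha,x;q;\lambda;p)$ and $p\,\Gamma_{\mu-1}(\alpha-1,x;q;\lambda;p)$ respectively, which assembles the claimed recurrence.

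The main obstacle I anticipate is the rigorous justification of the $R_K$ differentiation formula: one must legitimately differentiate under the integral sign in \eqref{boujlkha} and re-express the resulting integral $\int_0^\infty s^{-\nu-2}e^{-qs-z^2/4s}/(1-\lambda e^{-s})\,ds$ as $2R_K(z,\nu+1,q,\lambda)/(z/2)^{\nu+1}$, paying attention to the range $-1\le\lambda\le1$ (where the denominator $1-\lambda e^{-s}$ can approach zero as $s\to0$ when $\lambda=1$) and to $0<q\le1$. Once this identity is secured, everything else is a routine application of the product and chain rules and bookkeeping of the parameters.
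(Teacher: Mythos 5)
Your proposal is correct and takes essentially the same route as the paper: the key identity you derive for $\partial_z R_K$ plus the chain rule is exactly the paper's differentiation formula \eqref{dkt}, and your integration by parts with $dv=e^{-t}\,dt$ is just a repackaging of the paper's step of differentiating the product $t^{\alpha-\frac{1}{2}}e^{-t}R_K\left(\frac{p}{t},-\mu-\frac{1}{2},q,\lambda\right)$ and integrating from $x$ to $\infty$, with the same boundary term and the same identification of the two remaining integrals as $(\alpha+\mu)\Gamma_\mu(\alpha,x;q;\lambda;p)$ and $p\,\Gamma_{\mu-1}(\alpha-1,x;q;\lambda;p)$.
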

\begin{proof}
We have
\begin{eqnarray}\label{dkt}
\frac{d}{dt}\left[ R_K\left(\frac{p}{t},-\mu-\frac{1}{2},q,\lambda\right)\right]&=&\frac{d}{dt}\left[ \frac{\left(\frac{p}{2t}\right)^{-\mu-\frac{1}{2}}}{2}\int_{0}^{\infty}\tau^{\mu-\frac{1}{2}}\frac{e^{-q\tau-\frac{p^2}{4t^2\tau}}}{1-\lambda e^{-\tau}}d\tau\right]\nonumber\\
&=&\frac{\mu+\frac{1}{2}}{t}R_K\left(\frac{p}{t},-\mu-\frac{1}{2},q,\lambda\right)+\frac{p}{t^2}R_K\left(\frac{p}{t},-\mu+\frac{1}{2},q,\lambda\right).
\end{eqnarray}

Differentiating  $t^{\alpha-\frac{1}{2}}e^{-t}R_K\left(\frac{p}{t},-\mu-\frac{1}{2},q,\lambda\right)$ with respect to $t$ and by using \eqref{dkt}, we get
\begin{eqnarray}\label{dkt2}
&&\frac{d}{dt}\left[t^{\alpha-\frac{1}{2}}e^{-t}R_K\left(\frac{p}{t},-\mu-\frac{1}{2},q,\lambda\right)\right]=
(\alpha+\mu)t^{\alpha-\frac{3}{2}}e^{-t}R_K\left(\frac{p}{t},-\mu-\frac{1}{2},q,\lambda\right)\nonumber\\
&&+p\,t^{\alpha-\frac{5}{2}}e^{-t}R_K\left(\frac{p}{t},-\mu+\frac{1}{2},q,\lambda\right)-t^{\alpha-\frac{1}{2}}e^{-t}R_K\left(\frac{p}{t},-\mu-\frac{1}{2},q,\lambda\right).
\end{eqnarray}

Multiplying both sides of \eqref{dkt2} by $\sqrt{\frac{2p}{\pi}}$ and integrating from $x$ to $\infty$ and using \eqref{gGamma}, we find
\begin{equation*}
0-\sqrt{\frac{2p}{\pi}}x^{\alpha-\frac{1}{2}}e^{-x}R_K\left(\frac{p}{x},-\mu-\frac{1}{2},q,\lambda\right)=
(\alpha+\mu)\Gamma_\mu(\alpha, x;q;\lambda ; p)
+p\Gamma_{\mu-1}(\alpha-1, x;q;\lambda ; p)-\Gamma_\mu(\alpha+1, x;q;\lambda ; p),
\end{equation*}
which can be also written as
\begin{equation*}
\Gamma_\mu(\alpha+1, x;q;\lambda ; p)=
(\alpha+\mu)\Gamma_\mu(\alpha, x;q;\lambda ; p)
+p\Gamma_{\mu-1}(\alpha-1, x;q;\lambda ; p)+\sqrt{\frac{2p}{\pi}}x^{\alpha-\frac{1}{2}}e^{-x}R_K\left(\frac{p}{x},-\mu-\frac{1}{2},q,\lambda\right).
\end{equation*}
\end{proof}

\begin{proposition} The following formula holds
\begin{equation}
\Gamma_{\mu-1}(\alpha, x;1;\lambda ; p)-\Gamma_{\mu+1}(\alpha, x;1;\lambda ; p)+\frac{2\mu+1}{p}\Gamma_{\mu}(\alpha+1, x;1;\lambda ; p)=\lambda\frac{\partial}{\partial \lambda}\Gamma_{\mu+1}(\alpha, x;1;\lambda ; p),
\end{equation}
$(\Re(p)>0,\,-\infty<\alpha<\infty).$
\end{proposition}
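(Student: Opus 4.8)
The plan is to reduce the proposition to a single pointwise three‑term recurrence for the kernel $R_K$ and then integrate. All four quantities come from the defining integral \eqref{gGamma}: writing $f_{\nu}(t):=R_K\!\left(\frac{p}{t},-\nu-\frac{1}{2},1,\lambda\right)$, the terms $\Gamma_{\mu-1}(\alpha,x;1;\lambda;p)$ and $\Gamma_{\mu+1}(\alpha,x;1;\lambda;p)$ carry the weight $\sqrt{2p/\pi}\,t^{\alpha-\frac32}e^{-t}$, while $\Gamma_{\mu}(\alpha+1,x;1;\lambda;p)$ carries $\sqrt{2p/\pi}\,t^{\alpha-\frac12}e^{-t}$, i.e.\ one extra power of $t$. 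After factoring out the common $\sqrt{2p/\pi}\,t^{\alpha-\frac32}e^{-t}$ and pulling $\lambda\frac{\partial}{\partial\lambda}$ inside the ($\lambda$‑independent weighted) integral in \eqref{gGamma}, it suffices to prove, for each fixed $t>0$,
\begin{equation}\label{ptwise}
f_{\mu-1}(t)-f_{\mu+1}(t)+\frac{(2\mu+1)\,t}{p}\,f_{\mu}(t)=\lambda\frac{\partial}{\partial\lambda}f_{\mu+1}(t),
\end{equation}
which, writing $z=p/t$, is the $\lambda$‑regularized analogue of the classical Macdonald recurrence $K_{\nu-1}(z)-K_{\nu+1}(z)+\frac{2\nu}{z}K_{\nu}(z)=0$ with $\nu=\mu+\frac12$; the right‑hand side of \eqref{ptwise} is exactly the defect produced by the regularizing factor.

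To establish \eqref{ptwise} I would work from the integral representation \eqref{boujlkha}, which with $q=1$ gives
\[
f_{\nu}(t)=\frac{1}{2}\left(\frac{p}{2t}\right)^{-\nu-\frac12}\int_{0}^{\infty}\tau^{\nu-\frac12}\,\frac{e^{-\tau-\frac{p^2}{4t^2\tau}}}{1-\lambda e^{-\tau}}\,d\tau .
\]
The crucial observation is that the regularizing factor $g(\tau):=\left(1-\lambda e^{-\tau}\right)^{-1}$ satisfies
\[
\lambda\frac{\partial}{\partial\lambda}g(\tau)=\frac{\lambda e^{-\tau}}{\left(1-\lambda e^{-\tau}\right)^{2}}=-\frac{\partial}{\partial\tau}g(\tau).
\]
Hence, differentiating $f_{\mu+1}$ under the integral sign and replacing $\lambda\,\partial_{\lambda}g$ by $-\partial_{\tau}g$, I would integrate by parts in $\tau$. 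The boundary terms vanish: the Gaussian‑type factor $e^{-p^2/(4t^2\tau)}$ kills the contribution at $\tau=0$ (even against the $\tau^{-1}$ singularity of $g$ that appears when $\lambda=1$), and $e^{-\tau}$ kills it at $\tau=\infty$.

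The integration by parts transfers $\partial_{\tau}$ onto $\tau^{\mu+\frac12}e^{-\tau-\frac{p^2}{4t^2\tau}}$, and here the choice $q=1$ is decisive: since $\partial_{\tau}e^{-\tau}=-e^{-\tau}$, the derivative produces exactly the three monomials $(\mu+\tfrac12)\tau^{\mu-\frac12}$, $-\tau^{\mu+\frac12}$ and $\frac{p^2}{4t^2}\tau^{\mu-\frac32}$, whose integrals against $\frac{e^{-\tau-p^2/(4t^2\tau)}}{1-\lambda e^{-\tau}}$ are precisely the integrals defining $f_{\mu}$, $f_{\mu+1}$ and $f_{\mu-1}$ (up to the explicit powers of $p/(2t)$). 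Collecting the prefactors and using $\mu+\frac12=\frac{2\mu+1}{2}$ reassembles the left‑hand side of \eqref{ptwise}; had $q\neq1$ the middle term would acquire a spurious factor $q$ and the identity would fail to close, which explains the restriction $q=1$ in the statement. Finally, multiplying \eqref{ptwise} by $\sqrt{2p/\pi}\,t^{\alpha-\frac32}e^{-t}$, integrating over $t\in[x,\infty)$, and pulling $\lambda\frac{\partial}{\partial\lambda}$ back out of the integral yields the stated relation. The main technical point to be careful about is the legitimacy of differentiating under the integral sign in $\lambda$ and the uniform vanishing of the boundary terms up to the endpoints $\lambda=\pm1$, where $g$ develops an integrable singularity at $\tau=0$; dominated convergence against the $e^{-p^2/(4t^2\tau)}$ factor handles both.
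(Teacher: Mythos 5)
Your proof is correct, and it shares the paper's overall skeleton: both arguments reduce the proposition to the pointwise three-term recurrence
\begin{equation*}
R_K\left(z,-\alpha+1,1,\lambda\right)-R_K\left(z,-\alpha-1,1,\lambda\right)+\frac{2\alpha}{z}R_K\left(z,-\alpha,1,\lambda\right)=\lambda\frac{\partial}{\partial\lambda}R_K\left(z,-\alpha-1,1,\lambda\right),
\end{equation*}
taken at $z=p/t$, $\alpha=\mu+\frac{1}{2}$, and then integrate it against the $\lambda$-independent weight $\sqrt{2p/\pi}\,t^{\alpha-\frac{3}{2}}e^{-t}$ over $[x,\infty)$. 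Where you genuinely differ is in how that recurrence is obtained: the paper does not prove it at all, but simply quotes it as formula (22) of Boudjelkha \cite{Boudjelkha}, whereas you derive it from the integral representation \eqref{boujlkha} via the identity $\lambda\frac{\partial}{\partial\lambda}\left(1-\lambda e^{-\tau}\right)^{-1}=-\frac{\partial}{\partial\tau}\left(1-\lambda e^{-\tau}\right)^{-1}$ followed by an integration by parts in $\tau$. Your computation checks out: the derivative $\frac{\partial}{\partial\tau}\left[\tau^{\mu+\frac{1}{2}}e^{-\tau-p^{2}/(4t^{2}\tau)}\right]$ produces exactly the monomials $(\mu+\frac{1}{2})\tau^{\mu-\frac{1}{2}}$, $-\tau^{\mu+\frac{1}{2}}$ and $\frac{p^{2}}{4t^{2}}\tau^{\mu-\frac{3}{2}}$, whose integrals against $e^{-\tau-p^{2}/(4t^{2}\tau)}\left(1-\lambda e^{-\tau}\right)^{-1}$ reassemble, with the correct powers of $p/(2t)$, your functions $f_{\mu}$, $f_{\mu+1}$ and $f_{\mu-1}$, where $f_{\nu}(t)=R_K\left(\frac{p}{t},-\nu-\frac{1}{2},1,\lambda\right)$. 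What your route buys is self-containment (no appeal to an external formula), explicit treatment of the boundary terms and of differentiation under the integral sign at $\lambda=\pm1$ (all of which the paper leaves implicit), and a structural explanation of why the hypothesis $q=1$ is essential. What the paper's route buys is brevity: one citation plus substitution into \eqref{gGamma}. One small wording caveat: for $q\neq1$ the identity does not so much ``fail to close'' as survive in a modified form, since the same integration by parts yields $f_{\mu-1}-qf_{\mu+1}+\frac{(2\mu+1)t}{p}f_{\mu}=\lambda\frac{\partial}{\partial\lambda}f_{\mu+1}$; it is only the identity in the stated form that forces $q=1$.
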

\begin{proof}
By using \eqref{gGamma}, for $q=1$ and the following relation \cite[(22)]{Boudjelkha}, we get
\begin{equation}
R_K(z,-\alpha+1,1,\lambda)-R_K(z,-\alpha-1,1,\lambda)+\frac{2\alpha}{z}R_K(z,-\alpha,1,\lambda)=\lambda\frac{\partial}{\partial \lambda}R_K(z,-\alpha-1,1,\lambda).
\end{equation}
\end{proof}

\begin{proposition}[Laplace transform]
Let\begin{equation*}
H(\tau)=\left\{
\begin{array}{ll}
1&\tau>0\\
0&\tau<0
\end{array}\right.
\end{equation*}
be the Heaviside unit step function and ${\cal{L}}$ be the Laplace transform operator. Then
\begin{equation}\label{lap1}
{\cal{L}}\left\{t^{\alpha-\frac{3}{2}}R_K\left(\frac{p}{t},-\mu-\frac{1}{2},q,\lambda\right)H(t-x);s\right\}=\sqrt{\frac{\pi}{2p}}s^{-\alpha}\Gamma_\mu(\alpha, sx;q;\lambda ; sp),
\end{equation}
\begin{equation}\label{lap2}
{\cal{L}}\left\{t^{\alpha-\frac{3}{2}}R_K\left(\frac{p}{t},-\mu-\frac{1}{2},q,\lambda\right)H(t-x)H(t);s\right\}=\sqrt{\frac{\pi}{2p}}s^{-\alpha}\gamma_\mu(\alpha, sx;q;\lambda ; sp),
\end{equation}
($x>0$, $\Re (p)>0$, $-\infty<\alpha<\infty$).
\end{proposition}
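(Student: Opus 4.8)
The plan is to evaluate both transforms directly from the definition ${\cal{L}}\{f(t);s\}=\int_{0}^{\infty}e^{-st}f(t)\,dt$, using the Heaviside factors only to fix the range of integration, and then to perform the single substitution $u=st$ that simultaneously rescales the limit, the argument of $R_K$, and the parameter $p$. No interchange of integrations is required, so the argument is essentially one change of variable followed by recognition of the defining integral \eqref{gGamma} (resp.\ \eqref{gGamma1}).

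First I would treat \eqref{lap1}. Since $H(t-x)$ vanishes for $t<x$ and equals $1$ for $t>x$, the transform reduces to $\int_{x}^{\infty}e^{-st}t^{\alpha-\frac{3}{2}}R_K\left(\frac{p}{t},-\mu-\frac{1}{2},q,\lambda\right)dt$. Setting $u=st$, so that $t=u/s$ and $dt=du/s$, sends the lower limit $x\mapsto sx$, turns $t^{\alpha-\frac{3}{2}}\,dt$ into $s^{-\alpha+\frac{1}{2}}u^{\alpha-\frac{3}{2}}\,du$, and, crucially, replaces the argument $\frac{p}{t}=\frac{ps}{u}=\frac{sp}{u}$ while leaving $q$ and $\lambda$ untouched, because $R_K$ depends on $t$ only through the ratio $p/t$. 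This is what forces $p$ (rather than $q$ or $\lambda$) to scale with $s$.

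The second step is to recognize the resulting integral $\int_{sx}^{\infty}e^{-u}u^{\alpha-\frac{3}{2}}R_K\left(\frac{sp}{u},-\mu-\frac{1}{2},q,\lambda\right)du$ as, up to the normalizing constant, the function $\Gamma_\mu(\alpha,sx;q;\lambda;sp)$ of \eqref{gGamma} with $x$ and $p$ replaced by $sx$ and $sp$. Matching constants — the definition carries the factor $\sqrt{2(sp)/\pi}$ — the accumulated powers of $s$ combine as $s^{-\alpha+\frac{1}{2}}\sqrt{\pi/(2sp)}=s^{-\alpha}\sqrt{\pi/(2p)}$, which is exactly the claimed prefactor. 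The proof of \eqref{lap2} is identical, except that the combined Heaviside factor restricts the integration to the bounded interval $0<t<x$ (the substitution then yields the range $0<u<sx$), so the integral is identified with the lower incomplete gamma $\gamma_\mu$ of \eqref{gGamma1}.

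The computation has no serious obstacle; the one point that needs care is the bookkeeping of the powers of $s$. The factor $s^{-1}$ from $dt$ and $s^{-\alpha+\frac{3}{2}}$ from $t^{\alpha-\frac{3}{2}}$ must be collected together with the $s^{1/2}$ hidden inside the normalization $\sqrt{2(sp)/\pi}$ of the rescaled gamma function, and only when these are combined correctly does the half-integer power cancel to leave the clean factor $s^{-\alpha}$. I would also remark briefly that convergence at both endpoints is unproblematic — the $e^{-st}$ decay controls the tail in \eqref{lap1}, and the exponential decay of $R_K$ for large argument controls the behaviour near $t=0$ in \eqref{lap2} — so the formal manipulations are valid for all real $\alpha$ in the stated range.
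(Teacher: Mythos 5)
Your proof is correct and takes essentially the same route as the paper's: use the Heaviside factor to cut the integral down to the relevant range, substitute $u=st$ (the paper writes $t=\tau/s$), and recognize the rescaled integral as the definition of $\Gamma_\mu(\alpha,sx;q;\lambda;sp)$, with the powers of $s$ and the normalization $\sqrt{2(sp)/\pi}$ combining to give $\sqrt{\pi/(2p)}\,s^{-\alpha}$. Like the paper, you dispatch \eqref{lap2} as the analogous computation over the bounded interval $0<t<x$ (note that, as literally written, $H(t-x)H(t)$ with $x>0$ would coincide with $H(t-x)$; your reading of it as the indicator of $0<t<x$, i.e. $H(x-t)H(t)$, is the intended one).
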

\begin{proof}
We have
\begin{eqnarray*}
{\cal{L}}\left\{t^{\alpha-\frac{3}{2}}R_K\left(\frac{p}{t},-\mu-\frac{1}{2},q,\lambda\right)H(t-x);s\right\}&=&\int_{0}^{\infty}t^{\alpha-\frac{3}{2}}R_K\left(\frac{p}{t},-\mu-\frac{1}{2},q,\lambda\right)e^{-st}H(t-x)dt\nonumber\\
&=&\int_{x}^{\infty}t^{\alpha-\frac{3}{2}}R_K\left(\frac{p}{t},-\mu-\frac{1}{2},q,\lambda\right)e^{-st}dt.
\end{eqnarray*}

Substituting $t=\frac{\tau}{s}$, $dt=\frac{d\tau}{s}$, we get
\begin{eqnarray*}
\int_{x}^{\infty}t^{\alpha-\frac{3}{2}}R_K\left(\frac{p}{t},-\mu-\frac{1}{2},q,\lambda\right)e^{-st}dt&=&s^{-\alpha+\frac{1}{2}}\int_{sx}^{\infty}\tau^{\alpha-\frac{3}{2}}e^{-\tau}R_K\left(\frac{sp}{\tau},-\mu-\frac{1}{2},q,\lambda\right)dt\\
&=&\sqrt{\frac{\pi}{2p}}s^{-\alpha}\Gamma_\mu(\alpha, sx;q;\lambda ; sp).
\end{eqnarray*}

The proof of \eqref{lap2} is omitted since it is quite similar as that of \eqref{lap1}.
\end{proof}

\begin{proposition}[Parametric differentiation]

\begin{equation}
\frac{\partial}{\partial p}\left(\Gamma_\mu(\alpha, x;q;\lambda ; p)\right)=
-\frac{1}{p}\left[\mu\Gamma_\mu(\alpha, x;q;\lambda ; p)
+p\Gamma_{\mu-1}(\alpha-1, x;q;\lambda ; p)\right].
\end{equation}
\end{proposition}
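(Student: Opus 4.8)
The plan is to differentiate the defining integral \eqref{gGamma} with respect to $p$ and then recognize the resulting integrals as $\Gamma_\mu$ and $\Gamma_{\mu-1}$. First I would observe that in the integrand the factor $R_K\left(\frac{p}{t},-\mu-\frac{1}{2},q,\lambda\right)$ depends on $p$ and $t$ only through the ratio $z=p/t$. Since $\partial z/\partial p = 1/t$ while $dz/dt=-p/t^2$, the $p$-derivative and the $t$-derivative of this composite are proportional, namely $\frac{\partial}{\partial p}R_K\left(\frac{p}{t},-\mu-\frac{1}{2},q,\lambda\right)=-\frac{t}{p}\,\frac{d}{dt}R_K\left(\frac{p}{t},-\mu-\frac{1}{2},q,\lambda\right)$. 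This lets me reuse the derivative formula \eqref{dkt} already obtained in the proof of the recurrence relation, instead of differentiating the defining integral \eqref{boujlkha} of $R_K$ from scratch.

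Substituting \eqref{dkt} into the right-hand side and multiplying by $-t/p$ gives $\frac{\partial}{\partial p}R_K\left(\frac{p}{t},-\mu-\frac{1}{2},q,\lambda\right)=-\frac{\mu+\frac{1}{2}}{p}R_K\left(\frac{p}{t},-\mu-\frac{1}{2},q,\lambda\right)-\frac{1}{t}R_K\left(\frac{p}{t},-\mu+\frac{1}{2},q,\lambda\right)$. Next I would differentiate \eqref{gGamma} under the integral sign, applying the product rule to the prefactor $\sqrt{2p/\pi}$, whose $p$-derivative contributes the term $\frac{1}{2p}\Gamma_\mu(\alpha,x;q;\lambda;p)$, and then inserting the expression just found for the $p$-derivative of $R_K$ inside the remaining integral.

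This produces three integrals. The prefactor contribution together with the first term of $\partial_p R_K$ both reproduce the defining integrand of $\Gamma_\mu$, so they combine into the coefficient $\frac{1}{2p}-\frac{\mu+\frac{1}{2}}{p}=-\frac{\mu}{p}$ multiplying $\Gamma_\mu(\alpha,x;q;\lambda;p)$. The second term carries an extra factor $1/t$, which lowers the power $t^{\alpha-\frac{3}{2}}$ to $t^{\alpha-\frac{5}{2}}=t^{(\alpha-1)-\frac{3}{2}}$, while the shift $-\mu+\frac{1}{2}=-(\mu-1)-\frac{1}{2}$ turns the Bessel parameter into that of order $\mu-1$; it is therefore exactly $-\Gamma_{\mu-1}(\alpha-1,x;q;\lambda;p)$. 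Collecting the pieces gives $\frac{\partial}{\partial p}\Gamma_\mu(\alpha,x;q;\lambda;p)=-\frac{\mu}{p}\Gamma_\mu(\alpha,x;q;\lambda;p)-\Gamma_{\mu-1}(\alpha-1,x;q;\lambda;p)$, which is the asserted identity after factoring out $-1/p$. The only points that need care — and the main, though minor, obstacle — are justifying differentiation under the integral sign for $\Re(p)>0$ and the parameter bookkeeping that identifies the shifted integral with $\Gamma_{\mu-1}(\alpha-1,x;q;\lambda;p)$; the remaining computation is routine.
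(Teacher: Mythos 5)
Your proposal is correct and follows essentially the same route as the paper: differentiate \eqref{gGamma} under the integral sign, pick up the $\frac{1}{2p}\Gamma_\mu$ contribution from the prefactor $\sqrt{2p/\pi}$, and insert a formula for $\partial_p R_K$ that is exactly the paper's \eqref{par2}, then collect terms into $-\frac{\mu}{p}\Gamma_\mu - \Gamma_{\mu-1}(\alpha-1,x;q;\lambda;p)$. The only difference is cosmetic: the paper derives \eqref{par2} by differentiating the integral representation of $R_K$ directly in $p$, whereas you obtain the same formula from \eqref{dkt} via the chain-rule identity $\partial_p = -(t/p)\,\partial_t$ for functions of $p/t$ — a tidy shortcut, but the same argument.
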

\begin{proof}
\begin{eqnarray}\label{par1}
\frac{\partial}{\partial p}\left(\Gamma_\mu(\alpha, x;q;\lambda ; p)\right)&=&\frac{1}{2p}\sqrt{\frac{2p}{\pi}}\int_{x}^{\infty}t^{\alpha-\frac{3}{2}}e^{-t}R_K\left(\frac{p}{t},-\mu-\frac{1}{2},q,\lambda\right)dt\nonumber\\&&+\sqrt{\frac{2p}{\pi}}\int_{x}^{\infty}t^{\alpha-\frac{3}{2}}e^{-t}\frac{\partial}{\partial p}\left(R_K\left(\frac{p}{t},-\mu-\frac{1}{2},q,\lambda\right)\right)dt.
\end{eqnarray}

We have
\begin{eqnarray}\label{par2}
\frac{\partial}{\partial p}\left(R_K\left(\frac{p}{t},-\mu-\frac{1}{2},q,\lambda\right)\right)&=&-\frac{\mu+\frac{1}{2}}{p}\frac{(p/2t)^{-\mu-\frac{1}{2}}}{2}\int_{0}^{\infty}\tau^{\mu-\frac{1}{2}}\frac{e^{-q\tau- \frac{p^2}{4t^2\tau} }}{1-\lambda e^{-\tau}}d\tau-\frac{1}{t}\frac{(p/2t)^{-\mu+\frac{1}{2}}}{2}\int_{0}^{\infty}\tau^{\mu-\frac{3}{2}}\frac{e^{-q\tau- \frac{p^2}{4t^2\tau} }}{1-\lambda e^{-\tau}}d\tau\nonumber\\
&=&-\frac{\mu+\frac{1}{2}}{p}R_K\left(\frac{p}{t},-\mu-\frac{1}{2},q,\lambda\right)-\frac{1}{t}R_K\left(\frac{p}{t},-\mu+\frac{1}{2},q,\lambda\right),
\end{eqnarray}

Finally, by Substituting \eqref{par2} into \eqref{par1} we get the desired result.
\end{proof}

\subsection{The  generalized extended beta  function}

\begin{definition}
 The generalized  extended beta function  is given by

\begin{equation}\label{gbeta}
B_\mu(x, y;q;\lambda; p;m)=\sqrt{\frac{2p}{\pi}}\int_{0}^{1}t^{x-\frac{3}{2}}(1-t)^{y-\frac{3}{2}}R_K\left(\frac{p}{t^m(1-t)^m},-\mu-\frac{1}{2},q,\lambda\right)dt,
\end{equation}
where $x, y \in\mathbb{C}$, $0<q\leq 1$,  $-1\leq \lambda\leq 1$, $m > 0$ and $\Re(p) > 0$.
\end{definition}
\begin{remark}
Taking $\lambda = 0$ and $q=1$, \eqref{gbeta} is reduced to the extended   Euler's beta function \eqref{agarwal} defined by Agarwal et al. \cite{agarwal}.

\end{remark}

\begin{proposition}[Functional relations]\text{}

1. The following formula holds
 \begin{equation}\label{beta1}
B_\mu(x, y;q;\lambda; p;m)=B_\mu(x+1, y;q;\lambda; p;m)+B_\mu(x, y+1;q;\lambda; p;m).
\end{equation}

2. Let $n\in\mathbb{N}$. Then, the following summation formula holds
\begin{equation}\label{beta2}
B_\mu(x, y;q;\lambda; p;m)=\sum_{k=0}^nB_\mu(x+k, y+n-k;q;\lambda; p;m).
\end{equation}
\end{proposition}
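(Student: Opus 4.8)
The plan is to read both identities off the single integral representation \eqref{gbeta}, exploiting the fact that the kernel $R_K\!\left(\frac{p}{t^m(1-t)^m},-\mu-\frac{1}{2},q,\lambda\right)$ together with the constant $\sqrt{2p/\pi}$ is exactly the same for every admissible pair $(x,y)$; only the algebraic weight $t^{x-3/2}(1-t)^{y-3/2}$ carries the dependence on the first two arguments. Consequently every relation among the $B_\mu$ at shifted arguments reduces to an elementary identity among these weights under a common integral sign.

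First I would prove part 1. Writing $B_\mu(x+1,y;q;\lambda;p;m)$ and $B_\mu(x,y+1;q;\lambda;p;m)$ from \eqref{gbeta} and adding their integrands, the shared kernel and prefactor factor out and the algebraic weights combine as
$$
t^{x-\frac12}(1-t)^{y-\frac32}+t^{x-\frac32}(1-t)^{y-\frac12}
= t^{x-\frac32}(1-t)^{y-\frac32}\bigl[t+(1-t)\bigr]
= t^{x-\frac32}(1-t)^{y-\frac32}.
$$
Reattaching the common kernel and prefactor and integrating over $[0,1]$ recovers precisely the integrand of $B_\mu(x,y;q;\lambda;p;m)$, which is \eqref{beta1}. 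The only analytic point is the legitimacy of writing a single integral as the sum of two, which follows from the absolute convergence of each piece in the admissible ranges ($\Re(p)>0$, $m>0$, $-1\le\lambda\le1$, $0<q\le1$).

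Then I would establish part 2 by induction on $n$, using \eqref{beta1} as the sole engine. The case $n=0$ is a tautology and the case $n=1$ is exactly \eqref{beta1}. Assuming \eqref{beta2} for a fixed $n$, I would apply \eqref{beta1} to each summand,
$$
B_\mu(x+k,y+n-k)=B_\mu(x+k+1,y+n-k)+B_\mu(x+k,y+n-k+1),
$$
and then reindex the two resulting families (shifting $k\mapsto k-1$ in the first) so that both run along the anti-diagonal with index sum $n+1$; merging them delivers $\sum_{k=0}^{n+1}B_\mu(x+k,y+n+1-k)$, which is the $n+1$ instance of \eqref{beta2}.

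The hard part will be the index bookkeeping in this inductive step: after the termwise application of \eqref{beta1} one must align the shifted first family with the unshifted second family so that each lattice point $(x+k,\,y+(n+1)-k)$ for $0\le k\le n+1$ is produced with the correct contribution and the summation ranges close up consistently. Pinning down this alignment is the crux of part 2; all the genuine analytic content—absolute convergence and the elementary identity $t+(1-t)=1$—has already been supplied in part 1, so the remainder is purely combinatorial reindexing.
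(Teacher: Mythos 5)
Your proof of part 1 is correct and is essentially identical to the paper's: the prefactor $\sqrt{2p/\pi}$ and the kernel $R_K\bigl(\tfrac{p}{t^m(1-t)^m},-\mu-\tfrac12,q,\lambda\bigr)$ are common to all three integrals, and the weights collapse via $t+(1-t)=1$; absolute convergence justifies splitting the integral. No issue there.

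The inductive step for part 2, however, fails exactly at the ``combinatorial reindexing'' you yourself flag as the crux, and the failure is not repairable because the statement \eqref{beta2} is false for $n\geq 2$. Applying \eqref{beta1} to each summand of the inductive hypothesis and shifting $k\mapsto k-1$ in the first family yields $\sum_{j=1}^{n+1}B_\mu(x+j,y+n+1-j)+\sum_{k=0}^{n}B_\mu(x+k,y+n+1-k)$: the endpoints $k=0$ and $k=n+1$ each occur once, but every interior lattice point $1\leq k\leq n$ occurs \emph{twice}, so the merge produces $B_\mu(x,y+n+1)+2\sum_{k=1}^{n}B_\mu(x+k,y+n+1-k)+B_\mu(x+n+1,y)$, not the unweighted anti-diagonal sum. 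This is unavoidable: iterating a Pascal-type recurrence generates binomial coefficients. Indeed, under the integral sign the unweighted identity would require
\begin{equation*}
\sum_{k=0}^{n}t^{k}(1-t)^{n-k}=\frac{t^{n+1}-(1-t)^{n+1}}{2t-1}\equiv 1 \quad\text{on }(0,1),
\end{equation*}
which fails for $n\geq 2$ (at $t=\tfrac12$ the left side equals $(n+1)2^{-n}$); as a sanity check with the classical beta function, which satisfies the same recurrence, $B(1,1)=1$ while $B(3,1)+B(2,2)+B(1,3)=\tfrac56$. The correct statement is $B_\mu(x,y;q;\lambda;p;m)=\sum_{k=0}^{n}\binom{n}{k}B_\mu(x+k,y+n-k;q;\lambda;p;m)$, which follows immediately from $\sum_{k=0}^{n}\binom{n}{k}t^{k}(1-t)^{n-k}=1$ under the integral, or by your induction once the weights $\binom{n}{k}$ are inserted (Pascal's rule then closes the double-counting). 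Be aware that the paper's own proof --- ``for the other cases we can easily proceed by induction on $n$'' --- silently glosses over the same obstruction; by writing out the bookkeeping honestly, you have exposed an error in the paper rather than made a slip of your own.
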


\begin{proof}

1. The right-hand side of \eqref{beta1} yields to
\begin{equation*}
\sqrt{\frac{2p}{\pi}}\int_{0}^{1}\left\{t^{x-\frac{1}{2}}(1-t)^{y-\frac{3}{2}}+t^{x-\frac{3}{2}}(1-t)^{y-\frac{1}{2}}\right\}
R_K\left(\frac{p}{t^m(1-t)^m},-\mu-\frac{1}{2},q,\lambda\right)dt,
\end{equation*}
which, after simplification, implies
\begin{equation*}
\sqrt{\frac{2p}{\pi}}\int_{0}^{1}t^{x-\frac{3}{2}}(1-t)^{y-\frac{3}{2}}R_K\left(\frac{p}{t^m(1-t)^m},-\mu-\frac{1}{2},q,\lambda\right)dt,
\end{equation*}
which is equal to the left-hand side of \eqref{beta1}.

2. The case $n=0$ of \eqref{beta2} holds easily. The case $n=1$ of \eqref{beta2} is just  \eqref{beta1}. For the other cases we can easily proceed by induction on  $n.$
\end{proof}

\begin{proposition}\label{propbeta}
The following formula holds
\begin{equation}\label{beta3}
B_\mu(x, 1-y;q;\lambda; p;m)=\sum_{n=0}^\infty \frac{(y)_n}{n!}B_\mu(x+n, 1;q;\lambda; p;m).
\end{equation}

\end{proposition}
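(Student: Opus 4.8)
The plan is to derive \eqref{beta3} from the integral definition \eqref{gbeta} by expanding a binomial factor into its power series and then integrating term by term. The guiding observation is that the $(1-t)$–exponent on the left-hand side satisfies $(1-y)-\tfrac32=-\tfrac12-y$, so that $(1-t)^{(1-y)-3/2}$ splits as $(1-t)^{-1/2}(1-t)^{-y}$; the factor $(1-t)^{-1/2}$ is precisely the one occurring in $B_\mu(x+n,1;q;\lambda;p;m)$, whose $(1-t)$–exponent is $1-\tfrac32=-\tfrac12$. Thus the whole identity will reduce to pulling the extra factor $(1-t)^{-y}$ out as a series in $t$.

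First I would write the left-hand side explicitly as
\begin{equation*}
B_\mu(x,1-y;q;\lambda;p;m)=\sqrt{\frac{2p}{\pi}}\int_0^1 t^{x-\frac32}(1-t)^{-\frac12}(1-t)^{-y}R_K\left(\frac{p}{t^m(1-t)^m},-\mu-\frac12,q,\lambda\right)dt.
\end{equation*}
Next, on $0\le t<1$ I would invoke the generalized binomial series $(1-t)^{-y}=\sum_{n=0}^\infty \frac{(y)_n}{n!}t^n$, substitute it into the integrand, and interchange summation and integration. Since $t^{x-3/2}\cdot t^n=t^{x+n-3/2}$, each resulting integral is exactly
\begin{equation*}
\sqrt{\frac{2p}{\pi}}\int_0^1 t^{(x+n)-\frac32}(1-t)^{-\frac12}R_K\left(\frac{p}{t^m(1-t)^m},-\mu-\frac12,q,\lambda\right)dt=B_\mu(x+n,1;q;\lambda;p;m),
\end{equation*}
so that the weights $\frac{(y)_n}{n!}$ are carried along and \eqref{beta3} drops out.

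The only genuine obstacle is the justification of the term-by-term integration, since the binomial series converges for $|t|<1$ but is singular at the endpoint $t=1$, where $(1-t)^{-y}$ blows up. Here the regularizing factor $R_K$ rescues the argument: as $t\to 1^-$ the argument $p/(t^m(1-t)^m)\to\infty$, and by the large-argument asymptotics recalled in the introduction, $R_K\left(\tfrac{p}{t^m(1-t)^m},-\mu-\tfrac12,q,\lambda\right)$ decays like $z^{-1/2}e^{-z\sqrt q}$ with $z=p/(t^m(1-t)^m)$, hence exponentially fast. This decay dominates any algebraic growth of the series and of $t^{x-3/2}$, making the combined sum-integral absolutely convergent on $[0,1)$. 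Consequently Fubini's theorem (after passing to the absolute values $\frac{|(y)_n|}{n!}$ to check integrability, exactly as in the classical derivation of $B(x,1-y)=\sum_{n\ge0}\frac{(y)_n}{n!}B(x+n,1)$) legitimizes the interchange, and the stated identity follows.
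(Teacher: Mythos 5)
Your proof is correct and follows essentially the same route as the paper: write $B_\mu(x,1-y;q;\lambda;p;m)$ via the integral definition so the $(1-t)$-exponent is $-y-\tfrac12$, expand $(1-t)^{-y}=\sum_{n\ge 0}\frac{(y)_n}{n!}t^n$, and interchange sum and integral to recognize each term as $B_\mu(x+n,1;q;\lambda;p;m)$. Your additional justification of the interchange via the exponential decay of $R_K$ as $t\to 1^-$ is a welcome refinement of a step the paper merely asserts.
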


\begin{proof}
We have
\begin{equation}\label{beta4}
B_\mu(x, 1-y;q;\lambda; p;m)=\sqrt{\frac{2p}{\pi}}\int_{0}^{1}t^{x-\frac{3}{2}}(1-t)^{-y-\frac{1}{2}}R_K\left(\frac{p}{t^m(1-t)^m},-\mu-\frac{1}{2},q,\lambda\right)dt.
\end{equation}

By substituting  the formula
\begin{eqnarray}
(1-t)^{-y}=\sum_{n=0}^{\infty}(y)_n\frac{t^n}{n!},\;\;(|t|<1,\;\;y\in\mathbb{C}),
\end{eqnarray}
in the right-hand of \eqref{beta4} and after interchanging the order of integral and summation, we get \eqref{beta3}.
\end{proof}

\begin{proposition}
The following formula holds
\begin{equation}\label{beta3}
B_\mu(x, y;q;\lambda; p;m)=\sum_{n=0}^\infty B_\mu(x+n, y+1;q;\lambda; p;m).
\end{equation}

\end{proposition}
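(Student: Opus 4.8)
The plan is to start from the right-hand side, replace each summand by its integral representation \eqref{gbeta}, and then interchange the order of summation and integration exactly as in the proof of Proposition \ref{propbeta}. Concretely, I would write
\[
\sum_{n=0}^\infty B_\mu(x+n, y+1;q;\lambda; p;m)=\sqrt{\frac{2p}{\pi}}\sum_{n=0}^\infty\int_{0}^{1}t^{x+n-\frac{3}{2}}(1-t)^{y-\frac{1}{2}}R_K\left(\frac{p}{t^m(1-t)^m},-\mu-\frac{1}{2},q,\lambda\right)dt,
\]
noting that the exponent on $(1-t)$ becomes $(y+1)-\frac{3}{2}=y-\frac{1}{2}$, and that the only $n$-dependence sits in the factor $t^{n}$.

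Granting the interchange, the next step is purely algebraic: I would factor out the terms independent of $n$ and sum the geometric series $\sum_{n=0}^\infty t^n=\frac{1}{1-t}$, valid for $t\in(0,1)$, obtaining
\[
\sum_{n=0}^\infty t^{x+n-\frac{3}{2}}(1-t)^{y-\frac{1}{2}}=t^{x-\frac{3}{2}}(1-t)^{y-\frac{1}{2}}\sum_{n=0}^\infty t^n=t^{x-\frac{3}{2}}(1-t)^{y-\frac{1}{2}}\frac{1}{1-t}=t^{x-\frac{3}{2}}(1-t)^{y-\frac{3}{2}}.
\]
Substituting this collapsed expression back under the integral sign reproduces precisely the integrand in the definition \eqref{gbeta} of $B_\mu(x,y;q;\lambda;p;m)$, which yields the claimed identity. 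The mechanism is identical to Proposition \ref{propbeta}; here the series simply telescopes into a single closed form rather than into another family of beta values.

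The one point genuinely requiring justification—and hence the main obstacle—is the exchange of the infinite sum and the integral, because the partial sums converge to an integrand carrying the boundary factor $(1-t)^{y-\frac{3}{2}}$, whose behavior at $t\to 1^-$ could a priori be singular. I would resolve this by invoking the rapid decay of the kernel: as $t\to 1^-$ (and symmetrically as $t\to 0^+$) the argument $\frac{p}{t^m(1-t)^m}\to\infty$, and by the large-argument asymptotic for $R_K(z,-\mu-\tfrac12,q,\lambda)$ recorded in the Introduction the factor behaves like $e^{-\sqrt{q}\,p\,(t(1-t))^{-m}}$, which overwhelms any algebraic growth from $(1-t)^{y-\frac{1}{2}}/(1-t)$ at the endpoints. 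Consequently the partial sums are dominated by a single integrable envelope on $(0,1)$, and monotone or dominated convergence legitimizes pulling the summation inside the integral, completing the argument.
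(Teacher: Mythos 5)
Your proof is correct and is essentially the paper's own argument run in reverse: the paper expands $(1-t)^{y-\frac{3}{2}}=(1-t)^{(y+1)-\frac{3}{2}}\sum_{n=0}^{\infty}t^{n}$ inside the defining integral \eqref{gbeta} and interchanges sum and integral, whereas you collapse the geometric series on the right-hand side back into $(1-t)^{-1}$ — the same identity and the same interchange step. Your explicit justification of the interchange via the exponential decay of $R_K$ at the endpoints is a welcome addition, since the paper merely asserts it by analogy with Proposition \ref{propbeta}.
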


\begin{proof}
By substituting again the formula
\begin{eqnarray*}
(1-t)^{y-1}=(1-t)^y\sum_{n=0}^{\infty}t^n,\;\;(|t|<1),
\end{eqnarray*}
in the right-hand of \eqref{gbeta} and similarly as in the proof of Proposition \ref{propbeta} we get the desired result.
\end{proof}

\begin{lemma}\label{lema1}
Let ${\cal{M}}$ be the Mellin transform operator. Then
\begin{equation}
{\cal{M}}\{R_K(z,-\alpha,q,\lambda),z\to s\}=2^{s-2}\Gamma\left(\frac{s-\alpha}{2}\right)\Gamma\left(\frac{s+\alpha}{2}\right)\Phi\left(\lambda,\frac{s+\alpha}{2},q\right),
\end{equation}
where $0<q\leq 1$, or $-1\leq \lambda<1$, $\Re(s)>|\Re(\alpha)|$ or $\lambda=1$, $\Re(s)>\max(\Re(\alpha),2-\Re(\alpha))$ and $\Phi\left(\lambda,\frac{s+\alpha}{2},q\right)$ stands for the Lerch function (see \cite{gradshteyn},\cite{Fraczek})
\end{lemma}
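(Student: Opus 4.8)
The plan is to substitute the Boudjelkha representation \eqref{boujlkha}, with its index $\alpha$ replaced by $-\alpha$, directly into the Mellin integral and then reverse the order of the two integrations: the inner integral in $z$ will collapse to a Gamma factor, and the surviving integral in the auxiliary variable is precisely the classical integral representation of the Lerch transcendent.

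First I would write, using $(z/2)^{-\alpha}=2^{\alpha}z^{-\alpha}$,
$$
{\cal M}\{R_K(z,-\alpha,q,\lambda),z\to s\}
=\int_0^\infty z^{s-1}R_K(z,-\alpha,q,\lambda)\,dz
=2^{\alpha-1}\int_0^\infty z^{s-1-\alpha}\int_0^\infty\frac{t^{\alpha-1}e^{-qt-z^2/4t}}{1-\lambda e^{-t}}\,dt\,dz .
$$
For parameters in the stated range the double integrand is absolutely integrable, so Fubini's theorem permits interchanging the integrations and moving the $z$-free factor $t^{\alpha-1}e^{-qt}/(1-\lambda e^{-t})$ outside the inner integral.

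Next I would compute the inner integral $\int_0^\infty z^{s-1-\alpha}e^{-z^2/4t}\,dz$ by the substitution $u=z^2/4t$; this turns it into $2^{s-1-\alpha}\,t^{(s-\alpha)/2}\,\Gamma\!\left(\frac{s-\alpha}{2}\right)$, the Gamma integral converging exactly when $\Re(s)>\Re(\alpha)$. Collecting the powers of two, $2^{\alpha-1}\cdot 2^{s-1-\alpha}=2^{s-2}$, and of $t$, $t^{\alpha-1}\cdot t^{(s-\alpha)/2}=t^{(s+\alpha)/2-1}$, the transform reduces to
$$
2^{s-2}\,\Gamma\!\left(\frac{s-\alpha}{2}\right)\int_0^\infty\frac{t^{(s+\alpha)/2-1}e^{-qt}}{1-\lambda e^{-t}}\,dt .
$$
Writing $\nu=(s+\alpha)/2$, I would then invoke the standard integral representation $\Phi(\lambda,\nu,q)=\frac{1}{\Gamma(\nu)}\int_0^\infty t^{\nu-1}e^{-qt}(1-\lambda e^{-t})^{-1}\,dt$ of the Lerch function (cf. \cite{gradshteyn}), giving $\Gamma\!\left(\frac{s+\alpha}{2}\right)\Phi\!\left(\lambda,\frac{s+\alpha}{2},q\right)$ and hence the asserted formula.

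The only delicate point --- and the origin of the two parameter regimes in the statement --- is the behaviour of this last integrand at $t=0$, which governs both the Fubini interchange and the validity of the Lerch representation. When $-1\le\lambda<1$ the denominator tends to $1-\lambda\neq0$, so convergence needs only $\Re(\nu)>0$, i.e. $\Re(s)>-\Re(\alpha)$; together with $\Re(s)>\Re(\alpha)$ this is $\Re(s)>|\Re(\alpha)|$. When $\lambda=1$ one has $1-e^{-t}\sim t$ near the origin, so the integrand behaves like $t^{\nu-2}$ and convergence instead forces $\Re(\nu)>1$, i.e. $\Re(s)>2-\Re(\alpha)$; combined with $\Re(s)>\Re(\alpha)$ this gives $\Re(s)>\max(\Re(\alpha),2-\Re(\alpha))$. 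Establishing absolute integrability to license these steps is the main obstacle; the remaining manipulations are routine.
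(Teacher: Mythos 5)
Your proposal is correct and follows essentially the same route as the paper: substitute the Boudjelkha integral representation with index $-\alpha$, interchange the order of integration, evaluate the inner $z$-integral via $u=z^2/4t$ to produce $2^{s-1-\alpha}t^{(s-\alpha)/2}\Gamma\left(\frac{s-\alpha}{2}\right)$, and recognize the remaining $t$-integral as $\Gamma\left(\frac{s+\alpha}{2}\right)\Phi\left(\lambda,\frac{s+\alpha}{2},q\right)$ by the classical Lerch representation. In fact your write-up is more careful than the paper's, since you also justify the two parameter regimes ($-1\le\lambda<1$ versus $\lambda=1$) governing convergence, which the paper states but never verifies.
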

\begin{proof}
\begin{eqnarray*}
{\cal{M}}\{R_K(z,-\alpha,q,\lambda),z\to s\}&=&\int_{0}^{\infty}z^{s-1}R_K(z,-\alpha,q,\lambda)dz=2^{\alpha-1}\int_{0}^{\infty}z^{s-\alpha-1}\left(\int_{0}^{\infty}t^{\alpha-1}\frac{e^{-qt-z^2/4t}}{1-\lambda e^{-t}}dt\right)dz\nonumber\\
&=&2^{\alpha-1}\int_{0}^{\infty}t^{\alpha-1}\frac{e^{-qt}}{1-\lambda e^{-t}}\left(\int_{0}^{\infty}z^{s-\alpha-1}e^{-z^2/4t}dz\right)dt\nonumber\\
&=&2^{s-2}\Gamma\left(\frac{s-\alpha}{2}\right)\int_{0}^{\infty}t^{\frac{s+\alpha}{2}-1}\frac{e^{-qt}}{1-\lambda e^{-t}}dt\nonumber\\
&=&2^{s-2}\Gamma\left(\frac{s-\alpha}{2}\right)\Gamma\left(\frac{s+\alpha}{2}\right)\Phi\left(\lambda,\frac{s+\alpha}{2},q\right).
\end{eqnarray*}
\end{proof}

\begin{proposition}[Mellin transform]\label{proMelin} The following expression holds true
\begin{equation}
{\cal{M}}\{B_\mu(x, y;q;\lambda; p;m),p\to s\}=\frac{2^{s-1}}{\sqrt{\pi}}B\left(x+ms+\frac{m-1}{2},y+ms+\frac{m-1}{2}\right)\Gamma\left(\frac{s-\mu}{2}\right)\Gamma\left(\frac{s+\mu+1}{2}\right)\Phi\left(\lambda,\frac{s+\mu+1}{2},q\right),
\end{equation}
where $x,y\in\mathbb{C}$, $m>0$ and
$$0<q\leq1,\,or\,-1\leq\lambda<1,\,\Re(s)>\max\left\{\Re(\mu),-1-\Re(\mu),-\frac{1}{2}+\frac{1}{2m}-\frac{\Re(x)}{m},-\frac{1}{2}+\frac{1}{2m}-\frac{\Re(y)}{m}\right\},\,$$ $$or\,\lambda=1,\,\Re(s)>\max\left\{\Re(\mu),1-\Re(\mu),-\frac{1}{2}+\frac{1}{2m}-\frac{\Re(x)}{m},-\frac{1}{2}+\frac{1}{2m}-\frac{\Re(y)}{m}\right\}.$$
\end{proposition}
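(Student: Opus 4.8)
The plan is to substitute the definition \eqref{gbeta} into the Mellin transform, interchange the order of integration, and reduce the inner integral to the Mellin transform of $R_K$ already recorded in Lemma \ref{lema1}. First I would pull the factor $\sqrt{2/\pi}\,p^{1/2}$ out of the $t$-integral and write
\begin{equation*}
{\cal{M}}\{B_\mu(x, y;q;\lambda; p;m),p\to s\}=\sqrt{\frac{2}{\pi}}\int_{0}^{\infty}p^{s-\frac{1}{2}}\int_{0}^{1}t^{x-\frac{3}{2}}(1-t)^{y-\frac{3}{2}}R_K\left(\frac{p}{t^m(1-t)^m},-\mu-\frac{1}{2},q,\lambda\right)dt\,dp.
\end{equation*}
Then I would appeal to Fubini's theorem to exchange the two integrations. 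This is the main obstacle: one must check absolute convergence of the double integral, and it is precisely here that the parameter restrictions in the statement are forced. Once the order is reversed, the inner integral is over $p$ for fixed $t$, and the substitution $z=p/(t^m(1-t)^m)$ turns it into
\begin{equation*}
\int_{0}^{\infty}p^{s-\frac{1}{2}}R_K\left(\frac{p}{t^m(1-t)^m},-\mu-\frac{1}{2},q,\lambda\right)dp=(t^m(1-t)^m)^{s+\frac{1}{2}}\int_{0}^{\infty}z^{s-\frac{1}{2}}R_K\left(z,-\mu-\frac{1}{2},q,\lambda\right)dz,
\end{equation*}
so the $z$-integral is exactly ${\cal{M}}\{R_K(z,-\mu-\frac{1}{2},q,\lambda),z\to s+\frac{1}{2}\}$.

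Next I would invoke Lemma \ref{lema1} with Mellin variable $s+\frac{1}{2}$ and index $\alpha=\mu+\frac{1}{2}$. A short computation gives $\frac{(s+\frac{1}{2})-\alpha}{2}=\frac{s-\mu}{2}$ and $\frac{(s+\frac{1}{2})+\alpha}{2}=\frac{s+\mu+1}{2}$, so the $z$-integral equals $2^{s-\frac{3}{2}}\Gamma\left(\frac{s-\mu}{2}\right)\Gamma\left(\frac{s+\mu+1}{2}\right)\Phi\left(\lambda,\frac{s+\mu+1}{2},q\right)$. The hypotheses of the lemma, namely $\Re(s+\frac{1}{2})>|\Re(\mu+\frac{1}{2})|$ for $-1\le\lambda<1$ and $\Re(s+\frac{1}{2})>\max(\Re(\mu+\frac{1}{2}),2-\Re(\mu+\frac{1}{2}))$ for $\lambda=1$, collapse after removing the absolute value into the bounds $\Re(s)>\max\{\Re(\mu),-1-\Re(\mu)\}$ and $\Re(s)>\max\{\Re(\mu),1-\Re(\mu)\}$ appearing in the proposition.

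Finally, the factors produced by the $z$-integral are constant in $t$ and come out, leaving a beta-type integral:
\begin{equation*}
\int_{0}^{1}t^{x+ms+\frac{m-1}{2}-1}(1-t)^{y+ms+\frac{m-1}{2}-1}dt=B\left(x+ms+\frac{m-1}{2},y+ms+\frac{m-1}{2}\right),
\end{equation*}
where I have absorbed $(t^m(1-t)^m)^{s+\frac{1}{2}}$ and simplified the exponents $x-\frac{3}{2}+m(s+\frac{1}{2})=x+ms+\frac{m-1}{2}-1$ (and likewise for $y$). This integral converges exactly when $\Re\left(x+ms+\frac{m-1}{2}\right)>0$ and $\Re\left(y+ms+\frac{m-1}{2}\right)>0$, which rearrange into the remaining lower bounds $\Re(s)>-\frac{1}{2}+\frac{1}{2m}-\frac{\Re(x)}{m}$ and $\Re(s)>-\frac{1}{2}+\frac{1}{2m}-\frac{\Re(y)}{m}$. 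Combining the constant prefactors through $\sqrt{2/\pi}\cdot2^{s-\frac{3}{2}}=2^{s-1}/\sqrt{\pi}$ then yields the stated identity. I expect the only genuinely delicate point to be the Fubini justification together with the bookkeeping of the four convergence conditions; the parameter matching itself is routine.
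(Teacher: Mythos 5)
Your proposal is correct and follows essentially the same route as the paper's proof: substitute the definition of $B_\mu$, interchange the order of integration, rescale the inner $p$-integral to recognize the Mellin transform of $R_K$ at $s+\tfrac{1}{2}$, apply Lemma \ref{lema1} with $\alpha=\mu+\tfrac{1}{2}$, and evaluate the remaining $t$-integral as a classical beta function. In fact you are somewhat more careful than the paper, which performs the Fubini interchange formally and never traces how the four convergence conditions arise; your bookkeeping of the shifted lemma hypotheses and of the beta-integral constraints supplies exactly that missing justification.
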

\begin{proof}
\begin{eqnarray*}
{\cal{M}}\{B_\mu(x, y;q;\lambda; p;m),p\to s\}&=&\int_{0}^{\infty}p^{s-1}B_\mu(x, y;q;\lambda; p;m)dp\nonumber\\
&=&\int_{0}^{\infty}p^{s-1}\sqrt{\frac{2p}{\pi}}\left(\int_{0}^{1}t^{x-\frac{3}{2}}(1-t)^{y-\frac{3}{2}}R_K\left(\frac{p}{t^m(1-t)^m},-\mu-\frac{1}{2},q,\lambda\right)dt\right)dp\nonumber\\
&=&\sqrt{\frac{2}{\pi}}\int_{0}^{1}t^{x-\frac{3}{2}}(1-t)^{y-\frac{3}{2}}\left(\int_{0}^{\infty}p^{s+\frac{1}{2}-1}R_K\left(\frac{p}{t^m(1-t)^m},-\mu-\frac{1}{2},q,\lambda\right)dp\right)dt\nonumber\\
&=&\sqrt{\frac{2}{\pi}}\int_{0}^{1}t^{x+m(s+\frac{1}{2})-\frac{3}{2}}(1-t)^{y+m(s+\frac{1}{2})-\frac{3}{2}}dt\int_{0}^{\infty}u^{s+\frac{1}{2}-1}R_K\left(u,-\mu-\frac{1}{2},q,\lambda\right)du\nonumber\\
&=&\sqrt{\frac{2}{\pi}}B\left(x+ms+\frac{m-1}{2},y+ms+\frac{m-1}{2}\right)\int_{0}^{\infty}u^{s+\frac{1}{2}-1}R_K\left(u,-\mu-\frac{1}{2},q,\lambda\right)du.
\end{eqnarray*}

Finally, by using  Lemma \ref{lema1} we get the desired result.
\end{proof}

\section{Extended Gauss hypergeometric and confluent hypergeometric  functions}

We use the generalized  extended beta function \eqref{gbeta} to extend hypergeometric and confluent hypergeometric functions, respectively, as follows:

\begin{definition}\label{Defin}
The extended Gauss hypergeometric function $F_\mu(a,b;c;z;q;\lambda; p;m)$ and the confluent hypergeometric function  $\Phi_\mu(b;c;z;q;\lambda; p;m)$ are respectively defined by
\begin{eqnarray}\label{gbeta2}
F_\mu(a,b;c;z;q;\lambda; p;m)=\sum_{n=0}^{\infty}(a)_n\frac{B_\mu(b+n, c-b;q;\lambda; p;m)}{B(b, c-b)}\frac{z^n}{n!}
\end{eqnarray}
($|z|<1$, $\Re(c)>\Re(b)>0$, $0<q\leq 1$,  $-1\leq \lambda\leq 1$, $m > 0$, $\Re(p) > 0$).
\begin{eqnarray}
\Phi_\mu(b;c;z;q;\lambda; p;m)=\sum_{n=0}^{\infty}\frac{B_\mu(b+n, c-b;q;\lambda; p;m)}{B(b, c-b)}\frac{z^n}{n!}
\end{eqnarray}
($z\in\mathbb{C}$, $\Re(c)>\Re(b)>0$,  $-1\leq \lambda\leq 1$, $m > 0$, $\Re(p) > 0$).
\end{definition}
\begin{remark}
Taking $\lambda = 0$ and $q=1$, \eqref{gbeta2} reduces to the  extended Gauss hypergeometric function  defined by Agarwal et al. \cite[Definition 2.8]{agarwal}.

\end{remark}

\begin{proposition}[Integral representation]
1. The following integral representation for the extended Gauss hypergeometric function $F_\mu(a,b;c;z;q;\lambda; p;m)$ is valid
\begin{equation}\label{intrep1}
F_\mu(a,b;c;z;q;\lambda; p;m)=\sqrt{\frac{2p}{\pi}}\frac{1}{B(b,c-b)}\int_0^1 t^{b-\frac{3}{2}}(1-t)^{c-b-\frac{3}{2}}(1-zt)^{-a}R_K\left(\frac{p}{t^m(1-t)^m},-\mu-\frac{1}{2},q,\lambda\right)dt,
\end{equation}
($\arg(1-z)<\pi$, $\Re(c)>\Re(b)>0$, $0<q\leq 1$,  $-1\leq \lambda\leq 1$, $m > 0$, $\Re(p) > 0$).

2. The following integral representation for the extended confluent hypergeometric function $\Phi_\mu(b;c;z;q;\lambda; p;m)$ is valid
\begin{equation}\label{intrep2}
\Phi_\mu(b;c;z;q;\lambda; p;m)=\sqrt{\frac{2p}{\pi}}\frac{1}{B(b,c-b)}\int_0^1 t^{b-\frac{3}{2}}(1-t)^{c-b-\frac{3}{2}}e^{zt}R_K\left(\frac{p}{t^m(1-t)^m},-\mu-\frac{1}{2},q,\lambda\right)dt,
\end{equation}
( $\Re(c)>\Re(b)>0$, $0<q\leq 1$,  $-1\leq \lambda\leq 1$, $m > 0$, $\Re(p) > 0$).

\end{proposition}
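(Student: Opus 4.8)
The plan is to start from the series definitions in Definition \ref{Defin} and substitute the integral representation \eqref{gbeta} of the generalized extended beta function into each summand. For part 1, replacing the $n$-th beta factor by
$$B_\mu(b+n, c-b;q;\lambda; p;m)=\sqrt{\frac{2p}{\pi}}\int_{0}^{1}t^{b+n-\frac{3}{2}}(1-t)^{c-b-\frac{3}{2}}R_K\left(\frac{p}{t^m(1-t)^m},-\mu-\frac{1}{2},q,\lambda\right)dt$$
and isolating the factor $t^n$, the $n$-dependence collapses into the single power series $\sum_{n\geq 0}(a)_n (zt)^n/n!$. I would then sum this in closed form via the binomial expansion $(1-w)^{-a}=\sum_{n\geq 0}(a)_n w^n/n!$ with $w=zt$, which is valid for $|w|<1$. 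For part 2 the Pochhammer factor $(a)_n$ is absent, so the inner series is the exponential series $\sum_{n\geq 0}(zt)^n/n!=e^{zt}$.

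First I would justify interchanging the summation with the integral, which is the only genuine technical point; everything else is mechanical. The key input is the endpoint behaviour of $R_K$ recalled in the introduction: as $t\to 0^+$ or $t\to 1^-$ the argument $w:=p/(t^m(1-t)^m)\to\infty$, and $R_K(w,-\mu-\frac{1}{2},q,\lambda)\sim\sqrt{\pi/(2w)}\,e^{-\sqrt{q}\,w}$ (up to an algebraic factor) decays exponentially, so it absorbs the algebraic singularities $t^{b-\frac{3}{2}}$ and $(1-t)^{c-b-\frac{3}{2}}$ and makes $t^{b-\frac{3}{2}}(1-t)^{c-b-\frac{3}{2}}R_K(\cdots)$ integrable on $(0,1)$. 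On the other hand, for $|z|<1$ and $t\in[0,1]$ one has $|zt|\leq|z|<1$, so the partial sums of $\sum_{n\geq 0}(a)_n (zt)^n/n!$ are uniformly bounded on $[0,1]$; combining these two facts furnishes an integrable majorant independent of $n$, and dominated convergence (applied to the partial sums) licenses the interchange.

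With the interchange secured, I would write
$$F_\mu(a,b;c;z;q;\lambda; p;m)=\frac{1}{B(b,c-b)}\sqrt{\frac{2p}{\pi}}\int_{0}^{1}t^{b-\frac{3}{2}}(1-t)^{c-b-\frac{3}{2}}R_K\left(\frac{p}{t^m(1-t)^m},-\mu-\frac{1}{2},q,\lambda\right)\left(\sum_{n=0}^{\infty}(a)_n\frac{(zt)^n}{n!}\right)dt,$$
and replace the inner sum by $(1-zt)^{-a}$ to obtain \eqref{intrep1}. The stated range $\arg(1-z)<\pi$ is then recovered by analytic continuation in $z$: both sides are holomorphic on $\mathbb{C}\setminus[1,\infty)$ and agree on the disk $|z|<1$. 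Part 2 follows line for line, the only change being that the exponential series $\sum_{n\geq 0}(zt)^n/n!=e^{zt}$ converges for every $z\in\mathbb{C}$, so the interchange — and hence \eqref{intrep2} — holds without any restriction on $z$. The main obstacle, the uniform integrability needed for the interchange, is thus entirely resolved by the exponential endpoint decay of $R_K$.
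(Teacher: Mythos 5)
Your proof is correct and follows essentially the same route as the paper: substitute the integral representation \eqref{gbeta} of $B_\mu$ into the series definition from Definition \ref{Defin} and sum the binomial (resp.\ exponential) series under the integral. The paper's own proof states exactly this interchange in one line, so your justification of the interchange via the exponential endpoint decay of $R_K$ and the analytic-continuation remark for $\arg(1-z)<\pi$ simply supply details the paper omits.
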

\begin{proof}
1. By using \eqref{gbeta} and the generalized binomial expansion
\begin{equation}\label{gbeta1}
(1-zt)^{-a}=\sum_{n=0}^{\infty}(a)_n\frac{(zt)^n}{n!},\;\;(|zt|<1),
\end{equation}
we get the required result.

2. Similarly as in the proof of 1.
\end{proof}
\begin{proposition}[Differentiation formula]
(a) For $n\in \mathbb{N}$,
\begin{equation}\label{difform1}
\frac{d^n}{dz^n}\{F_\mu(a,b;c;z;q;\lambda; p;m)\}=\frac{(a)_n(b)_n}{(c)_n}F_\mu(a+n,b+n;c+n;z;q;\lambda; p;m),
\end{equation}
($|z|<1$, $\Re(c)>\Re(b)>0$, $0<q\leq 1$,  $-1\leq \lambda\leq 1$, $m > 0$, $\Re(p) > 0$).

(b) For $n\in \mathbb{N}$,
\begin{equation}
\frac{d^n}{dz^n}\{\Phi_\mu(b;c;z;q;\lambda; p;m)\}=\frac{(b)_n}{(c)_n}\Phi_\mu(b+n;c+n;z;q;\lambda; p;m),
\end{equation}
($z\in\mathbb{C}$, $\Re(c)>\Re(b)>0$, $0<q\leq 1$,  $-1\leq \lambda\leq 1$, $m > 0$, $\Re(p) > 0$).
\end{proposition}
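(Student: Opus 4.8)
The plan is to prove part (a) first for $n=1$ by differentiating the defining series \eqref{gbeta2} term by term, and then to obtain the general case by a short induction; part (b) will follow by the identical argument with the factor $(a)_n$ suppressed. Since the series \eqref{gbeta2} converges uniformly on compact subsets of the disk $|z|<1$, term-by-term differentiation is legitimate. Differentiating, the $k=0$ term drops out, and after the index shift $k\mapsto k+1$ together with the elementary identity $(a)_{k+1}=a\,(a+1)_k$, one is left with
\begin{equation*}
\frac{d}{dz}F_\mu(a,b;c;z;q;\lambda;p;m)=a\sum_{k=0}^{\infty}(a+1)_k\,\frac{B_\mu(b+k+1,c-b;q;\lambda;p;m)}{B(b,c-b)}\frac{z^k}{k!}.
\end{equation*}

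The key observation is that the second beta argument $c-b$ is invariant under the simultaneous shift $(b,c)\mapsto(b+1,c+1)$, so the numerators $B_\mu(b+k+1,c-b;\dots)$ are exactly the coefficients appearing in $F_\mu(a+1,b+1;c+1;z;\dots)$. It therefore remains only to reconcile the normalizing beta functions. Using $B(x,y)=\Gamma(x)\Gamma(y)/\Gamma(x+y)$ together with $\Gamma(b+1)=b\,\Gamma(b)$ and $\Gamma(c+1)=c\,\Gamma(c)$, one finds the ratio $B(b+1,c-b)/B(b,c-b)=b/c$. Substituting $1/B(b,c-b)=(b/c)\cdot 1/B(b+1,c-b)$ into the last display recasts the sum as $(ab/c)\,F_\mu(a+1,b+1;c+1;z;\dots)$, which is precisely \eqref{difform1} for $n=1$.

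For the inductive step I would assume the formula for some $n$ and apply the $n=1$ result to $F_\mu(a+n,b+n;c+n;z;\dots)$, which raises each of the three parameters by one and contributes a factor $(a+n)(b+n)/(c+n)$. Combining this with $(a)_n(a+n)=(a)_{n+1}$, and likewise for $b$ and $c$, yields $(a)_{n+1}(b)_{n+1}/(c)_{n+1}$ and closes the induction. Part (b) is handled in exactly the same manner: differentiating the defining series of $\Phi_\mu$, the only change is the absence of the $(a)_n$ factor, so the base case reads $\frac{d}{dz}\Phi_\mu(b;c;z;\dots)=(b/c)\,\Phi_\mu(b+1;c+1;z;\dots)$ and the induction produces the coefficient $(b)_n/(c)_n$.

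I do not anticipate a genuine obstacle here; the computation is routine. The only points requiring a moment's care are the justification of term-by-term differentiation via uniform convergence of \eqref{gbeta2} on compact subsets of $|z|<1$, and the bookkeeping of the Pochhammer and beta-function ratios. Alternatively, one could differentiate the integral representation \eqref{intrep1} under the integral sign, using $\frac{d}{dz}(1-zt)^{-a}=a\,t\,(1-zt)^{-a-1}$; the extra factor of $t$ shifts $b\mapsto b+1$ in the integrand, the invariance of $c-b$ again matches the parameters of $F_\mu(a+1,b+1;c+1;z;\dots)$, and the same beta ratio $b/c$ emerges, giving an arguably cleaner derivation of the base case.
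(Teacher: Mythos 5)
Your proof is correct and follows essentially the same route as the paper's: term-by-term differentiation of the defining series, the index shift together with $(a)_{k+1}=a\,(a+1)_k$ and the beta ratio $B(b+1,c-b)/B(b,c-b)=b/c$ to obtain the $n=1$ case, then iteration (which you make explicit as an induction, where the paper simply says "repeatedly"). The alternative derivation you sketch via the integral representation \eqref{intrep1} is a valid bonus but not what the paper does.
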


\begin{proof}
(a) For $n=1$, we have
\begin{eqnarray}
\frac{d}{dz}\{F_\mu(a,b;c;z;q;\lambda; p;m)\}&=&\sum_{n=1}^{\infty}(a)_n\frac{B_\mu(b+n, c-b;q;\lambda; p;m)}{B(b, c-b)}\frac{z^{n-1}}{(n-1)!}\nonumber\\
&=&\sum_{n=0}^{\infty}(a)_{n+1}\frac{B_\mu(b+n+1, c-b;q;\lambda; p;m)}{B(b, c-b)}\frac{z^{n}}{n!}.\label{dif}
\end{eqnarray}

Using identities $B(b,c-b)=\frac{c}{b}B(b+1,c-b)$ and $(a)_{n+1}=a(a+1)_n$ in \eqref{dif}, we get
\begin{eqnarray}
\frac{d}{dz}\{F_\mu(a,b;c;z;q;\lambda; p;m)\}&=&\frac{ab}{c}\sum_{n=0}^{\infty}(a+1)_n\frac{B_\mu(b+n+1, c-b;q;\lambda; p;m)}{B(b+1, c-b)}\frac{z^{n}}{n!}\nonumber\\
&=&\frac{ab}{c}F_\mu(a+1,b+1;c+1;z;q;\lambda; p;m),
\end{eqnarray}
and hence
\begin{equation}\label{dif2}
\frac{d}{dz}\{F_\mu(a,b;c;z;q;\lambda; p;m)\}=\frac{ab}{c}F_\mu(a+1,b+1;c+1;z;q;\lambda; p;m).
\end{equation}

Then, by using \eqref{dif2} repeatedly, we get \eqref{difform1}.

The proof of part (b) is similar as that of  part (a).
\end{proof}

\begin{proposition}[Transformation formulas]\text{}

1. For $\arg(1-z)<\pi$, we have
\begin{equation}
F_\mu(a,b;c;z;q;\lambda; p;m)=(1-z)^{-a}F_\mu(a,c-b;c;\frac{z}{z-1};q;\lambda; p;m),
\end{equation}
( $\Re(c)>\Re(b)>0$, $0<q\leq 1$,  $-1\leq \lambda\leq 1$, $m > 0$, $\Re(p) > 0$).

2. \begin{equation}
\Phi_\mu(b;c;z;q;\lambda; p;m)=e^{z}\Phi_\mu(c-b;c;-z;q;\lambda; p;m),
\end{equation}
($z\in\mathbb{C}$, $\Re(c)>\Re(b)>0$, $0<q\leq 1$,  $-1\leq \lambda\leq 1$, $m > 0$, $\Re(p) > 0$).

\end{proposition}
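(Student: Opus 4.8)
The plan is to derive both identities directly from the integral representations \eqref{intrep1} and \eqref{intrep2}, exploiting the fact that the argument $\frac{p}{t^m(1-t)^m}$ of the kernel $R_K$ is invariant under the reflection $t\mapsto 1-t$, since $t^m(1-t)^m=(1-t)^m t^m$. This symmetry is exactly what allows the weight $R_K\!\left(\frac{p}{t^m(1-t)^m},-\mu-\frac{1}{2},q,\lambda\right)$ to pass through the change of variable untouched, so that each transformation reduces to the classical manipulation used for the Pfaff identity of ${}_2F_1$ and the Kummer identity of ${}_1F_1$.

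For part~1, I would start from \eqref{intrep1} and perform the substitution $u=1-t$. Under this change $t^{b-\frac{3}{2}}$ becomes $(1-u)^{b-\frac{3}{2}}$, the factor $(1-t)^{c-b-\frac{3}{2}}$ becomes $u^{c-b-\frac{3}{2}}$, and the binomial factor $(1-zt)^{-a}$ becomes $(1-z+zu)^{-a}$, which I would rewrite as $(1-z)^{-a}\bigl(1-\frac{z}{z-1}\,u\bigr)^{-a}$; the kernel is unchanged by the symmetry noted above. Comparing the resulting integral with \eqref{intrep1} evaluated at the parameters $\bigl(a,c-b;c;\frac{z}{z-1}\bigr)$, and using $B(c-b,b)=B(b,c-b)$, I would recognize it as $(1-z)^{-a}F_\mu\bigl(a,c-b;c;\frac{z}{z-1};q;\lambda;p;m\bigr)$, which is precisely the claimed identity.

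Part~2 follows the same route from \eqref{intrep2}: the substitution $u=1-t$ turns $e^{zt}$ into $e^{z}e^{-zu}$, interchanges the two power factors, and again leaves the kernel invariant, so that the integral becomes $e^{z}$ times the integral representation of $\Phi_\mu(c-b;c;-z;q;\lambda;p;m)$, yielding the stated Kummer-type relation.

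The only point requiring care is the legitimacy of factoring out $(1-z)^{-a}$ as a single-valued branch, which is guaranteed by the hypothesis $\arg(1-z)<\pi$; convergence of the transformed integral is inherited from that of \eqref{intrep1}–\eqref{intrep2} since neither the power weights nor the kernel is altered in magnitude by $t\mapsto 1-t$. There is thus no serious analytic obstacle once the integral representations are in hand: the decisive ingredient is the reflection symmetry of the $R_K$-argument, and it is exactly this symmetry that makes the extended identities mirror their classical counterparts verbatim.
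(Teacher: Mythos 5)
Your proof is correct and takes essentially the same approach as the paper: the paper's entire proof is the one-line instruction ``Replacing $t$ by $1-t$ in the integral representations \eqref{intrep1} and \eqref{intrep2}.'' You have simply supplied the details the paper leaves implicit --- the invariance of the kernel argument $t^m(1-t)^m$ under the reflection, the factorization $(1-z+zu)^{-a}=(1-z)^{-a}\bigl(1-\frac{z}{z-1}u\bigr)^{-a}$, the symmetry $B(c-b,b)=B(b,c-b)$, and the branch condition on $(1-z)^{-a}$.
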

\begin{proof}
 Replacing $t$ by $1-t$ in the integral representations  \eqref{intrep1} and \eqref{intrep2}.
\end{proof}

\section{Extended Appell and Lauricella hypergeometric   functions}

\begin{definition}
Extended Appell hypergeometric functions $F_{1,\mu}$, $F_{2,\mu}$  and the Lauricella hypergeometric function $F_{D,\mu}^3$ are, respectively, defined by
\begin{equation}\label{gbeta3}
F_{1,\mu}(a,b,c;d;x,y;q;\lambda; p;m)=\sum_{n,k=0}^{\infty}(b)_n(c)_k\frac{B_\mu(a+n+k, d-a;q;\lambda; p;m)}{B(a, d-a)}\frac{x^n}{n!}\frac{y^k}{k!},
\end{equation}
($|x|<1$, $|y|<1$, $\Re(d)>\Re(a)>0$, $0<q\leq 1$,  $-1\leq \lambda\leq 1$, $m > 0$, $\Re(p) > 0$).

\begin{equation}\label{gbeta5}
F_{2,\mu}(a,b,c;d,e;x,y;q;\lambda; p;m)=\sum_{n,k=0}^{\infty}(a)_{n+k}\frac{B_\mu(b+n, d-b;q;\lambda; p;m)}{B(b, d-b)}\frac{B_\mu(c+k, e-c;q;\lambda; p;m)}{B(c, e-c)}\frac{x^n}{n!}\frac{y^k}{k!},
\end{equation}
($|x|+|y|<1$, $\Re(d)>\Re(b)>0$,  $\Re(e)>\Re(c)>0$, $0<q\leq 1$,  $-1\leq \lambda\leq 1$, $m > 0$, $\Re(p) > 0$).

\begin{equation}\label{gbeta6}
F_{D,\mu}^3(a,b,c,d;e;x,y,z;q;\lambda; p;m)=\sum_{n,k,r=0}^{\infty}(b)_n(c)_k(d)_r\frac{B_\mu(a+n+k+r, e-a;q;\lambda; p;m)}{B(a, e-a)}\frac{x^n}{n!}\frac{y^k}{k!}\frac{z^r}{r!},
\end{equation}
($|x|<1$, $|y|<1$, $|z|<1$, $\Re(e)>\Re(a)>0$, $0<q\leq 1$,  $-1\leq \lambda\leq 1$, $m > 0$, $\Re(p) > 0$).
\end{definition}
\begin{remark}
Taking $\lambda = 0$ and $q=1$, \eqref{gbeta3}, \eqref{gbeta5} and \eqref{gbeta6} are reduced to extended Appell hypergeometric functions $F_{1,\mu}$, $F_{2,\mu}$  and the Lauricella hypergeometric function $F_{D,\mu}^3,$  defined by Agarwal et al. \cite[Definitions 2.9, 2.10,2.11]{agarwal}.
\end{remark}

\begin{proposition}[Integral representation]
 The following integral representations for the extended Appell hypergeometric functions $F_{1,\mu}$, $F_{2,\mu}$  and the Lauricella hypergeometric function $F_{D,\mu}^3$ are, respectively, valid
\begin{equation}\label{intrep11}
F_{1,\mu}(a,b,c;d;x,y;q;\lambda; p;m)=\sqrt{\frac{2p}{\pi}}\frac{1}{B(a,d-a)}\int_0^1 t^{a-\frac{3}{2}}(1-t)^{d-a-\frac{3}{2}}(1-xt)^{-b}(1-yt)^{-c}R_K\left(\frac{p}{t^m(1-t)^m},-\mu-\frac{1}{2},q,\lambda\right)dt,
\end{equation}

\begin{eqnarray}\label{intrep12}
F_{2,\mu}(a,b,c;d;x,y;q;\lambda; p;m)=\frac{2p}{\pi}\frac{1}{B(b,d-b)B(c,e-c)}\int_0^1 \int_{0}^{1}t^{b-\frac{3}{2}}(1-t)^{d-b-\frac{3}{2}}w^{b-\frac{3}{2}}(1-w)^{e-c-\frac{3}{2}}(1-xt-yw)^{-a}\nonumber\\
\times R_K\left(\frac{p}{t^m(1-t)^m},-\mu-\frac{1}{2},q,\lambda\right)R_K\left(\frac{p}{w^m(1-w)^m},-\mu-\frac{1}{2},q,\lambda\right)dtdw,
\end{eqnarray}

\begin{eqnarray}\label{intrep13}
F_{D,\mu}^3(a,b,c,d;e;x,y,z;q;\lambda; p;m)=\sqrt{\frac{2p}{\pi}}\frac{1}{B(a,e-a)}\int_0^1 t^{a-\frac{3}{2}}(1-t)^{e-a-\frac{3}{2}}(1-xt)^{-b}(1-yt)^{-c}(1-zt)^{-d}\nonumber\\\times R_K\left(\frac{p}{t^m(1-t)^m},-\mu-\frac{1}{2},q,\lambda\right)dt.
\end{eqnarray}

\end{proposition}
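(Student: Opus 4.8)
The plan is to establish all three representations by one and the same device: substitute the integral definition \eqref{gbeta} of the generalized extended beta function into each defining series, interchange summation and integration, and then collapse the resulting power series in $t$ (and $w$) by repeated use of the generalized binomial expansion \eqref{gbeta1}. All three identities are, in this sense, the multivariate analogues of the single-variable integral representation \eqref{intrep1}.

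First I would treat the two ``decoupled'' cases $F_{1,\mu}$ and $F_{D,\mu}^3$, where the Pochhammer symbols factor across the summation indices. Starting from \eqref{gbeta3}, I replace $B_\mu(a+n+k,d-a;q;\lambda;p;m)$ by
\[
\sqrt{\frac{2p}{\pi}}\int_{0}^{1}t^{a+n+k-\frac{3}{2}}(1-t)^{d-a-\frac{3}{2}}R_K\left(\frac{p}{t^m(1-t)^m},-\mu-\frac{1}{2},q,\lambda\right)dt,
\]
pull the common factor $\sqrt{2p/\pi}\,/B(a,d-a)$ and the $R_K$-kernel outside, and reduce the double sum to $\sum_{n,k}(b)_n(c)_k\frac{(xt)^n}{n!}\frac{(yt)^k}{k!}=(1-xt)^{-b}(1-yt)^{-c}$ by two applications of \eqref{gbeta1}; this yields \eqref{intrep11}. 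The argument for $F_{D,\mu}^3$ is identical with a triple sum: inserting the integral representation of $B_\mu(a+n+k+r,e-a)$ from \eqref{gbeta6} and factoring $\sum_n(b)_n(xt)^n/n!$, $\sum_k(c)_k(yt)^k/k!$, $\sum_r(d)_r(zt)^r/r!$ produces $(1-xt)^{-b}(1-yt)^{-c}(1-zt)^{-d}$, establishing \eqref{intrep13}.

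For $F_{2,\mu}$ the factor $(a)_{n+k}$ in \eqref{gbeta5} couples the two indices, so a single integral will not suffice. Instead I would insert the integral representation of $B_\mu(b+n,d-b)$ in a variable $t$ and that of $B_\mu(c+k,e-c)$ in a separate variable $w$, yielding a double integral with prefactor $(2p/\pi)\,/\bigl(B(b,d-b)B(c,e-c)\bigr)$ and the two kernels $R_K(p/(t^m(1-t)^m),\cdot)$ and $R_K(p/(w^m(1-w)^m),\cdot)$. The remaining coupled sum $\sum_{n,k}(a)_{n+k}\frac{(xt)^n}{n!}\frac{(yw)^k}{k!}$ I would evaluate by grouping terms with $n+k=N$ and recognizing the multinomial identity $(1-xt-yw)^{-a}=\sum_{N\ge 0}(a)_N\frac{(xt+yw)^N}{N!}$; expanding $(xt+yw)^N$ reproduces exactly the coupled double sum, giving the factor $(1-xt-yw)^{-a}$ and hence \eqref{intrep12}.

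The main obstacle, and the only step requiring genuine care, is the rigorous justification of interchanging summation and integration in each case. On the stated domains one has $0\le t,w\le 1$ together with $|x|,|y|<1$ (resp. $|z|<1$, and $|x|+|y|<1$ for $F_{2,\mu}$), so the binomial series are dominated by the convergent majorants $\sum_n|(b)_n|\,|x|^n/n!$, etc., and converge uniformly in $t$ (and $w$); meanwhile the extended beta integrals converge absolutely under the parameter constraints $\Re(d)>\Re(a)>0$ (respectively $\Re(d)>\Re(b)>0$ and $\Re(e)>\Re(c)>0$), so the hypotheses of Fubini--Tonelli are met and the interchange is legitimate. Once this is secured, the algebraic collapse via \eqref{gbeta1} is entirely routine.
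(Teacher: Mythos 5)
Your proposal is correct and takes essentially the same route as the paper: the paper's proof consists only of deferring to Theorems 2.13, 2.15 and 2.16 of Agarwal et al., and those proofs proceed exactly as you do, by inserting the integral form of $B_\mu$ into each defining series, interchanging summation and integration, and collapsing the sums via the binomial expansion --- including the coupled identity $\sum_{n,k}(a)_{n+k}\frac{(xt)^n}{n!}\frac{(yw)^k}{k!}=(1-xt-yw)^{-a}$ needed for $F_{2,\mu}$. Your write-up in fact supplies the convergence/Fubini details that the paper leaves entirely to the citation.
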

\begin{proof}
The proofs are very similar to those of Theorems 2.13,  2.15 and  2.16 in \cite{agarwal}.
\end{proof}

\section{The generalized extended  Riemann-Liouville fractional derivative operator}
The classical Riemann-Liouville fractional derivative operator is defined by
\begin{equation}\label{classical}
    D_{z}^{\delta}f(z):=\frac{1}{\Gamma(-\delta)}\int_{0}^{z}(z-t)^{-\delta-1}f(t)dt,
\end{equation}
where $\Re(\delta)<0$. It coincides with the fractional integral of order $-\delta$. In the case $n-1<\Re(\delta)<n$, $n\in\mathbb{N}$, we write
\begin{equation}
    D_{z}^{\delta}f(z):=\frac{d^{n}}{dz^{n}}D_{z}^{\delta-n}f(z)=\frac{d^{n}}{dz^{n}}\left\{\frac{1}{\Gamma(n-\delta)}\int_{0}^{z}(z-t)^{n-\delta-1}f(t)dt\right\}.
\end{equation}

\begin{definition}\label{definito}
The  generalized extended Riemann-Liouville fractional derivative is defined as follows
\begin{equation}\label{definition1}
    D_{z}^{\delta,\mu;p;q;\lambda;m}f(z):=\frac{1}{\Gamma(-\delta)}\sqrt{\frac{2p}{\pi}}\int_{0}^{z}(z-t)^{-\delta-1}
    f(t)R_{K}\left(\frac{pz^{2m}}{t^{m}(z-t)^{m}},-\mu-\frac{1}{2},q,\lambda\right)dt,
\end{equation}
where $\Re(\delta)<0$, $\Re(p)>0$, $\Re(m)>0$, $\Re(\mu)\geq 0$ and $0<q\leq1$, $-1\leq\lambda\leq1$.

For $n-1<\Re(\delta)<n$, $n\in\mathbb{N},$ we have
\begin{equation}\label{definition12}
  D_{z}^{\delta,\mu;p;q;\lambda;m}f(z):=\frac{d^{n}}{dz^{n}}D_{z}^{\delta-n,\mu;p;q;\lambda;m}f(z)=\frac{d^{n}}{dz^{n}}\left\{\frac{1}{\Gamma(n-\delta)}\sqrt{\frac{2p}{\pi}}\int_{0}^{z}(z-t)^{n-\delta-1}f(t)R_{K}\left(\frac{pz^{2m}}{t^{m}(z-t)^{m}},-\mu-\frac{1}{2},q,\lambda\right)dt\right\}.
\end{equation}
\end{definition}
\begin{remark}

1. Taking $\lambda = 0$ and $q=1$, the generalized extended Riemann-Liouville fractional derivative operator \eqref{definition1} is reduced to the extended Riemann-Liouville fractional derivative operator given by Agarwal et al. \cite{agarwal}
\begin{equation}\label{definitionAgrwal}
    D_{z}^{\delta,\mu;p;m}f(z):=\frac{1}{\Gamma(-\delta)}\sqrt{\frac{2p}{\pi}}\int_{0}^{z}(z-t)^{-\delta-1}f(t)
    K_{\mu+\frac{1}{2}}\left(\frac{pz^{2m}}{t^{m}(z-t)^{m}}\right)dt,
\end{equation}
where $\Re(\delta)<0$, $\Re(p)>0$, $\Re(m)>0$, $\Re(\mu)>0$.

2. If $\lambda = 0$, $q=1$, $m=0$, $\mu=0$ and $p\to 0,$ then  the generalized extended Riemann-Liouville fractional
derivative operator \eqref{definition1} reduces to the classical Riemann-Liouville fractional derivative operator \eqref{classical}.
\end{remark}
\par In order to calculate  generalized extended fractional derivatives for some functions, we give  two results concerning the generalized extended Riemann-Liouville fractional derivative operator of some elementary functions which will be useful in the sequel.
\begin{lemma}\label{lem1}Let $\Re(\delta)<0$. Then, we have
\begin{equation}\label{lemma1}
     D_{z}^{\delta,\mu;p;q;\lambda;m}\{z^{\beta}\}=\frac{z^{\beta-\delta}}{\Gamma(-\delta)}B_{\mu}(\beta+\frac{3}{2},-\delta+\frac{1}{2};p;q;\lambda;m).
\end{equation}
\end{lemma}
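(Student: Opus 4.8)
The plan is to substitute $f(t)=t^{\beta}$ directly into the defining integral \eqref{definition1} and reduce it to the beta integral \eqref{gbeta} by a single scaling change of variable. Since $\Re(\delta)<0$ we are in the fractional-integral regime, so we may work with \eqref{definition1} itself and avoid the extra differentiations of \eqref{definition12}. First I would write
$$D_{z}^{\delta,\mu;p;q;\lambda;m}\{z^{\beta}\}=\frac{1}{\Gamma(-\delta)}\sqrt{\frac{2p}{\pi}}\int_{0}^{z}(z-t)^{-\delta-1}t^{\beta}R_{K}\left(\frac{pz^{2m}}{t^{m}(z-t)^{m}},-\mu-\frac{1}{2},q,\lambda\right)dt.$$

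Next I would perform the substitution $t=zu$, $dt=z\,du$, so that the limits $0$ and $z$ become $0$ and $1$. The key simplification is that the argument of $R_{K}$ becomes independent of $z$: indeed
$$\frac{pz^{2m}}{t^{m}(z-t)^{m}}=\frac{pz^{2m}}{(zu)^{m}\,\bigl(z(1-u)\bigr)^{m}}=\frac{p}{u^{m}(1-u)^{m}},$$
which is exactly why the factor $z^{2m}$ was built into Definition \ref{definito}. Collecting the remaining powers of $z$, namely $z^{-\delta-1}\cdot z^{\beta}\cdot z=z^{\beta-\delta}$, I obtain
$$D_{z}^{\delta,\mu;p;q;\lambda;m}\{z^{\beta}\}=\frac{z^{\beta-\delta}}{\Gamma(-\delta)}\sqrt{\frac{2p}{\pi}}\int_{0}^{1}u^{\beta}(1-u)^{-\delta-1}R_{K}\left(\frac{p}{u^{m}(1-u)^{m}},-\mu-\frac{1}{2},q,\lambda\right)du.$$

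Finally I would match the integrand against the definition \eqref{gbeta} of $B_{\mu}$ by reading off the exponents: writing $u^{\beta}=u^{(\beta+3/2)-3/2}$ and $(1-u)^{-\delta-1}=(1-u)^{(-\delta+1/2)-3/2}$ identifies the integral with $B_{\mu}(\beta+\tfrac{3}{2},-\delta+\tfrac{1}{2};q;\lambda;p;m)$, which is the asserted right-hand side of \eqref{lemma1}. The only point requiring care is convergence, but this is automatic and requires no further restriction on $\beta$: as $u\to 0^{+}$ or $u\to 1^{-}$ the argument of $R_{K}$ tends to $+\infty$, and by the asymptotic estimate $R_{K}(z,-\alpha,q,\lambda)\sim\sqrt{\pi/(2z)}\,e^{-z\sqrt{q}}/q^{\alpha/2+1/4}$ recalled in the introduction, the exponential decay of $R_{K}$ dominates the algebraic factors $u^{\beta}$ and $(1-u)^{-\delta-1}$ for every $\beta$ and every $\delta$ with $\Re(\delta)<0$. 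Thus there is essentially no hard step here; the whole content is the cancellation of $z^{2m}$ under the scaling $t=zu$, which is precisely the structural feature of the operator.
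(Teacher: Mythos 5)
Your proof is correct and follows essentially the same route as the paper: substitute $f(t)=t^{\beta}$ into \eqref{definition1}, apply the scaling $t=zu$ so that the factor $z^{2m}$ cancels inside the argument of $R_{K}$, collect the power $z^{\beta-\delta}$, and identify the resulting integral with \eqref{gbeta}. The only addition is your closing convergence remark via the large-argument asymptotics of $R_{K}$, which the paper omits but which is a harmless (and correct) refinement.
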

\begin{proof} Using Definition \ref{definito}, and a local setting $t=zu,$ we obtain
\begin{eqnarray*}
    D_{z}^{\delta,\mu;p;q;\lambda;m}\{z^{\beta}\}&=& \frac{1}{\Gamma(-\delta)}\sqrt{\frac{2p}{\pi}}\int_{0}^{z}(z-t)^{-\delta-1}t^{\beta}R_{K}\left(\frac{pz^{2m}}{t^{m}(z-t)^{m}},-\mu-\frac{1}{2},q,\lambda\right)dt\\
&=& \frac{z^{\beta-\delta}}{\Gamma(-\delta)}\sqrt{\frac{2p}{\pi}}\int_{0}^{1}(1-u)^{(-\delta+\frac{1}{2})-\frac{3}{2}}u^{(\beta+\frac{3}{2})-\frac{3}{2}}R_{K}\left(\frac{p}{u^{m}(1-u)^{m}},-\mu-\frac{1}{2},q,\lambda\right)du \\
 &=&\frac{z^{\beta-\delta}}{\Gamma(-\delta)}B_{\mu}(\beta+\frac{3}{2},-\delta+\frac{1}{2};p;q;\lambda;m).
\end{eqnarray*}
\end{proof}
More generally, we give the  generalized extended Riemann-Liouville fractional derivative of an analytic function $f(z)$  at the origin.
\begin{lemma}\label{lem2} Let $\Re(\delta)<0$. If a function $f(z)$ is analytic at the origin, then
\begin{equation*}
    D_{z}^{\delta,\mu;p;q;\lambda;m}\{f(z)\}=\sum_{n=0}^{\infty}a_{n} D_{z}^{\delta,\mu;p;q;\lambda;m}\{z^{n}\}.
\end{equation*}
\end{lemma}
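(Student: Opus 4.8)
The plan is to insert the Taylor expansion of $f$ into the integral \eqref{definition1}, interchange the summation with the integral, and identify each resulting term by means of Lemma \ref{lem1}. First I would invoke the analyticity of $f$ at the origin to write $f(z)=\sum_{n=0}^{\infty}a_n z^{n}$, convergent on some disk $|z|<R$, and then normalize the integration variable by the substitution $t=zu$ used in the proof of Lemma \ref{lem1}. Since $(z-t)^{-\delta-1}=z^{-\delta-1}(1-u)^{-\delta-1}$ and $pz^{2m}/(t^m(z-t)^m)=p/(u^m(1-u)^m)$, this recasts the operator as
\begin{equation*}
D_{z}^{\delta,\mu;p;q;\lambda;m}\{f(z)\}=\frac{z^{-\delta}}{\Gamma(-\delta)}\sqrt{\frac{2p}{\pi}}\int_{0}^{1}(1-u)^{-\delta-1}f(zu)R_{K}\left(\frac{p}{u^{m}(1-u)^{m}},-\mu-\frac{1}{2},q,\lambda\right)du,
\end{equation*}
with $f(zu)=\sum_{n=0}^{\infty}a_n z^{n}u^{n}$.

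The crux is to integrate this series term by term. For $z$ fixed with $\rho:=|z|<R$ and $u\in[0,1]$ one has $|a_n z^{n}u^{n}|\le|a_n|\rho^{n}$, and $\sum_n|a_n|\rho^{n}<\infty$, so by the Weierstrass $M$-test the partial sums $S_N(u)=\sum_{n=0}^{N}a_n z^{n}u^{n}$ converge to $f(zu)$ uniformly in $u$ on $[0,1]$. The accompanying weight $h(u):=(1-u)^{-\delta-1}R_{K}(p/(u^m(1-u)^m),-\mu-\frac{1}{2},q,\lambda)$ is integrable on $[0,1]$: as $u\to 0^{+}$ and $u\to 1^{-}$ the argument of $R_K$ tends to $+\infty$, and by the large-argument asymptotic $R_{K}(\zeta,\cdots)\sim\sqrt{\pi/(2\zeta)}\,e^{-\zeta\sqrt{q}}$ recalled in the Introduction the kernel decays exponentially, which overwhelms the algebraic factor $(1-u)^{-\delta-1}$; hence $h$ is bounded and vanishes at both endpoints. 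Uniform convergence of the partial sums together with $h\in L^{1}[0,1]$ then licenses the exchange of $\sum_n$ and $\int_0^1$, because $\left|\int_0^1(S_N-f(z\,\cdot))\,h\right|\le\|S_N-f(z\,\cdot)\|_{\infty}\int_0^1|h|\to 0$.

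Finally I would recognize the resulting $n$-th term. After the interchange it reads
\begin{equation*}
a_n\frac{z^{n-\delta}}{\Gamma(-\delta)}\sqrt{\frac{2p}{\pi}}\int_{0}^{1}u^{n}(1-u)^{-\delta-1}R_{K}\left(\frac{p}{u^{m}(1-u)^{m}},-\mu-\frac{1}{2},q,\lambda\right)du,
\end{equation*}
and comparing with the generalized extended beta function \eqref{gbeta} the integral equals $B_{\mu}(n+\frac{3}{2},-\delta+\frac{1}{2};p;q;\lambda;m)$, so by Lemma \ref{lem1} the term is exactly $a_n D_{z}^{\delta,\mu;p;q;\lambda;m}\{z^{n}\}$; summing over $n$ yields the claim. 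The single genuine obstacle is the justification of the termwise integration, and within it the only delicate point is the integrability of the weight $h$ near $u=0$ and $u=1$, which is precisely where the exponential decay of the Boudjelkha kernel $R_K$ is indispensable.
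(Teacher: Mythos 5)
Your proof is correct and takes essentially the same route as the paper's: expand $f$ in its Maclaurin series, interchange summation and integration, and identify each term via Lemma \ref{lem1}. The only difference is one of rigor, in your favor: where the paper dismisses the interchange with a one-line appeal to ``uniform continuity on the convergence disk,'' you justify it properly --- rescaling by $t=zu$ to the fixed interval $[0,1]$, applying the Weierstrass $M$-test to get uniform convergence of the partial sums, and using the large-argument exponential decay of the Boudjelkha kernel $R_K$ to establish integrability of the weight near both endpoints.
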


\begin{proof} Since $f$ is analytic at the origin, its Maclaurin expansion is given by $f(z)=\sum_{n=0}^{\infty}a_{n}z^{n}$ (for $|z|<\rho$ with $\rho\in\mathbb{R}^{+}$ is the convergence radius). By substituting entire power series in Definition \ref{definito}, we obtain
\begin{equation*}
  D_{z}^{\delta,\mu;p;q;\lambda;m}\{f(z)\}=\frac{1}{\Gamma(-\delta)}\sqrt{\frac{2p}{\pi}}\int_{0}^{z}(z-t)^{-\delta-1}R_{K}\left(\frac{pz^{2m}}{t^{m}(z-t)^{m}},-\mu-\frac{1}{2};q;\lambda\right)\sum_{n=0}^{\infty}a_{n}t^{n}dt.
\end{equation*}
By virtue of the uniform continuity on the convergence disk, we can do integration term by term in the equation above. Thus
\begin{eqnarray*}
   D_{z}^{\delta,\mu;p;q;\lambda;m}\{f(z)\} &=& \sum_{n=0}^{\infty}a_{n} \left\{\frac{1}{\Gamma(-\delta)}\sqrt{\frac{2p}{\pi}}\int_{0}^{z}(z-t)^{-\delta-1}R_{K}\left(\frac{pz^{2m}}{t^{m}(z-t)^{m}},-\mu-\frac{1}{2};q;\lambda\right)t^{n}dt\right\} \\
   &=& \sum_{n=0}^{\infty}a_{n} D_{z}^{\delta,\mu;p;q;\lambda;m}\{z^{n}\}.
\end{eqnarray*}
\end{proof}
\begin{corollary}
\begin{equation*}
     D_{z}^{\delta,\mu;p;q;\lambda;m}\{(1-z)^{-\alpha}\}=\frac{z^{-\delta}}{\Gamma(-\delta)}B\left(\frac{3}{2},
     -\delta+\frac{1}{2}\right)F_{\mu}(\alpha,\frac{3}{2},-\delta+2;z;q;\lambda;p;m),
\end{equation*}
where $\Re(\alpha)>0$ and $\Re(\delta)<0$.
\end{corollary}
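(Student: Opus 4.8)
The plan is to combine the two preceding lemmas with the binomial series for $(1-z)^{-\alpha}$ and then recognize the resulting power series as an extended Gauss hypergeometric function. Since $(1-z)^{-\alpha}$ is analytic at the origin on $|z|<1$, I would first invoke Lemma \ref{lem2} together with the binomial expansion
\begin{equation*}
(1-z)^{-\alpha}=\sum_{n=0}^{\infty}(\alpha)_n\frac{z^n}{n!},\quad(|z|<1),
\end{equation*}
so that the Maclaurin coefficients are $a_n=(\alpha)_n/n!$. This immediately gives
\begin{equation*}
D_{z}^{\delta,\mu;p;q;\lambda;m}\{(1-z)^{-\alpha}\}=\sum_{n=0}^{\infty}\frac{(\alpha)_n}{n!}\,D_{z}^{\delta,\mu;p;q;\lambda;m}\{z^{n}\}.
\end{equation*}

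Next, I would substitute the monomial formula of Lemma \ref{lem1} with $\beta=n$, namely
\begin{equation*}
D_{z}^{\delta,\mu;p;q;\lambda;m}\{z^{n}\}=\frac{z^{n-\delta}}{\Gamma(-\delta)}B_{\mu}\!\left(n+\tfrac{3}{2},-\delta+\tfrac{1}{2};q;\lambda;p;m\right),
\end{equation*}
and factor out $z^{-\delta}/\Gamma(-\delta)$, which leaves
\begin{equation*}
D_{z}^{\delta,\mu;p;q;\lambda;m}\{(1-z)^{-\alpha}\}=\frac{z^{-\delta}}{\Gamma(-\delta)}\sum_{n=0}^{\infty}(\alpha)_n\,B_{\mu}\!\left(\tfrac{3}{2}+n,-\delta+\tfrac{1}{2};q;\lambda;p;m\right)\frac{z^{n}}{n!}.
\end{equation*}

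Finally, I would recognize the remaining series through Definition \ref{Defin}. The choice $b=\tfrac{3}{2}$, $c-b=-\delta+\tfrac{1}{2}$ (hence $c=-\delta+2$) and $a=\alpha$ makes the first beta-argument read $b+n=\tfrac{3}{2}+n$ and matches the Pochhammer factor $(\alpha)_n$; multiplying and dividing by $B(\tfrac{3}{2},-\delta+\tfrac{1}{2})$ then converts the sum into $B(\tfrac{3}{2},-\delta+\tfrac{1}{2})\,F_{\mu}(\alpha,\tfrac{3}{2};-\delta+2;z;q;\lambda;p;m)$, which is exactly the claimed identity.

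The only step requiring genuine care — the main obstacle — is the interchange of summation with the fractional-derivative (integral) operator, but this is precisely what Lemma \ref{lem2} supplies: it rests on the uniform convergence of the Maclaurin series on compact subsets of the convergence disk, and here the radius is $\rho=1$ for the binomial series, so the term-by-term operation is valid for $|z|<1$. The hypotheses $\Re(\alpha)>0$ and $\Re(\delta)<0$ ensure that each $B_\mu$-integral converges and that $\Gamma(-\delta)$ is finite, so the parameter identification in the last step is legitimate throughout.
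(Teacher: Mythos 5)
Your proposal is correct and follows essentially the same route as the paper: binomial expansion of $(1-z)^{-\alpha}$, term-by-term application of the operator justified by Lemma \ref{lem2}, substitution of Lemma \ref{lem1} with $\beta=n$, and identification of the resulting series with $F_{\mu}$ via Definition \ref{Defin}. In fact you are slightly more careful than the paper, which cites only Lemma \ref{lem1} and leaves the interchange of sum and operator and the final normalization by $B\left(\tfrac{3}{2},-\delta+\tfrac{1}{2}\right)$ implicit.
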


\begin{proof}
Using binomial Theorem for $(1-z)^{-\alpha}$ and Lemma \ref{lem1}, we obtain:
\begin{eqnarray*}
  D_{z}^{\delta,\mu;p;q;\lambda;m}\{(1-z)^{-\alpha}\}&=&D_{z}^{\delta,\mu;p;q;\lambda;m}\left\lbrace \sum_{n=0}^{\infty}(\alpha)_{n}\frac{z^{n}}{n!}\right\rbrace   =\sum_{n=0}^{\infty}\frac{(\alpha)_{n}}{n!}D_{z}^{\delta,\mu;p;q;\lambda;m}\{z^{n}\} \\
  &=& \frac{z^{-\delta}}{\Gamma(-\delta)}\sum_{n=0}^{\infty}(\alpha)_{n}{B_{\mu}}(n+\frac{3}{2},-\delta +\frac{1}{2};p,q;\lambda;m)\frac{z^{n}}{n!}.
\end{eqnarray*}
Hence the result.
\end{proof}
Combining previous Lemmas, we obtain the generalized extended derivative of the product of analytic function with a power function.
\begin{theorem}\label{thm1}
Let $\Re(\delta)<0$. Suppose that a function $f(z)$ is analytic at the origin with its Maclaurin expansion given by $f(z)=\sum_{n=0}^{\infty}a_{n}z^{n}$, $(|z|<\rho)$ for some $\rho\in\mathbb{R}^{+}$. Then we have
\begin{equation}
    D_{z}^{\delta,\mu;p;q;\lambda;m}\{z^{\beta-1}f(z)\}=\sum_{n=0}^{\infty}a_{n}D_{z}^{\delta,\mu;p;q;\lambda;m}\{z^{\beta+n-1}\}=\frac{z^{\beta-\delta-1}}{\Gamma(-\delta)}\sum_{n=0}^{\infty}a_{n}B_{\mu}(\beta+n+\frac{1}{2},-\delta+\frac{1}{2};p;q;\lambda;m)z^{n}.
\end{equation}
\end{theorem}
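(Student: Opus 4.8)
The plan is to reduce the statement to the two lemmas just established and obtain the two equalities in turn. First I would expand the factor $z^{\beta-1}f(z)$ by means of the Maclaurin series of $f$, writing $z^{\beta-1}f(z)=\sum_{n=0}^{\infty}a_{n}z^{\beta+n-1}$, and insert this directly into Definition \ref{definito}. This gives
$$D_{z}^{\delta,\mu;p;q;\lambda;m}\{z^{\beta-1}f(z)\}=\frac{1}{\Gamma(-\delta)}\sqrt{\frac{2p}{\pi}}\int_{0}^{z}(z-t)^{-\delta-1}t^{\beta-1}\Big(\sum_{n=0}^{\infty}a_{n}t^{n}\Big)R_{K}\Big(\frac{pz^{2m}}{t^{m}(z-t)^{m}},-\mu-\tfrac{1}{2},q,\lambda\Big)dt.$$
The crux of the first equality is then to interchange the summation with the integral. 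This is exactly the interchange justified in the proof of Lemma \ref{lem2}: since the series $\sum a_{n}t^{n}$ converges uniformly on every compact subdisk of $|t|<\rho$ and the choice $z<\rho$ keeps the path of integration inside that disk, uniform convergence permits term-by-term integration. Carrying out the interchange produces precisely $\sum_{n=0}^{\infty}a_{n}D_{z}^{\delta,\mu;p;q;\lambda;m}\{z^{\beta+n-1}\}$, which is the first claimed identity.

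For the second equality I would simply invoke Lemma \ref{lem1} with the exponent $\beta+n-1$ in place of $\beta$, giving
$$D_{z}^{\delta,\mu;p;q;\lambda;m}\{z^{\beta+n-1}\}=\frac{z^{\beta+n-1-\delta}}{\Gamma(-\delta)}B_{\mu}\Big(\beta+n+\tfrac{1}{2},-\delta+\tfrac{1}{2};p;q;\lambda;m\Big).$$
Substituting this expression into the series and pulling the common factor $z^{\beta-\delta-1}/\Gamma(-\delta)$ outside the summation then yields the closed form displayed on the right-hand side of the theorem, completing the argument.

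The main obstacle is the rigorous justification of the term-by-term integration in the first step. One must check that the extended Bessel kernel $R_{K}$, combined with the integrable singularities $(z-t)^{-\delta-1}$ and $t^{\beta-1}$, does not spoil the uniform estimates required for the interchange. However, since the reasoning is identical to the one already carried out for Lemma \ref{lem2}, namely uniform continuity and uniform convergence of the power series on the closed subdisk, no new analytic difficulty arises, and the interchange is legitimate for every $z$ inside the disk of convergence.
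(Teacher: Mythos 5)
Your proposal is correct and matches the paper's intent exactly: the paper gives no separate proof of Theorem \ref{thm1}, introducing it only with the phrase ``Combining previous Lemmas,'' and your argument is precisely that combination --- term-by-term integration justified as in Lemma \ref{lem2}, followed by Lemma \ref{lem1} applied with exponent $\beta+n-1$, which correctly produces the parameter $\beta+n+\tfrac{1}{2}$ and the common factor $z^{\beta-\delta-1}/\Gamma(-\delta)$.
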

A subsequent result can be given as follows
\begin{theorem}\label{thm2}
For $\Re(\delta)>\Re(\beta)>-\frac{1}{2}$, we have
\begin{equation}
 D_{z}^{\beta-\delta,\mu;p;q;\lambda;m}\{z^{\beta-1}(1-z)^{-\alpha}\}=\frac{z^{\delta-1}}{\Gamma(\delta-\beta)}B(\beta+\frac{1}{2},\delta-\beta+\frac{1}{2})F_{\mu}(\alpha,\beta+\frac{1}{2};\delta+1;z;q;\lambda;p;m)\;\;(|z|<1;\,\alpha\in\mathbb{C}).
\end{equation}
\end{theorem}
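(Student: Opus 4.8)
The plan is to reduce the claim to Theorem~\ref{thm1} by recognizing the integrand as a power function multiplied by an analytic function, and then to repackage the resulting series as an extended Gauss hypergeometric function via Definition~\ref{Defin}. The only genuinely delicate point, namely the legitimacy of differentiating the binomial series term by term, is already absorbed into Theorem~\ref{thm1} (which rests on Lemma~\ref{lem2}), so the work reduces to careful bookkeeping of the parameters.

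First I would note that the order of the operator here is $\beta-\delta$, and the hypothesis $\Re(\delta)>\Re(\beta)$ forces $\Re(\beta-\delta)<0$; hence the integral form \eqref{definition1} is in force and Theorem~\ref{thm1} applies with differentiation order $\beta-\delta$ in place of the $\delta$ appearing there. I would then take $f(z)=(1-z)^{-\alpha}$, whose Maclaurin expansion is $\sum_{n=0}^{\infty}(\alpha)_n z^n/n!$ for $|z|<1$ by \eqref{gbeta1}, so that $a_n=(\alpha)_n/n!$. Thus $z^{\beta-1}(1-z)^{-\alpha}$ has exactly the shape $z^{\beta-1}f(z)$ treated in Theorem~\ref{thm1}.

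Applying Theorem~\ref{thm1} with order $\beta-\delta$ and the above coefficients, the prefactor becomes $z^{\beta-(\beta-\delta)-1}=z^{\delta-1}$, the gamma factor becomes $\Gamma\big(-(\beta-\delta)\big)=\Gamma(\delta-\beta)$, and the second beta argument becomes $-(\beta-\delta)+\frac12=\delta-\beta+\frac12$, yielding
\[
D_{z}^{\beta-\delta,\mu;p;q;\lambda;m}\{z^{\beta-1}(1-z)^{-\alpha}\}
=\frac{z^{\delta-1}}{\Gamma(\delta-\beta)}\sum_{n=0}^{\infty}(\alpha)_n\,
B_{\mu}\!\left(\beta+n+\tfrac12,\;\delta-\beta+\tfrac12;p;q;\lambda;m\right)\frac{z^n}{n!}.
\]

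Finally I would identify this series with a hypergeometric one. Setting $a=\alpha$, $b=\beta+\frac12$ and $c-b=\delta-\beta+\frac12$, so that $c=\delta+1$, Definition~\ref{Defin} gives, after multiplying through by $B(b,c-b)$,
\[
\sum_{n=0}^{\infty}(\alpha)_n\,B_{\mu}\!\left(\beta+n+\tfrac12,\;\delta-\beta+\tfrac12;p;q;\lambda;m\right)\frac{z^n}{n!}
=B\!\left(\beta+\tfrac12,\delta-\beta+\tfrac12\right)F_{\mu}\!\left(\alpha,\beta+\tfrac12;\delta+1;z;q;\lambda;p;m\right),
\]
which substituted above produces precisely the asserted identity. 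Along the way I would check that the hypergeometric function is well defined: $\Re(\beta)>-\frac12$ gives $\Re\big(\beta+\frac12\big)>0$, and $\Re(\delta)>\Re(\beta)$ gives $\Re(\delta+1)>\Re\big(\beta+\frac12\big)$, so the requirement $\Re(c)>\Re(b)>0$ of Definition~\ref{Defin} holds, and the interchange of summation with the operator is guaranteed by Lemma~\ref{lem2} within $|z|<1$. The main (and only) obstacle is thus verifying these parameter constraints and the uniform convergence underlying the termwise operation; the rest is algebraic substitution.
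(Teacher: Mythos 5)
Your proposal is correct and takes essentially the same route as the paper: the paper likewise expands $(1-z)^{-\alpha}$ by the binomial theorem, applies the operator term by term to the powers $z^{\beta+k-1}$ (exactly the content of Theorem~\ref{thm1}, i.e.\ Lemmas~\ref{lem1} and~\ref{lem2} combined, which you invoke), and then identifies the resulting series with $F_{\mu}$ via Definition~\ref{Defin}. Your explicit check of the parameter constraints $\Re(c)>\Re(b)>0$ is a small addition the paper leaves implicit.
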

\begin{proof} The result is easily established by taking $f(z)=(1-z)^{-\alpha}$, so we have
\begin{eqnarray*}
  D_{z}^{\beta-\delta,\mu;p;q;\lambda;m}\{z^{\beta-1}(1-z)^{-\alpha}\} &=&D_{z}^{\beta-\delta,\mu;p;q;\lambda;m}\{z^{\beta-1}\sum_{k=0}^{\infty}(\alpha)_{k}\frac{z^{k}}{k!}\}  \\
  &=&\sum_{k=0}^{\infty}\frac{(\alpha)_{k}}{k!}D_{z}^{\beta-\delta,\mu;p;q;\lambda;m}\{z^{\beta+k-1}\} \\
  &=&\sum_{k=0}^{\infty}\frac{(\alpha)_{k}}{k!}\frac{B_{\mu}(\beta+k+\frac{1}{2},\delta-\beta+\frac{1}{2};p;q;\lambda;m)}{\Gamma(\delta-\beta)}z^{\delta+k-1}.
\end{eqnarray*}
By the expression \eqref{gbeta2}, we get
\begin{equation*}
  D_{z}^{\beta-\delta,\mu;p;q;\lambda;m}\{z^{\beta-1}(1-z)^{-\alpha}\}=\frac{z^{\delta-1}}{\Gamma(\delta-\beta)}B(\beta+\frac{1}{2},\delta-\beta+\frac{1}{2})F_{\mu}(\alpha,\beta+\frac{1}{2};\delta+1;z;q;\lambda;p;m).
\end{equation*}
\end{proof}
\begin{theorem}\label{thm3} For $\Re(\delta)>\Re(\beta)>-\frac{1}{2}$, $\Re(\alpha)>0,$ $\Re(\gamma)>0$, $|az|<1$ and $|bz|<1$. Then, the following generating relation holds true
\begin{equation}
D_{z}^{\beta-\delta,\mu;p;q;\lambda;m}\{z^{\beta-1}(1-az)^{-\alpha}(1-bz)^{-\gamma}\}=\frac{z^{\delta-1}}{\Gamma(\delta-\beta)}B\left(\beta+\frac{1}{2},\delta-\beta+\frac{1}{2}\right)F_{1,\mu}(\beta+\frac{1}{2},\alpha,\gamma;\delta+1;az,bz;q;\lambda;p;m).
 \end{equation}
\end{theorem}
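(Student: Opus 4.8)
The plan is to mirror the argument of Theorem \ref{thm2}, simply replacing the single binomial factor by a product of two. First I would expand each factor by the generalized binomial series, which is legitimate since $|az|<1$ and $|bz|<1$, and multiply by $z^{\beta-1}$ to obtain the double power series
$z^{\beta-1}(1-az)^{-\alpha}(1-bz)^{-\gamma}=\sum_{n,k=0}^{\infty}(\alpha)_n(\gamma)_k\,\frac{a^n b^k}{n!\,k!}\,z^{\beta+n+k-1}$.
Then I would apply the operator $D_{z}^{\beta-\delta,\mu;p;q;\lambda;m}$ term by term.

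The justification for interchanging this operator (an integral against the $R_K$ kernel) with the double summation is the step I expect to require the most care, although it is essentially routine given the hypotheses: on any compact subdisk of $\{|az|<1,\,|bz|<1\}$ the double series converges uniformly, so exactly as in the proof of Lemma \ref{lem2} one may integrate term by term. Having done so, I would invoke Lemma \ref{lem1} with fractional order $\beta-\delta$ and exponent $\beta+n+k-1$; a direct substitution, after simplifying the parameters $-(\beta-\delta)=\delta-\beta$, $(\beta+n+k-1)-(\beta-\delta)=\delta+n+k-1$ and $(\beta+n+k-1)+\tfrac{3}{2}=\beta+n+k+\tfrac{1}{2}$, yields $D_{z}^{\beta-\delta,\mu;p;q;\lambda;m}\{z^{\beta+n+k-1}\}=\frac{z^{\delta+n+k-1}}{\Gamma(\delta-\beta)}\,B_{\mu}(\beta+n+k+\tfrac{1}{2},\delta-\beta+\tfrac{1}{2};p;q;\lambda;m)$.

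Finally, collecting the resulting double series, I would recognize the parameter matching against the definition \eqref{gbeta3} of the extended Appell function $F_{1,\mu}$, namely $a=\beta+\tfrac{1}{2}$ (so that $a+n+k=\beta+n+k+\tfrac{1}{2}$), $d-a=\delta-\beta+\tfrac{1}{2}$ (forcing $d=\delta+1$), $b=\alpha$, $c=\gamma$, $x=az$ and $y=bz$. Pulling the normalizing factor $B(\beta+\tfrac{1}{2},\delta-\beta+\tfrac{1}{2})$ out of the denominator in that definition then produces the asserted closed form, with the leading factor $\frac{z^{\delta-1}}{\Gamma(\delta-\beta)}$ arising from $z^{\delta+n+k-1}=z^{\delta-1}z^{n+k}$. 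The condition $\Re(\delta)>\Re(\beta)>-\tfrac{1}{2}$ guarantees that both arguments of $B(\beta+\tfrac{1}{2},\delta-\beta+\tfrac{1}{2})$ and all the $B_{\mu}$ integrals have the positive real parts needed for convergence, so the manipulations above are valid.
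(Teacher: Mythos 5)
Your proposal is correct and follows essentially the same route as the paper's own proof: double binomial expansion of $(1-az)^{-\alpha}(1-bz)^{-\gamma}$, term-by-term application of the operator justified as in Lemma \ref{lem2}, evaluation of each term via Lemma \ref{lem1}, and identification of the resulting double series with $F_{1,\mu}$ through definition \eqref{gbeta3}. Your parameter matching ($a=\beta+\tfrac{1}{2}$, $d=\delta+1$, $b=\alpha$, $c=\gamma$, $x=az$, $y=bz$) agrees exactly with what the paper does implicitly.
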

\begin{proof} By  applying the binomial Theorem to $(1-az)^{-\alpha}$ and $(1-bz)^{-\gamma}$ and making use of  Lemmas \ref{lem1} and \ref{lem2}, we obtain
\begin{eqnarray*}
  D_{z}^{\beta-\delta,\mu;p;q;\lambda;m}\{z^{\beta-1}(1-az)^{-\alpha}(1-bz)^{-\gamma}\} &=&D_{z}^{\beta-\delta,\mu;p;q;\lambda;m}\{z^{\beta-1}\sum_{k=0}^{\infty}\sum_{r=0}^{\infty}(\alpha)_{k}(\gamma)_{r}\frac{(az)^{k}}{k!}\frac{(bz)^{r}}{r!}\}  \\
  &=&\sum_{k,r=0}^{\infty}(\alpha)_{k}(\gamma)_{r}D_{z}^{\beta-\delta,\mu;p;q;\lambda;m}\{z^{\beta+k+r-1}\} \frac{a^{k}}{k!}\frac{b^{r}}{r!}\\
  &=&z^{\delta-1}\sum_{k,r=0}^{\infty}(\alpha)_{k}(\gamma)_{r}\frac{B_{\mu}(\beta+k+r+\frac{1}{2},\delta-\beta+\frac{1}{2};p;q;\lambda;m)}{\Gamma(\delta-\beta)}\frac{(az)^{k}}{k!}\frac{(bz)^{r}}{r!}.
\end{eqnarray*}
By using  \eqref{gbeta3}, we can get
\begin{equation*}
D_{z}^{\beta-\delta,\mu;p;q;\lambda;m}\{z^{\beta-1}(1-az)^{-\alpha}(1-bz)^{-\gamma}\}=\frac{z^{\delta-1}}{\Gamma(\delta-\beta)}B\left(\beta+\frac{1}{2},\delta-\beta+\frac{1}{2}\right)F_{1,\mu}(\beta+\frac{1}{2},\alpha,\gamma;\delta+1;az,bz;q;\lambda;p;m).
 \end{equation*}
\end{proof}
\begin{theorem}\label{thm4}For $\Re(\delta)>\Re(\beta)>-\frac{1}{2}$, $\Re(\alpha)>0$, $\Re(\gamma)>0$, $\Re(\tau)>0$, $|az|<1$, $|bz|<1$ and $|cz|<1$. \\
Then we have
\begin{equation}
D_{z}^{\beta-\delta,\mu;p;q;\lambda;m}\{z^{\beta-1}(1-az)^{-\alpha}(1-bz)^{-\gamma}(1-cz)^{-\tau}\}=\frac{z^{\delta-1}}{\Gamma(\delta-\beta)}B\left(\beta+\frac{1}{2},\delta-\beta+\frac{1}{2}\right)F_{D,\mu}^{3}(\beta+\frac{1}{2},\alpha,\gamma,\tau;\delta+1;az,bz;q;\lambda;p;m).
\end{equation}
\end{theorem}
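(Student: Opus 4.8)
The plan is to mirror the proof of Theorem \ref{thm3}, extending it from two binomial factors to three so that the resulting triple series matches the definition \eqref{gbeta6} of the Lauricella function $F_{D,\mu}^3$. First I would expand each of the three analytic factors by the binomial theorem, writing
\begin{equation*}
z^{\beta-1}(1-az)^{-\alpha}(1-bz)^{-\gamma}(1-cz)^{-\tau}=z^{\beta-1}\sum_{k,r,s=0}^{\infty}(\alpha)_k(\gamma)_r(\tau)_s\frac{(az)^k}{k!}\frac{(bz)^r}{r!}\frac{(cz)^s}{s!},
\end{equation*}
which is valid and represents an analytic function near the origin on the polydisc $|az|<1$, $|bz|<1$, $|cz|<1$.

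Next, appealing to Lemma \ref{lem2} (in its natural multi-index form) to pull the operator $D_{z}^{\beta-\delta,\mu;p;q;\lambda;m}$ inside the summation, I would reduce the computation to the monomial case, which is supplied by Lemma \ref{lem1}. Applying that lemma with exponent $\beta+k+r+s-1$ and order $\beta-\delta$ gives
\begin{equation*}
D_{z}^{\beta-\delta,\mu;p;q;\lambda;m}\{z^{\beta+k+r+s-1}\}=\frac{z^{\delta+k+r+s-1}}{\Gamma(\delta-\beta)}B_\mu\left(\beta+k+r+s+\tfrac{1}{2},\delta-\beta+\tfrac{1}{2};p;q;\lambda;m\right).
\end{equation*}
Substituting this back and collecting the powers of $z$ as $z^{\delta-1}(az)^k(bz)^r(cz)^s$ produces
\begin{equation*}
\frac{z^{\delta-1}}{\Gamma(\delta-\beta)}\sum_{k,r,s=0}^{\infty}(\alpha)_k(\gamma)_r(\tau)_s\,B_\mu\left(\beta+k+r+s+\tfrac{1}{2},\delta-\beta+\tfrac{1}{2}\right)\frac{(az)^k}{k!}\frac{(bz)^r}{r!}\frac{(cz)^s}{s!}.
\end{equation*}

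Comparing with \eqref{gbeta6} under the identification $a=\beta+\frac{1}{2}$, $b=\alpha$, $c=\gamma$, $d=\tau$, $e=\delta+1$ (so that $e-a=\delta-\beta+\frac{1}{2}$), with the three variables taken to be $az,bz,cz$, I would factor out $B(\beta+\frac{1}{2},\delta-\beta+\frac{1}{2})$ from each coefficient and recognise the triple series as $F_{D,\mu}^3(\beta+\frac{1}{2},\alpha,\gamma,\tau;\delta+1;az,bz,cz;q;\lambda;p;m)$. This immediately yields the asserted identity. (I note in passing that the statement appears to omit the third argument $cz$ in the variable list of $F_{D,\mu}^3$; the derivation shows it must be present.)

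The only step needing genuine care, rather than routine symbol manipulation, is the justification for interchanging the fractional derivative with the triple summation. This is precisely the content of Lemma \ref{lem2}, whose hypothesis is met here because the product of the three binomial series is analytic at the origin and its Cauchy product converges absolutely and uniformly on compact subsets of the polydisc $|az|,|bz|,|cz|<1$; once this is invoked, the term-by-term differentiation is legitimate and everything that follows is bookkeeping identical in spirit to the proofs of Theorems \ref{thm2} and \ref{thm3}.
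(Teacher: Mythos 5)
Your proposal is correct and follows exactly the route the paper takes: its proof of this theorem is precisely "expand $(1-az)^{-\alpha}$, $(1-bz)^{-\gamma}$, $(1-cz)^{-\tau}$ by the binomial theorem and apply Lemmas \ref{lem1} and \ref{lem2}, as in Theorem \ref{thm3}," which you have simply carried out in full detail. Your side remark is also right: the variable list in the stated identity should read $az,bz,cz$ rather than $az,bz$, a typo in the paper.
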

\begin{proof}
 The proof is similar to that of  Theorem \ref{thm3}, it is sufficient to use the binomial Theorem for $(1-az)^{-\alpha}$, $(1-bz)^{-\gamma}$, $(1-cz)^{-\tau},$ then applying Lemmas \ref{lem1} and \ref{lem2}.
\end{proof}
\begin{theorem}\label{thm5} For $\Re(\delta)>\Re(\beta)>-\frac{1}{2}$, $\Re(\alpha)>0$, $\Re(\tau)>\Re(\gamma)>0$, $\left|\frac{x}{1-z}\right|<1$ and $|x|+|z|<1,$ we have
\begin{eqnarray}
&&D_{z}^{\beta-\delta,\mu;p;q;\lambda;m}\{z^{\beta-1}(1-z)^{-\alpha}F_{\mu}(\alpha,\gamma;\tau;\frac{x}{1-z};q;\lambda;p;m)\} \nonumber\\
&=&z^{\delta-1}\frac{B(\beta+\frac{1}{2},\delta-\beta+\frac{1}{2})}{\Gamma(\delta-\beta)}F_{2,\mu}(\alpha,\gamma,\beta+\frac{1}{2},\tau;\delta+1;x,z;q;\lambda;p;m).
  \end{eqnarray}
\end{theorem}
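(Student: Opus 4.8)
The plan is to reduce the statement to the already–proved Theorem~\ref{thm2} by expanding the inner hypergeometric factor as a power series and differentiating termwise. First I would substitute the series definition \eqref{gbeta2} of the inner function,
$$F_{\mu}\left(\alpha,\gamma;\tau;\tfrac{x}{1-z};q;\lambda;p;m\right)=\sum_{n=0}^{\infty}(\alpha)_n\frac{B_\mu(\gamma+n,\tau-\gamma;q;\lambda;p;m)}{B(\gamma,\tau-\gamma)}\frac{1}{n!}\left(\frac{x}{1-z}\right)^{n},$$
and absorb the prefactor $(1-z)^{-\alpha}$ together with the factor $(1-z)^{-n}$ coming from each term into $(1-z)^{-\alpha-n}$. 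This rewrites the argument of the operator as $z^{\beta-1}\sum_n c_n\,x^n(1-z)^{-\alpha-n}$ with constants $c_n$ independent of $z$.

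Next I would move the operator $D_z^{\beta-\delta,\mu;p;q;\lambda;m}$ inside the summation over $n$. Exactly as in Lemma~\ref{lem2}, each summand $z^{\beta-1}(1-z)^{-\alpha-n}$ is, up to the $z^{\beta-1}$ factor, analytic at the origin, so its fractional derivative is delivered by Theorem~\ref{thm2} with $\alpha$ replaced by $\alpha+n$:
$$D_z^{\beta-\delta,\mu;p;q;\lambda;m}\{z^{\beta-1}(1-z)^{-\alpha-n}\}=\frac{z^{\delta-1}}{\Gamma(\delta-\beta)}B\!\left(\beta+\tfrac12,\delta-\beta+\tfrac12\right)F_\mu\!\left(\alpha+n,\beta+\tfrac12;\delta+1;z;q;\lambda;p;m\right).$$

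Then I would expand this second $F_\mu$ via \eqref{gbeta2} as a series in $z$ with index $k$. Two simplifications now occur: the factor $B(\beta+\tfrac12,\delta-\beta+\tfrac12)$ produced by Theorem~\ref{thm2} cancels the corresponding denominator inside the $k$-series, and the Pochhammer identity $(\alpha)_n(\alpha+n)_k=(\alpha)_{n+k}$ collapses the two ascending factorials. The outcome is the double sum
$$\frac{z^{\delta-1}}{\Gamma(\delta-\beta)}B\!\left(\beta+\tfrac12,\delta-\beta+\tfrac12\right)\sum_{n,k=0}^{\infty}(\alpha)_{n+k}\frac{B_\mu(\gamma+n,\tau-\gamma;\cdots)}{B(\gamma,\tau-\gamma)}\frac{B_\mu(\beta+k+\tfrac12,\delta-\beta+\tfrac12;\cdots)}{B(\beta+\tfrac12,\delta-\beta+\tfrac12)}\frac{x^n}{n!}\frac{z^k}{k!},$$
which is precisely the definition \eqref{gbeta5} of $F_{2,\mu}(\alpha,\gamma,\beta+\tfrac12,\tau;\delta+1;x,z;\cdots)$, yielding the claimed identity.

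The main obstacle is the justification of the termwise differentiation, that is, interchanging the infinite sum over $n$ with the integral operator $D_z^{\beta-\delta,\mu;p;q;\lambda;m}$. This is exactly where the hypotheses $\left|\frac{x}{1-z}\right|<1$ and $|x|+|z|<1$ enter: the first guarantees convergence of the inner expansion so that the termwise manipulation is legitimate, as in Lemma~\ref{lem2}, while the second secures absolute convergence of the final double series defining $F_{2,\mu}$, permitting the rearrangement of the two summations.
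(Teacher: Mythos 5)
Your proposal is correct and follows essentially the same route as the paper's proof: expand the inner $F_{\mu}$ by its series definition \eqref{gbeta2}, interchange the fractional derivative operator with the sum, apply Theorem \ref{thm2} termwise to $z^{\beta-1}(1-z)^{-\alpha-n}$, then expand the resulting $F_{\mu}$, collapse the Pochhammer symbols via $(\alpha)_n(\alpha+n)_k=(\alpha)_{n+k}$, and identify the double series with the definition \eqref{gbeta5} of $F_{2,\mu}$. Your added remarks on which hypothesis justifies which interchange are a sensible supplement to the paper's brief treatment, but the argument is the same.
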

\begin{proof} By the binomial formula and according to Definition \ref{Defin}, we expand $z^{\beta-1}(1-z)^{-\alpha}F_{\mu}(\alpha,\gamma;\tau;\frac{x}{1-z};q;\lambda;p;m)$ to get
\begin{eqnarray*}
&&D_{z}^{\beta-\delta,\mu;p;q;\lambda;m}\left\{z^{\beta-1}(1-z)^{-\alpha}F_{\mu}(\alpha,\gamma;\tau;\frac{x}{1-z};q;\lambda;p;m)\right\} \\
  &=&D_{z}^{\beta-\delta,\mu;p;q;\lambda;m}\left\{z^{\beta-1}(1-z)^{-\alpha}\sum_{n=0}^{\infty}\frac{(\alpha)_{n}}{n!}\frac{B_{\mu}(\gamma+n,\tau-\gamma;q;\lambda;p;m)}{B(\gamma,\tau-\gamma)}\left(\frac{x}{1-z}\right)^{n} \right\} \\
&=&\sum_{n=0}^{\infty}(\alpha)_{n}\frac{B_{\mu}(\gamma+n,\tau-\gamma;q;\lambda;p;m)}{B(\gamma,\tau-\gamma)}D_{z}^{\beta-\delta,\mu;p;q;\lambda;m}\{z^{\beta-1}(1-z)^{-\alpha-n}\}\frac{x^{n}}{n!}.
\end{eqnarray*}
In order to exhibit $F_{2, \mu}$, we apply Theorem \ref{thm2} for $D_{z}^{\beta-\delta,\mu;p;q;\lambda;m}\{z^{\beta-1}(1-z)^{-\alpha-n}\}$ and substitute the extended hypergeometric function $F_{\mu}$ by its series representation, we obtain

\begin{eqnarray*}
&&  D_{z}^{\beta-\delta,\mu;p;q;\lambda;m}\{z^{\beta-1}(1-z)^{-\alpha}F_{\mu}(\alpha,\gamma;\tau;\frac{x}{1-z};q;\lambda;p;m)\} \\
  &&= \frac{z^{\delta-1}}{\Gamma(\delta-\beta)}{B(\beta+\frac{1}{2},\delta-\beta+\frac{1}{2})}\sum_{n,k=0}^{\infty}(\alpha)_{n+k}\frac{B_{\mu}(\gamma+n,\tau-\gamma;q;\lambda;p;m)}{B(\gamma,\tau-\gamma)} \times \frac{B_{\mu}(\beta+k+\frac{1}{2},\delta-\beta+\frac{1}{2};q;\lambda;p;m)}{B(\beta+\frac{1}{2},\delta-\beta+\frac{1}{2})}\frac{x^{n}z^{k}}{n!z!} \\
 &&= \frac{z^{\delta-1}}{\Gamma(\delta-\beta)}{B(\beta+\frac{1}{2},\delta-\beta+\frac{1}{2})}F_{2,\mu}(\alpha,\gamma,\beta+\frac{1}{2},\tau;\delta+1;x,z;q;\lambda;p;m).
\end{eqnarray*}
This completes the proof.
\end{proof}

\begin{proposition}[Mellin transform] The following expression holds true
\begin{eqnarray}
{\cal{M}}\{D_z^{\delta,\mu,p;q;\lambda;m}z^{\beta},p\to s\}&=&2^{s-1}z^{\beta-\delta}{\frac{1}{\sqrt{\pi}}}B\left(\beta+m(s+\frac{1}{2})+1,-\delta+m(s+\frac{1}{2})\right)\nonumber\\
&&\times\Gamma\left(\frac{s-\mu}{2}\right)\Gamma\left(\frac{s+\mu+1}{2}\right)
\Phi\left(\lambda,\frac{s+\mu+1}{2},q\right),
\end{eqnarray}
for $\Re(\mu)\geq 0$, $m>0$ and  $\Re(s)>\max\left\{\Re(\mu),-\frac{1}{2}-\frac{1}{m}-\frac{\Re(\beta)}{m},\frac{\Re(\delta)}{m}-\frac{1}{2}\right\rbrace $.
\end{proposition}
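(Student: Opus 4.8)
The plan is to reduce the statement to two results already in hand: the closed evaluation of $D_z^{\delta,\mu;p;q;\lambda;m}\{z^{\beta}\}$ furnished by Lemma \ref{lem1}, and the Mellin transform of the generalized extended beta function established in Proposition \ref{proMelin}. First I would rewrite, using Lemma \ref{lem1},
\[
D_z^{\delta,\mu;p;q;\lambda;m}\{z^{\beta}\}=\frac{z^{\beta-\delta}}{\Gamma(-\delta)}\,B_{\mu}\left(\beta+\tfrac{3}{2},-\delta+\tfrac{1}{2};q;\lambda;p;m\right),
\]
so that all of the $p$-dependence is confined to the extended beta function. Because the Mellin transform in $p$ is linear and the prefactor $z^{\beta-\delta}/\Gamma(-\delta)$ does not involve $p$, it factors out, leaving
\[
\mathcal{M}\{D_z^{\delta,\mu;p;q;\lambda;m}\{z^{\beta}\},p\to s\}=\frac{z^{\beta-\delta}}{\Gamma(-\delta)}\,\mathcal{M}\left\{B_{\mu}\left(\beta+\tfrac{3}{2},-\delta+\tfrac{1}{2};q;\lambda;p;m\right),p\to s\right\}.
\]

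The bulk of the remaining work is to apply Proposition \ref{proMelin} with the specialisation $x=\beta+\tfrac{3}{2}$ and $y=-\delta+\tfrac{1}{2}$ and then to simplify the two arguments of the ordinary beta factor. Using $\tfrac{3}{2}+\tfrac{m-1}{2}=\tfrac{m}{2}+1$ gives $x+ms+\tfrac{m-1}{2}=\beta+m(s+\tfrac{1}{2})+1$, and using $\tfrac{1}{2}+\tfrac{m-1}{2}=\tfrac{m}{2}$ gives $y+ms+\tfrac{m-1}{2}=-\delta+m(s+\tfrac{1}{2})$. These are exactly the two entries of $B(\cdot,\cdot)$ in the statement, while the two gamma factors and the Lerch factor $\Phi(\lambda,\tfrac{s+\mu+1}{2},q)$ transcribe verbatim; combining the constant $2^{s-1}/\sqrt{\pi}$ with the prefactor $z^{\beta-\delta}$ yields the claimed expression.

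Finally I would verify that the convergence strip of Proposition \ref{proMelin} collapses to the one in the statement under this specialisation. Substituting $x=\beta+\tfrac{3}{2}$ into $-\tfrac{1}{2}+\tfrac{1}{2m}-\tfrac{\Re(x)}{m}$ yields $-\tfrac{1}{2}-\tfrac{1}{m}-\tfrac{\Re(\beta)}{m}$, and substituting $y=-\delta+\tfrac{1}{2}$ into $-\tfrac{1}{2}+\tfrac{1}{2m}-\tfrac{\Re(y)}{m}$ yields $\tfrac{\Re(\delta)}{m}-\tfrac{1}{2}$, which reproduces two of the three thresholds; the term $-1-\Re(\mu)$ appearing in Proposition \ref{proMelin} is dominated by $\Re(\mu)$ whenever $\Re(\mu)\geq -\tfrac{1}{2}$, hence under the standing hypothesis $\Re(\mu)\geq 0$ of Definition \ref{definito} only $\Re(\mu)$ survives in the maximum.

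No genuine analytic difficulty arises: the argument is a direct composition of Lemma \ref{lem1} with Proposition \ref{proMelin}, and the only real care is the algebra of the beta-function arguments and of the convergence thresholds. I would, however, flag that the factor $1/\Gamma(-\delta)$ produced by Lemma \ref{lem1} must be carried through, so that the right-hand side of the statement should be understood as multiplied by $1/\Gamma(-\delta)$.
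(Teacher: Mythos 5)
Your proposal is correct and follows essentially the same route as the paper: reduce via Lemma \ref{lem1} to the Mellin transform of $B_{\mu}\left(\beta+\tfrac{3}{2},-\delta+\tfrac{1}{2};q;\lambda;p;m\right)$, then apply Proposition \ref{proMelin}, with exactly the specialisation of arguments and convergence thresholds you describe. Your closing flag is also justified: the paper's own proof carries the prefactor $\frac{z^{\beta-\delta}}{\Gamma(-\delta)}$ into its final step, so the factor $\frac{1}{\Gamma(-\delta)}$ should indeed appear on the right-hand side of the stated formula, and its absence there is an omission in the paper's statement rather than a defect in your argument.
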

\begin{proof}
We can prove this result  by applying  Mellin transform and using Lemma \ref{lem1}.
\begin{eqnarray*}
{\cal{M}}\{D_z^{\delta,\mu,p;q;\lambda;m}z^{\beta},p\to s \} &=& \frac{1}{{\Gamma(-\delta)}}\int_{0}^{\infty}p^{s-1} z^{\beta-\delta}B_{\mu}(\beta+\frac{3}{2},-\delta+\frac{1}{2};p;q;\lambda;m)dp\\
&=&\frac{z^{\beta-\delta}}{{\Gamma(-\delta)}}\int_{0}^{\infty}p^{s-1} B_{\mu}(\beta+\frac{3}{2},-\delta+\frac{1}{2};p;q;\lambda;m)dp
\end{eqnarray*}
As the last integral is the Mellin transform of $B_{\mu}(\beta+\frac{3}{2},-\delta+\frac{1}{2};p;q;\lambda;m),$  the result immediately follows via Proposition \ref{proMelin}.
\end{proof}
\begin{proposition} The following expression holds true
    \begin{eqnarray}
{\cal{M}}\{D_z^{\delta,\mu,p;q;\lambda;m}(1-z)^{-\beta},p\to s\}&=&2^{s-1}z^{-\delta}\frac{1}{\sqrt{\pi}}B\left(m(s+\frac{1}{2})+1,-\delta +m(s+\frac{1}{2})\right)\Gamma\left(\frac{s-\mu}{2}\right)\Gamma\left(\frac{s+\mu+1}{2}\right)\nonumber\\
&&\times\Phi\left(\lambda,\frac{s+\mu+1}{2},q\right) {}_2F_1 (\beta, m(s+\frac{1}{2})+1;-\delta + m(2s+1)+1; z),
\end{eqnarray}
where $\Re(\mu)\geq 0$, $\Re(\delta)<0$, $m>0$, $|z|<1$, $\Re(s)>\max\left\{\Re(\mu),-\frac{1}{2}+\frac{1}{m},\frac{\delta}{m}-\frac{1}{2}\right\rbrace $ and ${}_2F_1 $ is  the well-known Gauss hypergeometric function.
\end{proposition}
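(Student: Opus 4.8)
The plan is to reduce everything to the monomial case settled in the preceding Proposition (the Mellin transform of $D_z^{\delta,\mu,p;q;\lambda;m}\{z^{\beta}\}$) by expanding $(1-z)^{-\beta}$ in its binomial series. Since $\Re(\delta)<0$ we stay in the fractional–integral regime of Definition \ref{definito}, so no $d^n/dz^n$ corrections intervene. First I would write, for $|z|<1$,
\[
(1-z)^{-\beta}=\sum_{n=0}^{\infty}(\beta)_n\frac{z^n}{n!},
\]
and invoke Lemma \ref{lem2} to move the operator inside the sum term by term, obtaining
\[
D_z^{\delta,\mu,p;q;\lambda;m}\{(1-z)^{-\beta}\}=\sum_{n=0}^{\infty}\frac{(\beta)_n}{n!}\,D_z^{\delta,\mu,p;q;\lambda;m}\{z^{n}\}.
\]

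Next I would apply the Mellin transform $p\to s$, interchange it with the summation, and for each $n$ substitute the formula of the preceding Proposition with $\beta$ replaced by $n$,
\[
{\cal{M}}\{D_z^{\delta,\mu,p;q;\lambda;m}z^{n},p\to s\}=2^{s-1}z^{n-\delta}\frac{1}{\sqrt{\pi}}B\!\left(n+m(s+\tfrac12)+1,\,-\delta+m(s+\tfrac12)\right)\Gamma\!\left(\tfrac{s-\mu}{2}\right)\Gamma\!\left(\tfrac{s+\mu+1}{2}\right)\Phi\!\left(\lambda,\tfrac{s+\mu+1}{2},q\right).
\]
Pulling out every factor independent of $n$ (the prefactor $2^{s-1}z^{-\delta}/\sqrt{\pi}$, the two Gamma factors and the Lerch function $\Phi$), I am left with the series
\[
\sum_{n=0}^{\infty}(\beta)_n\,B\!\left(n+A,C\right)\frac{z^n}{n!},\qquad A:=m(s+\tfrac12)+1,\quad C:=-\delta+m(s+\tfrac12).
\]
The concluding algebraic step is to recognize this as a Gauss hypergeometric function: writing $B(n+A,C)=\Gamma(n+A)\Gamma(C)/\Gamma(n+A+C)$ and using $\Gamma(n+A)/\Gamma(A)=(A)_n$ together with $\Gamma(n+A+C)/\Gamma(A+C)=(A+C)_n$ gives
\[
B(n+A,C)=B(A,C)\,\frac{(A)_n}{(A+C)_n},
\]
so that the series collapses to $B(A,C)\,{}_2F_1(\beta,A;A+C;z)$. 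Since $A+C=-\delta+m(2s+1)+1$, this reproduces exactly the claimed ${}_2F_1\big(\beta,\,m(s+\tfrac12)+1;\,-\delta+m(2s+1)+1;\,z\big)$ together with the Beta factor $B\big(m(s+\tfrac12)+1,\,-\delta+m(s+\tfrac12)\big)$, completing the identification with the right-hand side.

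I expect the only delicate point to be the justification of the two interchanges: the operator with the summation, which is already furnished by Lemma \ref{lem2} through uniform convergence on the convergence disk, and the Mellin transform with the summation. The latter is controlled by the hypotheses $|z|<1$, $\Re(\delta)<0$ and the stated lower bound $\Re(s)>\max\{\Re(\mu),-\tfrac12+\tfrac1m,\tfrac{\delta}{m}-\tfrac12\}$, which guarantee absolute convergence of each monomial Mellin integral (via Proposition \ref{proMelin} and Lemma \ref{lema1}) and of the resulting series, the ratio of consecutive Beta factors tending to $1$ so that the ${}_2F_1$ converges for $|z|<1$; a dominated–convergence or Tonelli argument then legitimizes swapping integration and summation. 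Everything else is the routine Pochhammer bookkeeping indicated above.
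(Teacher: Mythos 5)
Your proposal is correct and follows exactly the same route as the paper: binomial expansion of $(1-z)^{-\beta}$, term-by-term application of the operator and the Mellin transform via the preceding monomial Proposition, then collapsing the series $\sum_{n\geq 0}(\beta)_n B\bigl(n+m(s+\tfrac12)+1,\,-\delta+m(s+\tfrac12)\bigr)\tfrac{z^n}{n!}$ into $B(A,C)\,{}_2F_1(\beta,A;A+C;z)$. In fact your write-up is slightly more complete than the paper's, which asserts the final hypergeometric identification without the Pochhammer computation $B(n+A,C)=B(A,C)(A)_n/(A+C)_n$ that you supply.
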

\begin{proof}
    The result can be proved using the binomial Theorem for $(1-z)^{-\alpha}$ and the Mellin transform of the general term. Indeed,
\begin{eqnarray*}
 && {\cal{M}}\{D_{z}^{\delta,\mu;p;q;\lambda;m}\{(1-z)^{-\alpha}\},p\rightarrow s\}= {\cal{M}}\{D_{z}^{\delta,\mu;p;q;\lambda;m}\left\lbrace \sum_{n=0}^{\infty}(\alpha)_{n}\frac{z^{n}}{n!}\right\rbrace, p\rightarrow s\}   =\sum_{n=0}^{\infty}\frac{(\alpha)_{n}}{n!}{\cal{M}}\{D_{z}^{\delta,\mu;p;q;\lambda;m}{z^{n}},p\rightarrow s\} \}\\
  &&= \sum_{n=0}^{\infty}\frac{(\alpha)_{n}}{n!} 2^{s-1}z^{n-\delta}{\frac{1}{\sqrt{\pi}}}B\left(n +m(s+\frac{1}{2})+1,-\delta +m(s+\frac{1}{2})\right)\Gamma\left(\frac{s-\mu}{2}\right)\Gamma\left(\frac{s+\mu +1}{2}\right)\Phi\left(\lambda,\frac{s+\mu +1}{2},q\right).\\
  &&=2^{s-1}z^{-\delta}{\frac{1}{\sqrt{\pi}}}\Gamma\left(\frac{s-\mu}{2}\right)\Gamma\left(\frac{s+\mu +1}{2}\right)\Phi\left(\lambda,\frac{s+\mu +1}{2},q\right)\sum_{n=0}^{\infty}\frac{(\alpha)_{n}}{n!}B\left(n +m(s+\frac{1}{2})+1,-\delta +m(s+\frac{1}{2})\right)z^n\\
  &&=2^{s-1}z^{-\delta}\frac{1}{\sqrt{\pi}}B\left(m(s+\frac{1}{2})+1,-\delta +m(s+\frac{1}{2})\right)\Gamma\left(\frac{s-\mu}{2}\right)\Gamma\left(\frac{s+\mu+1}{2}\right)\Phi\left(\lambda,\frac{s+\mu+1}{2},q\right)\nonumber\\
&&\times {}_2F_1 (\beta, m(s+\frac{1}{2})+1;-\delta + m(2s+1)+1; z).
\end{eqnarray*}
\end{proof}

\section{Generating function involving the extended generalized Gauss hypergeometric function }
In this section, we establish  some generating functions for the generalized Gauss hypergeometric functions.

\begin{theorem}\label{thm6} Let $\Re(\beta)>0$ and $\Re(\gamma)>\Re(\alpha)>-\frac{1}{2}$. Then we have
\begin{equation}\label{aquea6}
  \sum_{n=0}^{\infty}\frac{(\beta)_{n}}{n!}F_{\mu}(\beta+n,\alpha+\frac{1}{2};\gamma+1;z;q;p;\lambda;m)t^{n}
  =(1-t)^{-\beta}F_{\mu}\left(\beta,\alpha+\frac{1}{2};\gamma+1;\frac{z}{1-t};q;p;\lambda;m\right),
  \end{equation}
where $|z|<\min\{1,|1-t|\}$.
\end{theorem}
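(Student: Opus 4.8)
The plan is to establish the identity by expanding the left-hand side through the series definition of the extended Gauss hypergeometric function given in Definition \ref{Defin}, collapsing the Pochhammer symbols, performing the inner summation by the binomial series, and recognizing the right-hand side. Before any manipulation I would note that the hypotheses $\Re(\gamma)>\Re(\alpha)>-\frac{1}{2}$ give $\Re(\alpha+\frac{1}{2})>0$ and $\Re(\gamma-\alpha+\frac{1}{2})>0$, so that the classical beta factor $B(\alpha+\frac{1}{2},\gamma-\alpha+\frac{1}{2})$ and every generalized extended beta $B_\mu(\alpha+\frac{1}{2}+k,\gamma-\alpha+\frac{1}{2};q;\lambda;p;m)$ occurring below are well defined, and the two occurrences of $F_\mu$ in the statement meet the requirement $\Re(\gamma+1)>\Re(\alpha+\frac{1}{2})>0$ of Definition \ref{Defin}.

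First I would insert the series for $F_\mu$ with $a=\beta+n$, $b=\alpha+\frac{1}{2}$, $c=\gamma+1$ (so that $c-b=\gamma-\alpha+\frac{1}{2}$), obtaining the double series
\begin{equation*}
\text{LHS}=\sum_{n=0}^{\infty}\sum_{k=0}^{\infty}\frac{(\beta)_n}{n!}\,(\beta+n)_k\,\frac{B_\mu(\alpha+\frac{1}{2}+k,\gamma-\alpha+\frac{1}{2};q;\lambda;p;m)}{B(\alpha+\frac{1}{2},\gamma-\alpha+\frac{1}{2})}\,\frac{z^k}{k!}\,t^n.
\end{equation*}
The algebraic key is the Pochhammer identity $(\beta)_n(\beta+n)_k=(\beta)_{n+k}$, which merges the two rising factorials. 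After this I would interchange the two summations, hold $k$ fixed, and carry out the sum over $n$ first.

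The inner sum factors cleanly: writing $(\beta)_{n+k}=(\beta)_k(\beta+k)_n$ and invoking the binomial series $\sum_{n\geq0}(a)_n t^n/n!=(1-t)^{-a}$ with $a=\beta+k$ gives $\sum_{n}(\beta)_{n+k}t^n/n!=(\beta)_k(1-t)^{-\beta-k}$. Substituting this and extracting the common factor $(1-t)^{-\beta}$ leaves
\begin{equation*}
\text{LHS}=(1-t)^{-\beta}\sum_{k=0}^{\infty}(\beta)_k\,\frac{B_\mu(\alpha+\frac{1}{2}+k,\gamma-\alpha+\frac{1}{2};q;\lambda;p;m)}{B(\alpha+\frac{1}{2},\gamma-\alpha+\frac{1}{2})}\,\frac{1}{k!}\left(\frac{z}{1-t}\right)^k,
\end{equation*}
which by Definition \ref{Defin} (now with $a=\beta$, $b=\alpha+\frac{1}{2}$, $c=\gamma+1$ and argument $z/(1-t)$) is precisely $(1-t)^{-\beta}F_\mu(\beta,\alpha+\frac{1}{2};\gamma+1;\frac{z}{1-t};q;p;\lambda;m)$, the right-hand side of \eqref{aquea6}.

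The one delicate point, and the main obstacle, is justifying the term-by-term reordering and the interchange of the two summations; every other step is routine. This rests on absolute convergence of the double series, and it is here that the constraint $|z|<\min\{1,|1-t|\}$ enters. The bound $|z|<1$ controls the original hypergeometric series, the bound $|z|<|1-t|$ makes $|z/(1-t)|<1$ so the final $k$-series converges, and the binomial expansion of $(1-t)^{-\beta-k}$ is legitimate for $|t|<1$. Since the extended beta factors are uniformly dominated in $k$ under the stated real-part conditions, these bounds let Fubini–Tonelli secure absolute convergence throughout the region, and the identity follows.
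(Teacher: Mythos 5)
Your proof is correct, but it follows a genuinely different route from the paper's. The paper obtains \eqref{aquea6} as an application of the fractional-derivative calculus of Section 5: starting from the elementary identity $(1-z)^{-\beta}\bigl(1-\tfrac{t}{1-z}\bigr)^{-\beta}=(1-t)^{-\beta}\bigl(1-\tfrac{z}{1-t}\bigr)^{-\beta}$, it expands the left side in powers of $t$, multiplies through by $z^{\alpha-1}$, applies the operator $D_z^{\alpha-\gamma,\mu;p;q;\lambda;m}$ term by term, and then invokes Theorem \ref{thm2} to convert each $D_z^{\alpha-\gamma,\mu;p;q;\lambda;m}\{z^{\alpha-1}(1-z)^{-\beta-n}\}$ into $F_\mu(\beta+n,\alpha+\frac{1}{2};\gamma+1;z;\cdot)$ and the right-hand side into $(1-t)^{-\beta}F_\mu\bigl(\beta,\alpha+\frac{1}{2};\gamma+1;\frac{z}{1-t};\cdot\bigr)$. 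You bypass that machinery entirely and argue directly from Definition \ref{Defin}: insert the $B_\mu$-series for each $F_\mu(\beta+n,\ldots)$, merge the Pochhammer symbols via $(\beta)_n(\beta+n)_k=(\beta)_{n+k}=(\beta)_k(\beta+k)_n$, sum over $n$ with the binomial series, and reassemble the remaining $k$-series as the right-hand side. Your route is more elementary and self-contained, and your justification of the interchange (the ratios $B_\mu(\alpha+\frac{1}{2}+k,\gamma-\alpha+\frac{1}{2};\cdot)/B(\alpha+\frac{1}{2},\gamma-\alpha+\frac{1}{2})$ are uniformly bounded in $k$, since $u^k\le 1$ on $[0,1]$ inside the defining integral) is more explicit than the paper's one-line appeal to uniform convergence; what it does not deliver is the paper's real purpose in Section 6, namely exhibiting the operator $D_z^{\delta,\mu;p;q;\lambda;m}$ as a working tool for deriving generating relations. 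One caveat, which applies equally to the paper's proof: genuine absolute convergence of the double series, which Fubini requires, amounts to $\sum_{n,k}\frac{(|\beta|)_{n+k}}{n!\,k!}|z|^k|t|^n<\infty$, i.e.\ essentially $|z|+|t|<1$; this is stronger than the stated region $|z|<\min\{1,|1-t|\}$ whenever $t\notin[0,1)$, so on the rest of that region the identity should be completed by analytic continuation in $(z,t)$ --- a step worth one explicit sentence in your write-up, and one the paper omits as well.
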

\begin{proof} By considering the following elementary identity
\begin{equation*}
(1-z)^{-\beta}\left(1-\frac{t}{1-z}\right)^{-\beta}=(1-t)^{-\beta}\left(1-\frac{z}{1-t}\right)^{-\beta}
\end{equation*}
and expanding its left-hand side to give
\begin{equation}\label{series1}
    (1-z)^{-\beta}\sum_{n=0}^{\infty}\frac{(\beta)_{n}}{n!}\left(\frac{t}{1-z}\right)^n=(1-t)^{-\beta}\left(1-\frac{z}{1-t}\right)^{-\beta},\;\; \text{for }|t|<|1-z|.
\end{equation}
Multiplying both sides of \eqref{series1} by $z^{\alpha-1}$ and applying the extended Riemann-Liouville fractional derivative operator $D^{\alpha-\gamma;\mu;q;p;\lambda;m}$, we find
\begin{equation*}
D^{\alpha-\gamma;\mu;q;p;\lambda;m}\left\{\sum_{n=0}^{\infty}\frac{(\beta)_{n}t^{n}}{n!}
z^{\alpha-1}(1-z)^{-\beta-n}\right\}=D^{\alpha-\gamma;\mu;q;p;\lambda;m}\{(1-t)^{-\beta}z^{\alpha-1}(1-\frac{z}{1-t})^{-\beta}\}.
\end{equation*}
Uniform convergence of the involved series allows us to permute the summation and fractional derivative operator to get
\begin{equation}\label{series2}
\sum_{n=0}^{\infty}\frac{(\beta)_{n}}{n!}D^{\alpha-\gamma;\mu;q;p;\lambda;m}\{z^{\alpha-1}(1-z)^{-\beta-n}\}t^{n}=(1-t)^{-\beta}D^{\alpha-\gamma;\mu;q;p;\lambda;m}\{z^{\alpha-1}(1-\frac{z}{1-t})^{-\beta}\}.
\end{equation}
The result  easily follows using Theorem \ref{thm2}. 
\end{proof}
\begin{theorem}\label{thm7}Let $\Re(\beta)>0$, $\Re(\tau)>0$ and $\Re(\gamma)>\Re(\alpha)>-\frac{1}{2}$. Then we have
\begin{equation*}
    \sum_{n=0}^{\infty}\frac{(\beta)_{n}}{n!}F_{\mu}(\beta-n,\alpha+\frac{1}{2};\gamma+1;z;q;p;\lambda;m)t^{n}=(1-t)^{-\beta}
    F_{1,\mu}(\alpha+\frac{1}{2},\tau,\beta;\gamma+1;z;\frac{-zt}{1-t};q;p;\lambda;m),
\end{equation*}
where $|z|<1,$ $|t|<|1-z|$ and $|z||t|<|1-t|$.
\end{theorem}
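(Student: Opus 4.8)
The plan is to prove this exactly as Theorem \ref{thm6} was proved: turn the generating series into the action of the extended operator $D_z^{\alpha-\gamma,\mu;p;q;\lambda;m}$ on a \emph{single} elementary function, then evaluate that action once by Theorem \ref{thm2} and once by Theorem \ref{thm3}, and match the two evaluations. The only new ingredient is choosing the correct elementary identity to play the role of $(1-z)^{-\beta}(1-\tfrac{t}{1-z})^{-\beta}=(1-t)^{-\beta}(1-\tfrac{z}{1-t})^{-\beta}$ from Theorem \ref{thm6}.

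First I would fix that identity. Since $F_\mu(\beta-n,\alpha+\tfrac12;\gamma+1;z)$ is, up to the fixed prefactor $\tfrac{z^{\gamma-1}}{\Gamma(\gamma-\alpha)}B(\alpha+\tfrac12,\gamma-\alpha+\tfrac12)$, the image of $z^{\alpha-1}(1-z)^{-(\beta-n)}$ under $D_z^{\alpha-\gamma,\mu;p;q;\lambda;m}$ (this is Theorem \ref{thm2}), the generating series is built from the sum $\sum_{n}\tfrac{(\beta)_n}{n!}(1-z)^{-(\beta-n)}t^n$. Using the binomial series $(1-w)^{-\beta}=\sum_{n\ge0}\tfrac{(\beta)_n}{n!}w^n$ with $w=(1-z)t$, and then the factorisation $1-(1-z)t=(1-t)\bigl(1-\tfrac{-zt}{1-t}\bigr)$, I obtain
\begin{equation*}
\sum_{n=0}^{\infty}\frac{(\beta)_n}{n!}(1-z)^{-(\beta-n)}t^n=(1-z)^{-\beta}\bigl(1-(1-z)t\bigr)^{-\beta}=(1-t)^{-\beta}(1-z)^{-\beta}\Bigl(1-\tfrac{-zt}{1-t}\Bigr)^{-\beta},
\end{equation*}
valid for $|(1-z)t|<1$.

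Next I would multiply this identity by $z^{\alpha-1}$ and apply $D_z^{\alpha-\gamma,\mu;p;q;\lambda;m}$ to both sides. On the left, interchanging the operator with the summation (permissible by uniform convergence on compact subsets of the admissible region, exactly as in Theorem \ref{thm6}) and applying Theorem \ref{thm2} termwise reproduces $\tfrac{z^{\gamma-1}}{\Gamma(\gamma-\alpha)}B(\alpha+\tfrac12,\gamma-\alpha+\tfrac12)$ times the left-hand side of the asserted relation. On the right, the operator is applied to $(1-t)^{-\beta}z^{\alpha-1}(1-z)^{-\beta}\bigl(1-\tfrac{-zt}{1-t}\bigr)^{-\beta}$, which is precisely the situation of Theorem \ref{thm3} with $a=1$ and $b=\tfrac{-t}{1-t}$; it yields
\begin{equation*}
(1-t)^{-\beta}\frac{z^{\gamma-1}}{\Gamma(\gamma-\alpha)}B\Bigl(\alpha+\tfrac12,\gamma-\alpha+\tfrac12\Bigr)F_{1,\mu}\Bigl(\alpha+\tfrac12,\beta,\beta;\gamma+1;z,\tfrac{-zt}{1-t};q;\lambda;p;m\Bigr).
\end{equation*}
Cancelling the common prefactor $\tfrac{z^{\gamma-1}}{\Gamma(\gamma-\alpha)}B(\alpha+\tfrac12,\gamma-\alpha+\tfrac12)$ from both evaluations delivers the generating relation.

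The main obstacle is twofold. The routine-but-essential point is the justification of the termwise action of $D_z^{\alpha-\gamma,\mu;p;q;\lambda;m}$: one must verify uniform convergence of $\sum_n\tfrac{(\beta)_n}{n!}z^{\alpha-1}(1-z)^{-(\beta-n)}t^n$ on compacta of the region cut out by $|z|<1$, $|(1-z)t|<1$, and $|zt|<|1-t|$ (the last condition ensuring the argument $\tfrac{-zt}{1-t}$ of $F_{1,\mu}$ lies in the unit disc), so that the integral operator may be passed under the sum. The more delicate point is the matching of parameters: the factorisation $1-(1-z)t=(1-t)\bigl(1-\tfrac{-zt}{1-t}\bigr)$ forces \emph{both} upper indices of $F_{1,\mu}$ to equal $\beta$, so the symbol $\tau$ in the statement is pinned to the value $\beta$ (a genuinely independent $\tau$ does not survive the identity); this is the step I would scrutinise most carefully when reconciling my derivation with the stated form.
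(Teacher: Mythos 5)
Your argument is internally consistent, but it proves strictly less than the theorem, and your diagnosis of the parameter mismatch points the wrong way. You are right that the statement as printed is inconsistent: with $F_\mu(\beta-n,\dots)$ on the left, the left side does not involve $\tau$ while the right side does. The paper's own proof, however, resolves this in the opposite direction from yours: the first argument on the left should read $\tau-n$, not $\beta-n$, and $\tau$ remains a genuinely free parameter (which is why the hypothesis $\Re(\tau)>0$ is in the statement at all). The paper expands $[1-(1-z)t]^{-\beta}=\sum_{n\geq 0}\tfrac{(\beta)_n}{n!}(1-z)^n t^n$, keeping \emph{positive} powers of $(1-z)$, and then multiplies by $z^{\alpha-1}(1-z)^{-\tau}$ --- not by $z^{\alpha-1}$ alone --- before applying $D_z^{\alpha-\gamma,\mu;p;q;\lambda;m}$. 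Termwise, Theorem \ref{thm2} then gives $D_z^{\alpha-\gamma,\mu;p;q;\lambda;m}\{z^{\alpha-1}(1-z)^{-(\tau-n)}\}\propto F_\mu(\tau-n,\alpha+\tfrac12;\gamma+1;z;\dots)$ on the left, while Theorem \ref{thm3} applied to $z^{\alpha-1}(1-z)^{-\tau}\bigl(1-\tfrac{-t}{1-t}z\bigr)^{-\beta}$ gives $F_{1,\mu}(\alpha+\tfrac12,\tau,\beta;\gamma+1;z,\tfrac{-zt}{1-t};\dots)$ with the two indices $\tau$ and $\beta$ independent. (In the classical limit this is easily checked with the Euler integral: under the kernel $u^{a-1}(1-u)^{c-a-1}$ one has $\sum_n\tfrac{(\beta)_n}{n!}(1-zu)^{n-\tau}t^n=(1-t)^{-\beta}(1-zu)^{-\tau}\bigl(1-\tfrac{-zt}{1-t}u\bigr)^{-\beta}$, which integrates to $(1-t)^{-\beta}F_1(a,\tau,\beta;c;z,\tfrac{-zt}{1-t})$.)

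By absorbing the binomial factor as $(1-z)^{-(\beta-n)}$ you force the weight attached to the variable $z$ inside $F_{1,\mu}$ to equal $\beta$, so what you actually prove is only the special case $\tau=\beta$. The missing idea is small but essential: in your own setup, multiply your elementary identity by $z^{\alpha-1}(1-z)^{\beta-\tau}$ rather than by $z^{\alpha-1}$; then the left-hand terms become $z^{\alpha-1}(1-z)^{-(\tau-n)}$, the right-hand function becomes $z^{\alpha-1}(1-z)^{-\tau}\bigl(1-\tfrac{-t}{1-t}z\bigr)^{-\beta}$, and your two-sided evaluation via Theorems \ref{thm2} and \ref{thm3} reproduces the paper's proof verbatim, yielding the corrected statement $\sum_{n\geq0}\tfrac{(\beta)_n}{n!}F_\mu(\tau-n,\alpha+\tfrac12;\gamma+1;z;q;\lambda;p;m)t^n=(1-t)^{-\beta}F_{1,\mu}(\alpha+\tfrac12,\tau,\beta;\gamma+1;z,\tfrac{-zt}{1-t};q;\lambda;p;m)$. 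In short: correct computation, correct detection of an inconsistency, but the wrong repair --- the typo sits in the first argument of $F_\mu$ on the left-hand side, not in the freedom of $\tau$, and your version discards a parameter that the theorem (and its role as a bilinear-type generating relation, cf.\ Theorem \ref{thm8}) is designed to carry.
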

\begin{proof} By considering the following identity
\begin{equation*}
    [1-(1-z)t]^{-\beta}=(1-t)^{-\beta}(1+\frac{zt}{1-t})^{-\beta},
\end{equation*}
and expanding its left-hand side as  power series, we get
\begin{equation*}
\sum_{n=0}^{\infty}\frac{(\beta)_{n}}{n!}(1-z)^{n}t^{n}=(1-t)^{-\beta}(1-\frac{-zt}{1-t})^{-\beta},\;\; \text{for }|t|<|1-z|.
\end{equation*}
Multiplying both sides by $z^{\alpha-1}(1-z)^{-\tau}$ and applying the definition of the extended Riemann-Liouville fractional derivative operator $D_{z}^{\alpha-\gamma;\mu;q;p;\lambda;m}$ on both sides, we find
\begin{eqnarray*}
   D_{z}^{\alpha-\gamma;\mu;q;p;\lambda;m}\left\{\sum_{n=0}^{\infty}\frac{(\beta)_{n}}{n!}z^{\alpha-1}(1-z)^{-\tau}(1-z)^{n}t^{n}\right\}
   =D_{z}^{\alpha-\gamma;\mu;q;p;\lambda;m}\left\{(1-t)^{-\beta}z^{\alpha-1}(1-z)^{-\tau}(1-\frac{-zt}{1-t})^{-\beta}\right\}.
\end{eqnarray*}
Interchanging the order of the summation and fractional derivative under the given conditions, we obtain
\begin{equation*}
\sum_{n=0}^{\infty}\frac{(\beta)_{n}}{n!}D^{\alpha-\gamma;\mu;q;p;\lambda;m}\{z^{\alpha-1}(1-z)^{-\tau+n}\}t^{n}=(1-t)^{-\beta}D^{\alpha-\gamma;\mu;q;p;\lambda;m}\left\{z^{\alpha-1}(1-z)^{-\tau}(1-\frac{z}{1-t})^{-\beta}\right\}.
 \end{equation*}
Finally, the desired result follows by Theorems \ref{thm2} and \ref{thm3}.
\end{proof}
\begin{theorem}\label{thm8}Let $\Re(\xi)>\Re(\upsilon)>-\frac{1}{2}$, $\Re(\gamma)>\Re(\alpha)>-\frac{1}{2}$ and $\Re(\beta)>0$. Then we have
\begin{eqnarray*}
  &&\sum_{n=0}^{\infty}\frac{(\beta)_{n}}{n!}F_{\mu}(\beta+n,\alpha+\frac{1}{2};\gamma+1;z;q;\lambda;p;m)F_{\mu}(-n,\upsilon+\frac{1}{2};\xi+1;u;q;\lambda;p;m)t^{n} \\
  &=&(1-t)^{-\beta}F_{2,\mu}\left(\beta,\alpha+\frac{1}{2},\upsilon+\frac{1}{2};\gamma+1,\xi+1;\frac{z}{1-t},\frac{-ut}{1-t};q;\lambda;p;m\right).
  \end{eqnarray*}
where $|z|<1$, $|\frac{1-u}{1-z}t|<1$ and $ |\frac{z}{1-t}|+|\frac{ut}{1-t}|<1.$
\end{theorem}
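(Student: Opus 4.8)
The plan is to reduce the bilinear sum to the single-variable generating relation of Theorem~\ref{thm6} by first linearizing the terminating factor and then reindexing. First I would expand $F_{\mu}(-n,\upsilon+\frac{1}{2};\xi+1;u;q;\lambda;p;m)$ by its defining series \eqref{gbeta2}; since $(-n)_{k}=0$ for $k>n$ this is a finite sum over $0\le k\le n$, and using $(-n)_{k}=(-1)^{k}n!/(n-k)!$ together with the prefactor $(\beta)_{n}/n!$ produces the clean weight $(\beta)_{n}(-1)^{k}/\bigl(k!\,(n-k)!\bigr)$. Substituting this into the left-hand side turns it into a double sum over $n$ and $k$ with $0\le k\le n$, carrying the variable factors $u^{k}t^{n}$ and the factor $F_{\mu}(\beta+n,\alpha+\frac{1}{2};\gamma+1;z;q;\lambda;p;m)$.

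Next I would interchange the order of summation and set $n=k+l$ with $l\ge 0$. Writing $(\beta)_{k+l}=(\beta)_{k}(\beta+k)_{l}$ and pulling the $k$-dependent pieces (the beta-function ratio in $k$ and $(ut)^{k}$) outside, the inner sum over $l$ becomes exactly
\[
\sum_{l=0}^{\infty}\frac{(\beta+k)_{l}}{l!}F_{\mu}\!\left(\beta+k+l,\alpha+\tfrac{1}{2};\gamma+1;z;q;\lambda;p;m\right)t^{l},
\]
which is the left-hand side of Theorem~\ref{thm6} with $\beta$ replaced by $\beta+k$. Applying that theorem replaces it by $(1-t)^{-\beta-k}F_{\mu}\!\left(\beta+k,\alpha+\frac{1}{2};\gamma+1;\frac{z}{1-t};q;\lambda;p;m\right)$, and after absorbing $(-1)^{k}(ut)^{k}(1-t)^{-k}=\bigl(\tfrac{-ut}{1-t}\bigr)^{k}$ the whole expression collapses to $(1-t)^{-\beta}$ times a single sum over $k$.

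To finish, I would expand this remaining $F_{\mu}\!\left(\beta+k,\alpha+\frac{1}{2};\gamma+1;\frac{z}{1-t}\right)$ by \eqref{gbeta2}, producing a summation index $j$ with weight $(\beta+k)_{j}$ and variable $\bigl(\tfrac{z}{1-t}\bigr)^{j}$. The identity $(\beta)_{k}(\beta+k)_{j}=(\beta)_{j+k}$ then merges the two Pochhammer symbols, and the two beta-function ratios are precisely the coefficients appearing in the definition \eqref{gbeta5} of $F_{2,\mu}$ with parameters $\bigl(\beta,\alpha+\frac{1}{2},\upsilon+\frac{1}{2};\gamma+1,\xi+1\bigr)$ and arguments $\frac{z}{1-t}$, $\frac{-ut}{1-t}$. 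This yields the claimed right-hand side.

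The routine algebra (the two Pochhammer identities and the reindexing $n=k+l$) is straightforward; the one point needing care is the legitimacy of interchanging the summations and of applying Theorem~\ref{thm6} termwise. This is exactly where the hypotheses enter: the condition $\bigl|\frac{1-u}{1-z}t\bigr|<1$ controls absolute convergence of the original bilinear series in $t$ (the factor $F_{\mu}(-n,\cdots;u)$ grows like $(1-u)^{n}$ while $F_{\mu}(\beta+n,\cdots;z)$ grows like $(1-z)^{-n}$), whereas $\bigl|\frac{z}{1-t}\bigr|+\bigl|\frac{ut}{1-t}\bigr|<1$ is the convergence region of the resulting $F_{2,\mu}$; together with $|z|<1$ they justify every rearrangement. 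I expect this convergence bookkeeping, rather than the combinatorics, to be the main obstacle.
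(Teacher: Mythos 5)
Your proof is correct, but it takes a genuinely different route from the paper's. The paper stays inside its operator framework: it substitutes $t\to(1-u)t$ in Theorem~\ref{thm6}, multiplies by $u^{\upsilon-1}$, and applies the generalized extended fractional derivative $D_{u}^{\upsilon-\xi,\mu;p;q;\lambda;m}$ in the auxiliary variable $u$; the left-hand side is then recognized via Theorem~\ref{thm2} (which converts $D_{u}^{\upsilon-\xi}\{u^{\upsilon-1}(1-u)^{n}\}$ into the terminating $F_{\mu}(-n,\upsilon+\frac{1}{2};\xi+1;u;\cdots)$), and the right-hand side via Theorem~\ref{thm5} (which produces the $F_{2,\mu}$). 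You instead work purely at the level of series: expand the terminating factor by \eqref{gbeta2}, use $(-n)_{k}=(-1)^{k}n!/(n-k)!$, reindex $n=k+l$, apply Theorem~\ref{thm6} termwise with $\beta\to\beta+k$ (legitimate since $\Re(\beta+k)>0$ and $|z|<\min\{1,|1-t|\}$ follows from the stated hypotheses), and reassemble the double series into \eqref{gbeta5} via $(\beta)_{k}(\beta+k)_{j}=(\beta)_{j+k}$ -- I checked the bookkeeping and it lands exactly on $F_{2,\mu}\bigl(\beta,\alpha+\frac{1}{2},\upsilon+\frac{1}{2};\gamma+1,\xi+1;\frac{z}{1-t},\frac{-ut}{1-t}\bigr)$. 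What the paper's route buys is uniformity (all three generating relations of the section follow the same operator-on-identity template) and reuse of already-proved results, with the combinatorics hidden inside Theorems~\ref{thm2} and~\ref{thm5}; what your route buys is self-containedness and elementarity -- you need only Theorem~\ref{thm6} and the definitions, you never introduce a fractional derivative in the second variable $u$, and in effect you reprove the relevant case of Theorem~\ref{thm5} inline. Your convergence discussion is heuristic (the growth estimates for the two $F_{\mu}$ factors are plausible rather than proved), but it is no less rigorous than the paper's own treatment, which also interchanges summation and the fractional derivative "under the given conditions" without further justification.
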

\begin{proof} By replacing $t$ by $(1-u)t$ in \eqref{aquea6}  and multiplying both sides of the resulting identity by $u^{\upsilon-1},$ we get
\begin{eqnarray}\label{series4}
 &&\sum_{n=0}^{\infty}\frac{(\beta)_{n}}{n!}F_{\mu}(\beta+n,\alpha+\frac{1}{2};\gamma+1;z;q;\lambda;p;m)u^{\upsilon-1}(1-u)^{n}t^{n} \nonumber\\
 &=&u^{\upsilon-1}[1-(1-u)t]^{-\beta}F_{\mu}\left(\beta,\alpha+\frac{1}{2};\gamma+1;\frac{z}{1-(1-u)t};q;\lambda;p;m\right),
\end{eqnarray}
where $\mathfrak{R}(\beta)>0$ and $\mathfrak{R}(\gamma)>\mathfrak{R}(\alpha)>-\frac{1}{2}$.

Next, applying the fractional derivative $D^{\upsilon-\xi,\mu;q;\lambda;p;m}$ to both sides of \eqref{series4} and changing the order of the summation and the fractional derivative under conditions $|z|<1$, $|\frac{1-u}{1-z}t|<1$ and $ |\frac{z}{1-t}|+|\frac{ut}{1-t}|<1$, yields
\begin{eqnarray*}
  &&\sum_{n=0}^{\infty}\frac{(\beta)_{n}}{n!}F_{\mu}(\beta+n,\alpha+\frac{1}{2};\gamma+1;z;q;\lambda;p;m)D^{\upsilon-\xi,\mu;q;\lambda;p;m}\{u^{\upsilon-1}(1-u)^{n}\}t^{n} \\
  &=&D^{\upsilon-\xi,\mu;q;\lambda;p;m}\left\{u^{\upsilon-1}[1-(1-u)t]^{-\beta}F_{\mu}\left(\beta,\alpha+\frac{1}{2};\gamma+1;\frac{z}{1-(1-u)t};q;\lambda;p;m\right)\right\},
\end{eqnarray*}

The last identity can be written as follows:
\begin{eqnarray*}
&&\sum_{n=0}^{\infty}\frac{(\beta)_{n}}{n!}F_{\mu}(\beta+n,\alpha+\frac{1}{2};\gamma+1;z;q;\lambda;p;m)D^{\upsilon-\xi,\mu;q;\lambda;p;m}\{u^{\upsilon-1}(1-u)^{n}\}t^{n}   \\
&=&(1-t)^{-\beta}D^{\upsilon-\xi,\mu;q;\lambda;p;m}\left\{u^{\upsilon-1}\left[1-\frac{-ut}{1-t}\right]^{-\beta}F_{\mu}\left(\beta+n,\alpha+\frac{1}{2};\gamma+1;\frac{\frac{z}{1-t}}{1-\frac{-ut}{1-t}};q;\lambda;p;m\right)\right\}.
\end{eqnarray*}
Thus, by using  Theorems \ref{thm2} and \ref{thm5} in the resulting identity, we obtain the desired result.
\end{proof}


\end{document}